\documentclass[11pt]{amsart}

\usepackage{amsfonts,amssymb,amsmath, amsthm}
\usepackage{graphicx}
\usepackage{systeme}
\usepackage{pgf,tikz,pgfplots}
\usepackage{kotex}
\usepackage[margin=1in]{geometry}

\pgfplotsset{compat=1.15}
\usepgfplotslibrary{fillbetween}
\usepackage{mathrsfs}
\usepackage{comment}
\usetikzlibrary{arrows}
\usetikzlibrary{calc}
\newtheorem{theorem}{Theorem}[section]
\newtheorem{lemma}{Lemma}[section]

\newtheorem{proposition}{Proposition}[section]
\newtheorem{remark}{Remark}[section]

\theoremstyle{definition}

\theoremstyle{remark}

\numberwithin{equation}{section}







\newcommand{\R}{\mathbb{R}}



\newcommand{\sX}{\dot{\bold{X}}}
\newcommand{\bB}{\bold{B}}
\newcommand{\bG}{\bold{G}}
\newcommand{\bS}{\bold{S}}
\newcommand{\bD}{\bold{D}}
\newcommand{\bX}{\bold{X}}

\newcommand{\tv}{\widetilde{v}}

\newcommand{\tu}{\widetilde{u}}


\newcommand{\e}{\varepsilon}
\newcommand{\pa}{\partial}

\newcommand{\bth}{\bar{\theta}}
\def\di{\displaystyle}





\usepackage{float}
\restylefloat{table}
\restylefloat{figure}
\usepackage{dsfont}

\title[Navier-Stokes-Fourier equations]{Time-asymptotic stability of composite wave of viscous shocks and viscous contact wave for Navier-Stokes-Fourier equations}

\author[Huang]{Xushan Huang}
\address[Xushan Huang]{\newline Department of Mathematical Sciences \newline Korea Advanced Institute of Science and Technology, Daejeon  34141, Republic of Korea}
\email{xushanhuang@kaist.ac.kr}

\author[Lee]{Hobin Lee}
\address[Hobin Lee]{\newline Department of Mathematical Sciences \newline Korea Advanced Institute of Science and Technology, Daejeon  34141, Republic of Korea}
\email{lcuh11@kaist.ac.kr}

\subjclass[2020]{35Q35, 76N06} 

\keywords{compressible Navier-Stokes-Fourier equations, shock, contact discontinuity, asymptotic stability, composite wave, $a$-contraction with shift}

\thanks{\textbf{Acknowledgment.} 
The authors thank Professor Moon-Jin Kang for the beneficial discussions.
The work of X. Huang was supported by the National Research Foundation of Korea (NRF) grant funded by the Korea government (MSIT) (No. RS-2019-NR040050). H. Lee was partially supported by the National Research Foundation of Korea (RS-2024-00361663 and NRF-2019R1A5A1028324)}





\begin{document}

\begin{abstract} 
This work focuses on the long-time nonlinear stability of a composite wave pattern comprising two viscous shocks and a viscous contact wave for the one-dimensional compressible Navier-Stokes-Fourier (NSF) system. Specifically, we establish that if the composite wave strength and the perturbations are sufficiently small, the NSF system admits a unique global-in-time strong solution, which converges uniformly in space as time tends to infinity, towards the corresponding composite wave, up to dynamical shifts in the positions of the two viscous shocks. Notably, the strengths of the two viscous shocks can be chosen independently. Our proof relies upon the $a$-contraction method with time-dependent shifts and suitable weight functions.
\end{abstract}

\maketitle
	\tableofcontents
\section{Introduction}
We study the one-dimensional compressible Navier–Stokes–Fourier (NSF) system formulated in Lagrangian coordinates
\begin{equation}\label{eq:NSF}
	\left\{\begin{aligned}
		&v_t-u_x=0, \quad  && x\in \mathbb{R},\quad  t>0 \\
		&u_t+p(v,\theta)_x=\left(\mu\frac{u_x}{v}\right)_x,   \\
		& E_t + \left(p(v,\theta)u\right)_x= \left(\kappa \frac{\theta_x}{v}\right)_x + \left(\mu\frac{uu_x}{v}\right)_x,
	\end{aligned}\right.
\end{equation}
In the model, $v = v(t,x)$, $u = u(t,x)$, and $\theta = \theta(t,x)$ denote the fluid’s specific volume, velocity, and absolute temperature, respectively. The total energy is defined by $E = e + \frac{u^2}{2}$, where $e$ represents the internal energy.
For an ideal polytropic gas, the pressure $p$ and internal energy $e$ are given by
$$p(v, \theta) = \frac{R\theta}{v},\qquad e(v, \theta) = \frac{R}{\gamma -1} \theta + constant,$$
where $R > 0$ and $\gamma > 1$ are physical constants characteristic of the gas.
The constants $\mu$ and $\kappa$ denote the viscosity and the heat-conductivity coefficients, respectively.

The system \eqref{eq:NSF} is supplied with initial data $(v_0, u_0, \theta_0)$ connecting the prescribed far-field constant states:
\begin{align}\label{eq:farfield}
	\lim_{x\to \pm{\infty}} (v_0(x),u_0(x),\theta_0(x)) = (v_{\pm}, u_{\pm}, \theta_{\pm}).
\end{align}

It is well understood that the asymptotic behavior of solutions 
$(v, u, \theta)$ to the NSF equations \eqref{eq:NSF} as $t\rightarrow \infty$ is fundamentally governed by the Riemann problem of the corresponding full compressible Euler equations:

\begin{equation}\label{eq:CE}
	\left\{\begin{aligned}
		&v_t-u_x=0, \quad  && x\in \mathbb{R}, \quad t>0, \\
		&u_t+p(v,\theta)_x=0,   \\
		& E_t + \left(p(v,\theta)u\right)_x= 0,
	\end{aligned}\right.
\end{equation}
with the Riemann-type initial data
\begin{equation}\label{CEinitial}
	(v(0,x), u(0,x), \theta(0,x)) =
	\left\{\begin{array}{ll}
		(v_-, u_-,\theta_-), &\di x<0,\\
		(v_+, u_+,\theta_+), &\di x>0.
	\end{array}\right.
\end{equation}
The system \eqref{eq:CE} is a strictly hyperbolic conservation law with three distinct real eigenvalues
$$\lambda_1  = -\sqrt{\frac{\gamma p}{v}} < 0, \quad \lambda_2 = 0, \quad \lambda_3 =-\lambda_1> 0.$$
The characteristic fields associated with $\lambda_1$ and $\lambda_3$ are genuinely nonlinear and thus give rise to either shock wave or rarefaction wave, whereas the field corresponding to $\lambda_2$ is linearly degenerate and therefore generates a contact discontinuity. Consequently, the self-similar solution of the Riemann problem, commonly referred to as the Riemann solution, is composed of these three elementary wave patterns and their admissible combinations (see, e.g., \cite{D79}).
The viscous counterparts of these Riemann waves subsequently determine the large-time behavior of solutions to the Cauchy problem \eqref{eq:NSF}–\eqref{eq:farfield}.


We are interested in the case where the end states $(v_{\pm}, u_{\pm}, \theta_{\pm}) \in \mathbb{R}_+ \times \mathbb{R} \times \mathbb{R}_+$ of the Riemann problem consist of two shocks and a contact discontinuity. 
Specifically, there exist unique states $(v_*, u_*,\theta_*)$ and $(v^*, u^*,\theta^*) \in \mathbb{R}_+ \times \mathbb{R}\times \mathbb{R}_+$ such that the left state $(v_-, u_-, \theta_-)$ is first connected to $(v_*, u_*,\theta_*)$ by a $1$-shock. 
From $(v_*, u_*,\theta_*)$, one then moves along the second contact discontinuity curve to reach $(v^*, u^*,\theta^*) \in CD_2(v^*, u^*,\theta^*)$. 
Finally, the state $(v^*, u^*,\theta^*)$ is connected to the right state $(v_+, u_+, \theta_+)$ through a $3$-shock.

If $(v_l, u_l, \theta_l) \in S_1(v_r, u_r, \theta_r) \cup S_3(v_r, u_r, \theta_r)$, 
then the left and right states are connected by a shock satisfying the Rankine--Hugoniot conditions
\begin{align}
	\begin{aligned}
		& -\sigma (v_r- v_l) - (u_r -u_l) =0,  \\
		& -\sigma (u_r -u_l) - (p(v_r, \theta_r) - p(v_l, \theta_l)) =0, \\
		& -\sigma (E_r -E_l) + (p(v_r, \theta_r)u_r - p(v_l, \theta_l)u_l)=0,
	\end{aligned}
\end{align}
which define two admissible shock curves in the state space.
More precisely, the $1$-shock curve $S_1(v_r, u_r, \theta_r)$ corresponds to $v_l>v_r$ with
\[
\sigma=-\sqrt{-\frac{p(v_r, \theta_r)-p(v_l, \theta_l)}{v_r-v_l}},
\]
while the admissible $3$-shock curve corresponds to $v_l<v_r$ with
\[
\sigma=\sqrt{-\frac{p(v_r, \theta_r)-p(v_l, \theta_l)}{v_r-v_l}}.
\]

In either case, the associated shock solution $(v^s,u^s,\theta^s)$ to \eqref{eq:CE}--\eqref{CEinitial}
connecting $(v_l,u_l,\theta_l)$ and $(v_r,u_r,\theta_r)$ is given by the discontinuous traveling wave
\begin{equation}\label{shocksolution}
	(v^s, u^s, \theta^s)(t,x)=
	\left\{\begin{array}{ll}
		(v_l, u_l,\theta_l), & x<\sigma t,\\
		(v_r, u_r,\theta_r), & x>\sigma t.
	\end{array}\right.
\end{equation}

The second characteristic field, associated with the eigenvalue $\lambda_2=0$, is linearly degenerate and gives rise to a $2$-contact discontinuity.
For a given right state $(v_r, u_r, \theta_r) \in \mathbb{R}_+ \times \mathbb{R} \times \mathbb{R}_+$, 
if the left state $(v_l, u_l, \theta_l)$ lies on the $2$-contact discontinuity curve
\begin{align}
	CD_2(v_r, u_r, \theta_r)
	:= \{(v,u,\theta)\mid u_l=u_r,\ p(v_l,\theta_l)=p(v_r,\theta_r)\},
\end{align}
then the unique contact discontinuity connecting
$(v_l, u_l, \theta_l)$ and $(v_r, u_r, \theta_r)$
is given by the piecewise constant solution
\begin{align}\label{CD}
	(v^d, u^d, \theta^d)(t,x)=
	\begin{cases}
		(v_l, u_l, \theta_l), & x<u_r t, \\
		(v_r, u_r, \theta_r), & x>u_r t,
	\end{cases}
\end{align}
which solves the inviscid system \eqref{eq:CE}--\eqref{CEinitial} with Riemann initial data.

The Riemann problem \eqref{eq:CE}-\eqref{CEinitial} admits a unique Riemann solution $(\hat{v}, \hat{u}, \hat{\theta})$, which is constructed by piecing together three elementary waves associated with the system’s characteristic fields.
\begin{align*}
	(\hat{v}, \hat{u}, \hat{\theta})= (v^s_1,u^s_1,\theta^s_1)(t,x) + (v^d,u^d,\theta^d)(t,x) + (v^s_3,u^s_3,\theta^s_3)(t,x) - (v_*, u_*,\theta_*) - (v^*, u^*,\theta^*).
\end{align*}

The viscous counterpart of the Riemann solution $(\bar{v},\bar{u},\bar{\theta})$ is given by the composite wave:
\begin{align}
\begin{aligned}\label{composite wave}
	(\bar{v},\bar{u},\bar{\theta})=& (\tilde{v}_1,\tilde{u}_1,\tilde{\theta}_1)(x-\sigma_1t) + (v^D,u^D,\theta^D)(t,x) \\
    &+ (\tilde{v}_3,\tilde{u}_3,\tilde{\theta}_3)(x-\sigma_3t)- (v_*, u_*,\theta_*) - (v^*, u^*,\theta^*),
\end{aligned}
\end{align}
which is composed of a $1$-viscous shock $(\tilde{v}_1,\tilde{u}_1,\tilde{\theta}_1)(x-\sigma_1t)$, a $3$-viscous shock $(\tilde{v}_3,\tilde{u}_3,\tilde{\theta}_3)(x-\sigma_3t)$, and the viscous contact wave $(v^D,u^D,\theta^D)(t,x)$. 

For each $i = 1,3$, we denote by $(\tilde{v}_i, \tilde{u}_i, \tilde{\theta}_i)(x - \sigma_i t)$ the viscous shock wave, which is a traveling wave solution of \eqref{eq:NSF} satisfying
\begin{equation}\label{eq:VS}
	\begin{cases}
		&-\sigma_i(\tilde{v}_i)'-(\tilde{u}_i)'=0,\\
		&-\sigma_i(\tilde{u}_i)'+(\tilde{p}_i)'=\left(\mu \frac{(\tilde{u}_i)'}{\tilde{v}_i}\right)', \\
		& -\sigma_i(\tilde{E}_i)' + (\tilde{p}_i\tilde{u}_i)' = \left(\kappa\frac{(\tilde{\theta}_i)'}{\tilde{v}_i}\right)' + \left(\mu \frac{\tilde{u}_i(\tilde{u}_i)'}{\tilde{v}_i}\right)', \\
		& (\tilde{v}_1, \tilde{u}_1, \tilde{\theta}_1)(-\infty)=(v_-, u_-,\theta_-), \quad (\tilde{v}_1, \tilde{u}_1, \tilde{\theta}_1)(+\infty)=(v_*, u_*,\theta_*),  \\
		& (\tilde{v}_3, \tilde{u}_3, \tilde{\theta}_3)(-\infty)=(v^*, u^*,\theta^*), \quad (\tilde{v}_3, \tilde{u}_3, \tilde{\theta}_3)(+\infty)=(v_+, u_+,\theta_+), 
	\end{cases}
\end{equation}
where $\tilde{E}_i:= \frac{R}{\gamma -1} \tilde{\theta}_i + \frac{(\tilde{u}_i)^2}{2}$ and $\tilde{p}_i := p(\tilde{v}_i,\tilde{\theta}_i)$.

The viscous analogue of the inviscid contact discontinuity connecting
$(v_*, u_*, \theta_*)$ and $(v^*, u^*, \theta^*)$
is the viscous contact wave $(v^D, u^D, \theta^D)(t,x)$.
Following \cite{HXY08}, it is given by
\begin{align}\label{VCD}
	\begin{aligned}
		v^D(t,x)&= \frac{R\Theta^{sim}}{p_*}, \\
		u^D(t,x)&= u_* + \frac{(\gamma-1)\kappa\,\Theta_x^{sim}}{R\gamma \Theta^{sim}}, \\
		\theta^D(t,x)&= \Theta^{sim},
	\end{aligned}
\end{align}
where $\Theta^{sim}=\Theta^{sim}\!\left(\frac{x}{\sqrt{1+t}}\right)$
is the unique self-similar solution of
\begin{equation*}
	\begin{cases}
		\Theta_t = \dfrac{(\gamma-1)\kappa p_*}{R^2\gamma}
		\left(\dfrac{\Theta_x}{\Theta}\right)_x, \\[0.3em]
		\Theta(t,-\infty)=\theta_*, \qquad \Theta(t,+\infty)=\theta^*.
	\end{cases}
\end{equation*}

The long-time asymptotic behavior of solutions to the Navier–Stokes (NS) or NSF systems is fundamentally characterized by three elementary viscous wave patterns: viscous shock wave, viscous contact wave, and rarefaction wave.
The stability theory for viscous shocks has been developed through a substantial body of work, ranging from early foundational analyses to later advances addressing zero-mass constraints, spectral conditions, and degenerate or generalized viscosity mechanisms \cite{MN85, G86, Liu85, SX93, LZ15, MZ04, MW10, VY16, BD06}.
For the linearly degenerate characteristic field, while inviscid contact discontinuities are known to be unstable, their viscous counterparts admit diffusive profiles whose asymptotic stability has been rigorously established for both artificially regularized systems and the physical NSF equations \cite{X96, LX97, HXY08, HLM10}.
The stability of rarefaction waves has likewise been well documented, beginning with classical results for the NS equations and subsequently extended to the full NSF system \cite{MN86, MN92, LX88, NYZ04}.
Collectively, these contributions provide a comprehensive and robust theoretical framework for the stability of the three fundamental viscous wave patterns in compressible fluid dynamics.

 Building upon these foundational results, research attention has gradually shifted toward the stability of composite wave patterns, where several elementary waves interact dynamically. While the aforementioned works focus on individual viscous shock, rarefaction wave, or contact discontinuity, studies on their superposition are relatively scarce.

 In this direction, Huang and Matsumura \cite{HM09} studied the asymptotic stability of superposed viscous shock waves for the NSF system, while Huang, Li, and Matsumura \cite{HLM10} established the stability of composite waves consisting of a viscous contact wave and rarefaction waves.
 The interaction between a viscous shock and a rarefaction wave is more delicate \cite{M18, KVW23, KVW-NSF}, since the anti-derivative method for viscous shocks is incompatible with the energy method for rarefaction waves.
 This difficulty was overcome by Kang, Vasseur, and Wang \cite{KVW23}, who introduced the $a$-contraction method with shift to prove the stability of a viscous shock–rarefaction composite wave for the NS equations.
 Their approach was later extended in \cite{KVW-NSF} to the full NSF system, yielding the stability of general Riemann solutions composed of a viscous shock, a rarefaction wave, and a viscous contact wave.
 More recently, Han, Kang, and Kim \cite{HKK23} applied the $a$-contraction method to configurations involving two viscous shocks with independent small amplitudes, thereby removing the comparability assumption required in \cite{HM09}.


 In this paper, we establish the nonlinear time-asymptotic stability of the composite wave
 $(\bar{v}, \bar{u}, \bar{\theta})$ for the one-dimensional NSF equations
 \eqref{eq:NSF}--\eqref{eq:farfield},
 which consists of a shifted $1$-viscous shock, a $2$-viscous contact discontinuity,
 and a shifted $3$-viscous shock,
 under smallness assumptions on both the wave strength and the perturbation.
\\

\subsection{Main result}
We state the main theorem on the global existence and asymptotic behavior of the Cauchy problem for the NSF equations.

\begin{theorem}\label{main theorem}
	Let $(v_+,u_+,\theta_+) \in \mathbb{R}_+ \times \mathbb{R} \times \mathbb{R}_+$ be a given constant state. Then there exist positive constants $\delta_0$ and $\varepsilon_0$ such that the following assertions hold.
	
	Assume that the states $(v_-,u_-,\theta_-)$, $(v_*,u_*,\theta_*)$, and $(v^*,u^*,\theta^*)$ satisfy
	\[(v_-,u_-,\theta_-) \in S_1(v_*,u_*,\theta_*),\quad  (v_*,u_*,\theta_*)\in CD_2(v^*,u^*,\theta^*), \quad  (v^*,u^*,\theta^*) \in S_3(v_+,u_+,\theta_+),\]
	and are sufficiently close in the sense that
		\[|v_--v_\ast|+ |v_*-v^*|+|v^*-v_+| \leq \delta_0.\]
Let $(\widetilde{v}_1, \widetilde{u}_1, \widetilde{\theta}_1)(x-\sigma_1 t)$ and $(\widetilde{v}_3, \widetilde{u}_3, \widetilde{\theta}_3)(x-\sigma_3 t)$ denote the $1$- and $3$-viscous shock profiles of \eqref{eq:VS} connecting $(v_-,u_-,\theta_-)$ to $(v_*,u_*,\theta_*)$ and $(v^*,u^*,\theta^*)$ to $(v_+,u_+,\theta_+)$, respectively.
Let $(v^D,u^D,\theta^D)(t,x)$ be the $2$-viscous contact wave defined by \eqref{VCD} joining $(v_*,u_*,\theta_*)$ and $(v^*,u^*,\theta^*)$.
		
Suppose that the initial data $(v_0,u_0,\theta_0)$ satisfy
	\[\sum_{\pm}\left(\|(v_0-v_{\pm}, u_0- u_{\pm}, \theta_0-\theta_{\pm}\|_{L^2(\mathbb{R_{\pm}})}\right)
+ \|(v_{0x}, u_{0x}, \theta_{0x})\|_{L^2(\mathbb{R})} \leq \varepsilon_0,\]
where $\mathbb{R_-} =:- \mathbb{R_+} =(-\infty, 0)$.

Then the compressible Navier–Stokes–Fourier system \eqref{eq:NSFS} (equivalently, \eqref{eq:NSF}) admits a unique global-in-time solution $(v,u,\theta)(t,x)$ for all $t > 0$.
Moreover, there exist absolutely continuous shift functions $\bold{X}_1(t)$ and $\bold{X}_3(t)$ such that
	\begin{align}
	\begin{aligned}
		& v(t,x) - \left(\tilde{v}_1(x-\sigma_1t -\bold{X}_1(t)) + v^D(\frac{x}{\sqrt{1+t}}) +\tilde{v}_3(x-\sigma_3t -\bold{X}_3(t)) -v_* -v^*\right) \in C\left(0,+\infty; H^1(\mathbb{R})\right), \\
		& u(t,x) - \left(\tilde{u}_1(x-\sigma_1t -\bold{X}_1(t)) + u^D(\frac{x}{\sqrt{1+t}}) +\tilde{u}_3(x-\sigma_3t -\bold{X}_3(t)) -u_* -u^*\right) \in C\left(0,+\infty; H^1(\mathbb{R})\right), \\
		& \theta(t,x) - \left(\tilde{\theta}_1(x-\sigma_1t -\bold{X}_1(t)) + \theta^D(\frac{x}{\sqrt{1+t}}) +\tilde{\theta}_3(x-\sigma_3t -\bold{X}_3(t))-\theta_* -\theta^* \right) \in C\left(0,+\infty; H^1(\mathbb{R})\right).
	\end{aligned}
\end{align}
In addition, the solution satisfies the large-time asymptotic behavior
	\begin{align}\label{est:longtime}
	\begin{aligned}
		\sup_{x\in \mathbb{R}}\bigg|(v,u,\theta)(t,x) - \bigg(& \tilde{v}_1(x-\sigma_1t -\bold{X}_1(t)) + v^D(\frac{x}{\sqrt{1+t}}) +\tilde{v}_3(x-\sigma_3t -\bold{X}_3(t)) -v_* -v^*,  \\
		& \tilde{u}_1(x-\sigma_1t -\bold{X}_1(t)) + u^D(\frac{x}{\sqrt{1+t}}) +\tilde{u}_3(x-\sigma_3t -\bold{X}_3(t)) -u_* -u^*, \\
		& \tilde{\theta}_1(x-\sigma_1t -\bold{X}_1(t)) + \theta^D(\frac{x}{\sqrt{1+t}}) +\tilde{\theta}_3(x-\sigma_3t -\bold{X}_3(t))-\theta_* -\theta^*\bigg)\bigg| \to 0,
	\end{aligned}
\end{align}
as $t \to \infty$.

Moreover, the shifts satisfy
	\begin{align}\label{shiftproperty}
	\lim_{t\to +\infty} |\dot{\bold{X}}_i(t)| =0,  \quad i =1,3.
\end{align}
together with the wave-separation property
	\begin{align}\label{est:waveseparation}
	\bold{X}_1 (t)+ \sigma_1t  \leq \frac{\sigma_1}{2}t <0 < \frac{\sigma_3}{2}t\le\bold{X}_3(t) +\sigma_3t , \quad t>0.
\end{align}
\end{theorem}

\begin{remark}
	Theorem \ref{main theorem} asserts that when the two far-field states $(v_{\pm}, u_{\pm}, \theta_{\pm})$ specified in \eqref{eq:farfield} are connected through a composite Riemann structure consisting of two shocks and a contact discontinuity, the corresponding solution of the compressible Navier–Stokes–Fourier system \eqref{eq:NSF} (or equivalently, \eqref{eq:NSFS}) exists globally and asymptotically approaches, as $t \to +\infty$, a composite viscous profile. This limiting profile is formed by the superposition of the two viscous shock waves, each modulated by a time-dependent shift $\bold{X}_i(t)$, $i=1,3$, and the viscous contact wave that bridges them.
\end{remark}

\begin{remark}
The shift functions $\bold{X}_i(t)$ $($defined in \eqref{def:shift}$)$ satisfy the asymptotic property stated in \eqref{shiftproperty}, from which it follows that
	\[\lim_{t\to \infty} \frac{\bold{X}_i(t)}{t} =0, \quad  \ i=1,3. \]
This relation indicates that the shifts evolve at most sublinearly in time, implying that the corresponding viscous shock profiles remain essentially stationary in shape when viewed in their comoving coordinates. Consequently, the long-time dynamics preserve the traveling wave structure of each viscous shock.
\end{remark}

\subsection{Main ideas}
The central analytical tool employed to establish the nonlinear stability result is the $a$-contraction method with shifts. 
This technique, introduced in \cite{KV16, LV11}, provides a robust framework for studying the stability of shock waves in inviscid conservation laws, including the Euler system \eqref{eq:CE}.
It was subsequently extended to viscous models; see, for instance,
\cite{KV17, K21, KVJEMS, KV22, KVW23, KVW-NSF}. Based on the relative entropy formulation \cite{D96}, the method seeks a suitable weight function $a$ and a time-dependent shift $\bold{X}(t)$ such that the weighted relative entropy is non-increasing in time:
\[
\frac{d}{dt}\int_{\mathbb R}
a(t,x-\bold{X}(t))\,\eta\!\left(U(t,x)\mid \bar U(x-\bold{X}(t))\right)\,dx \le 0.
\]
This inequality quantifies the contraction of solutions toward the reference shock profile.

As discussed in the introduction (see, for example, \cite{ KVW-NSF}), the $a$-contraction framework can be extended beyond single wave to establish the stability of generic composite wave patterns consisting of a viscous shock, a viscous contact wave, and a rarefaction wave. The present study consider the configuration described in \eqref{composite wave}, where two viscous shocks and one viscous contact wave are superposed. This three-wave composition introduces several new analytical difficulties. Unlike the case of a single viscous shock, the superposition of two viscous shocks and a viscous contact wave does not form an exact solution of the NSF system because of nonlinear coupling among the individual profiles. Consequently, one must carefully estimate the nonlinear interaction terms generated by the mutual influence of the $1$-shock, $2$-contact wave, and $3$-shock components. To manage these interactions, we employ two independent shift functions, $\bold{X}_1(t)$ and $\bold{X}_3(t)$, defined in \eqref{def:shift}, together with corresponding weight functions introduced in \eqref{def_a}. These functions allow us to localize perturbations near each viscous shock and to apply the single-wave $a$-contraction argument in a localized fashion. In addition, to control error terms that are spatially confined between the two shock layers, we introduce auxiliary cutoff functions, defined in terms of the shifts as in \eqref{def:cutoff}. The construction of these localization functions ensures that the two shock positions remain sufficiently separated over time, as guaranteed by the separation property \eqref{est:waveseparation}.\\

The paper is organized as follows.
Section \ref{preliminaries} collects preliminary results, including basic properties of viscous shock and contact waves and a Poincaré-type inequality.
Section \ref{a-priori} presents the analytical framework, including local well-posedness, the construction of shift functions, and the main a priori estimate in Proposition \ref{prop:main}, from which Theorem \ref{main theorem} follows via a continuation argument.
Sections \ref{relative estimate} and \ref{higher order} are devoted to the proof of Proposition \ref{prop:main}, where Section \ref{relative estimate} develops the $a$-contraction method and Section \ref{higher order} establishes the higher-order energy estimates.

\section{Preliminaries}\label{preliminaries}
\subsection{Viscous shock wave}

The following lemma summarizes basic properties of viscous shock profiles; see \cite{KVW-NSF, EEKO24} for details.


\begin{lemma}\label{lem:vs}
Let $(v_r,u_r,\theta_r)$ be a given right-end state. Then there exists a constant $C>0$ such that the following holds.
For any left-end state $(v_l,u_l,\theta_l)$ that is connected to $(v_r,u_r,\theta_r)$ through an $i$-shock curve $(i=1,3)$, there exists a unique traveling-wave solution $(\tilde{v}_i(x-\sigma_i t), \tilde{u}_i(x-\sigma_i t), \tilde{\theta}_i(x-\sigma_i t))$ of \eqref{eq:VS}.

Denote by $\delta_i$ the strength of the $i$-shock, defined by
 $\delta_i:=|u_r-u_l|\sim|v_r-v_l| \sim |\theta_r -\theta_l|$. 
 Then the solution $(\tilde{v}_i, \tilde{u}_i, \tilde{\theta}_i)$ satisfies the following properties:
	\begin{align}
		\begin{aligned}\label{est:shock_prop}
			&\widetilde{v}_1 '(x-\sigma_1t)<0, \quad \widetilde{u}_1 '(x-\sigma_1t)<0, \quad \widetilde{\theta}_1 '(x-\sigma_1t)>0, \quad \forall (x-\sigma_1t) \in \mathbb{R},  \\
			&\widetilde{v}_3 '(x-\sigma_3t)>0, \quad \widetilde{u}_3 '(x-\sigma_3t)<0, \quad \widetilde{\theta}_3 '(x-\sigma_3t)<0, \quad \forall  (x-\sigma_3t) \in \mathbb{R},  \\
			&|(\tv_i(x-\sigma_it)-v_{L}, \tu_i(x-\sigma_it)-u_l, \tilde{\theta}_i(x-\sigma_it)- \theta_l)|\le C\delta_i e^{-C\delta|x-\sigma_it|},\quad  x-\sigma_it<0,\\
			& |(\tv_i(x-\sigma_it)-v_{R}, \tu_i(x-\sigma_it)-u_r, \tilde{\theta}_i(x-\sigma_it)- \theta_r)|\le C\delta_i e^{-C\delta|x-\sigma_it|},\quad  x-\sigma_it>0, \\
			&|(\tv_i'(x-\sigma_it),\tu_i'(x-\sigma_it), \tilde{\theta}_i'(x-\sigma_it))|\le C\delta_i^2 e^{-C\delta_i|x-\sigma_it|},\quad \forall(x-\sigma_it)\in\R,\\
		& |(\tv_i''(x-\sigma_it),\tu_i''(x-\sigma_it),\tilde{\theta}_i''(x-\sigma_it))| \le C\delta_i|(\tv_i'(x-\sigma_it),\tu_i'(x-\sigma_it), \tilde{\theta}_i'(x-\sigma_it))|,\quad \forall(x-\sigma_it) \in\R.
		\end{aligned}
	\end{align}

Additionally, $|\tilde{v}_i'| \sim |\tilde{u}_i'| \sim |\tilde{\theta}_i'|$ for all $(x-\sigma_it) \in \mathbb{R}$, which mean exactly
		\begin{align} 
			\begin{aligned} \label{est:vu}
				|\tilde{u}_1' - \sigma_-\tilde{v}_1'| &\leq C\delta_1 |\tilde{v}_1'|, \quad \forall (x-\sigma_1t)\in \mathbb{R}, \\
				|\tilde{u}_3' + \sigma^\ast\tilde{v}_3'| &\leq C\delta_3 |\tilde{v}_3'|, \quad \forall (x-\sigma_3t)\in \mathbb{R},\\
			\end{aligned}
		\end{align}
		and 
		\begin{align}
			\begin{aligned} \label{est:vtheta}
				|\tilde{\theta}_1' + \frac{(\gamma -1)p_-}{R}\tilde{v}_1'| &\leq C\delta_1 |\tilde{v}_1'|, \quad \forall (x-\sigma_1t) \in \mathbb{R},\\
				|\tilde{\theta}_3' + \frac{(\gamma -1)p^\ast}{R}\tilde{v}_3'| &\leq C\delta_3 |\tilde{v}_3'|, \quad \forall (x-\sigma_3t) \in \mathbb{R},\\
			\end{aligned}
		\end{align}
		where 
		\begin{align*}
			&p_-:= p(v_-,\theta_-)= \frac{R\theta_-}{v_-}, \ p^\ast:= p(v^\ast,\theta^\ast)= \frac{R\theta^\ast}{v^\ast}, \ \text{and}\\
			&\sigma_-:= \sqrt{\frac{\gamma p_-}{v_-}}, \ \sigma^\ast:= \sqrt{\frac{\gamma p^\ast}{v^\ast}}.\\
		\end{align*}
Also, $\sigma_-$ and $\sigma^\ast$ above satisfy
		\begin{align}\label{est:sigma}
			|\sigma_1+\sigma_-|\leq C\delta_1 \ \text{\and} \ |\sigma_3 -\sigma^\ast|\leq C\delta_3.
		\end{align}     
\end{lemma} 
\begin{remark}
	In this paper, we adopt the notation 
	 $(v_l,u_l,\theta_l)= (v_-,u_-,\theta_-)$ and $(v_r,u_r,\theta_r)= (v_*,u_*,\theta_*)$ for the $1$-viscous shock, and $(v_l,u_l,\theta_l)= (v^*,u^*,\theta^*)$ and $(v_r,u_r,\theta_r)= (v_+,u_+,\theta_+)$ for the $3$-viscous shock in the subsequent analysis.
\end{remark}

\subsection{Viscous contact wave}

Like the viscous shock, there is a corresponding viscous part, called `viscous contact wave'. It can be explicitly constructed and has been proven to be time-asymptotically stable, both in the context of the artificial viscosity model \cite{LX97, X96} and the physical compressible Navier–Stokes equations \cite{HXY08, HLM10}.
\begin{lemma}\label{lem:vcd}
	$($\cite{HXY08}$)$ The viscous contact wave $(v^D,u^D,\theta^D)(t,x)$ defined in \eqref{VCD} satisfies
	\begin{align}\label{est:vcdproperty}
		\begin{aligned}
			& \left(v^D -v_*, u^D-u_*, \theta^D- \theta_*\right) = O(1) \delta_Ce^{-\frac{C_1x^2}{1+t}}, \quad \forall x<0; \\
			&  \left(v^D -v^*, u^D-u^*, \theta^D- \theta^*\right) = O(1) \delta_Ce^{-\frac{C_1x^2}{1+t}}, \quad \forall x>0;   \\
			& \left(\partial_x^nv^D, \partial_x^n\theta^D\right)(t,x) = O(1)\delta_C(1+t)^{-\frac{n}{2}}e^{-\frac{C_1x^2}{1+t}}, \quad \forall x\in \mathbb{R}, \quad n=1,2,\dots;\\
			& \partial_x^nu^D(t,x) = O(1)\delta_C(1+t)^{-\frac{1+n}{2}}e^{-\frac{C_1x^2}{1+t}}, \quad \forall x\in \mathbb{R}, \quad n=1,2,\dots;
		\end{aligned}
	\end{align}
	where $\delta_C = |v^* -v_*| \sim |u^*-u_*| \sim|\theta^* -\theta_*|$ is the amplitude of the viscous contact wave, and $C_1>0$ is generic constant.
\end{lemma}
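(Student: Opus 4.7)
The plan is to reduce the question to an ODE analysis of the self-similar temperature profile $\Theta^{sim}$, establish Gaussian-type decay of its derivatives, and then transfer these bounds to $(v^C,u^C,\theta^C)$ via their defining formulas in \eqref{VCD}. Since the statement is quoted from \cite{HXY08}, the aim is to describe the structure rather than reproduce the computations in full.

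First, I would pass to the self-similar variable $\xi = x/\sqrt{1+t}$. Writing $\Theta(t,x) = \Theta^{sim}(\xi)$ and computing $\Theta_t = -\tfrac{\xi}{2(1+t)}(\Theta^{sim})'$ and $(\Theta_x/\Theta)_x = (1+t)^{-1}\bigl((\Theta^{sim})'/\Theta^{sim}\bigr)'$, the nonlinear diffusion equation for $\Theta$ reduces to the second-order ODE
\begin{equation*}
-\tfrac{\xi}{2}(\Theta^{sim})'(\xi) = \frac{(\gamma-1)\kappa p_*}{R^2\gamma}\left(\frac{(\Theta^{sim})'}{\Theta^{sim}}\right)'(\xi),
\end{equation*}
with boundary conditions $\Theta^{sim}(-\infty)=\theta_*$ and $\Theta^{sim}(+\infty)=\theta^*$. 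A standard shooting and monotonicity argument then yields a unique smooth monotone solution whose total variation is $O(\delta_C)$ and which stays uniformly bounded away from zero.

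Second, to extract the Gaussian decay I would linearize around the midpoint $\bar\theta = \tfrac12(\theta_*+\theta^*)$: setting $\Theta^{sim} = \bar\theta + \delta_C \psi$, the equation for $\psi$ is an $O(\delta_C)$ perturbation of the linear ODE $\psi''(\xi) + a_* \xi\, \psi'(\xi) = 0$ with $a_* = R^2\gamma\bar\theta/(2(\gamma-1)\kappa p_*)$, whose relevant bounded solution is proportional to an error function. This yields $|\psi'(\xi)|\lesssim e^{-a_*\xi^2/2}$, hence $|(\Theta^{sim})'(\xi)|\lesssim \delta_C\, e^{-C_1 \xi^2}$. Differentiating the ODE and bootstrapping on $n$ then gives $|(\Theta^{sim})^{(n)}(\xi)|\lesssim \delta_C\, e^{-C_1 \xi^2}$ for each $n\ge 1$, and integrating $(\Theta^{sim})'$ from $\pm\infty$ produces $\Theta^{sim}(\xi)-\theta_* = O(\delta_C)\,e^{-C_1\xi^2}$ for $\xi<0$ and $\Theta^{sim}(\xi)-\theta^* = O(\delta_C)\,e^{-C_1\xi^2}$ for $\xi>0$.

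Finally, the bounds on $(v^C,u^C,\theta^C)$ follow by chain-rule bookkeeping: since $v^C = R\Theta^{sim}/p_*$ and $\theta^C = \Theta^{sim}$, each spatial derivative produces a factor $(1+t)^{-1/2}$, giving the rate $(1+t)^{-n/2} e^{-C_1 x^2/(1+t)}$; for $u^C$, the definition already carries one factor of $\Theta_x^{sim}\sim (1+t)^{-1/2}$, which accounts for the improved rate $(1+t)^{-(1+n)/2}$. The main obstacle I anticipate is keeping the decay constant $C_1$ uniform in the amplitude $\delta_C$: this is handled by linearizing about the \emph{fixed} midpoint $\bar\theta$ so that $a_*$ depends only on $(p_*,\kappa,\gamma)$, and absorbing the nonlinear correction by a Gronwall argument on the ODE for $(\Theta^{sim})'$ once $\delta_C$ is sufficiently small.
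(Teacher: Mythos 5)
The paper does not supply a proof of Lemma \ref{lem:vcd}; it is stated with a citation to \cite{HXY08}, so there is no internal argument to compare against. Your sketch is a plausible and essentially standard reconstruction of the cited proof: passing to the self-similar variable $\xi = x/\sqrt{1+t}$ to obtain the ODE $-\tfrac{\xi}{2}(\Theta^{sim})' = \tfrac{(\gamma-1)\kappa p_*}{R^2\gamma}\bigl((\Theta^{sim})'/\Theta^{sim}\bigr)'$, extracting the Gaussian decay from the structure of this equation, bootstrapping to higher derivatives, and transferring the bounds to $(v^C,u^C,\theta^C)$ by the chain rule (each $\partial_x$ costing a factor $(1+t)^{-1/2}$, with $u^C$ carrying one extra such factor from the $\Theta_x^{sim}$ in its definition). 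The constants $a_*$ and the reduced ODE you write are both correct.

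One remark on your concern about uniformity of $C_1$ in $\delta_C$: the linearization about the midpoint $\bar\theta$ is one valid way to handle this, but it is not essential. An equivalent and arguably cleaner route is to work directly with the first-order (Riccati-type) equation for $\phi := (\Theta^{sim})'$, namely
\begin{align*}
\phi' \;=\; -\frac{\Theta^{sim}\,\xi}{2c}\,\phi \;+\; \frac{\phi^2}{\Theta^{sim}}, \qquad c := \frac{(\gamma-1)\kappa p_*}{R^2\gamma},
\end{align*}
and use the uniform two-sided bounds $\min(\theta_*,\theta^*)\le\Theta^{sim}\le\max(\theta_*,\theta^*)$ together with $\phi = O(\delta_C)$ to conclude that the quadratic term is $O(\delta_C)\phi$ and is dominated by the linear drift for $|\xi|\gtrsim 1$; integrating the logarithmic derivative then gives $|\phi(\xi)|\lesssim\delta_C e^{-C_1\xi^2}$ with $C_1$ depending only on $\theta_\pm$, $\kappa$, $\gamma$, $p_*$. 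Differentiating this first-order equation and substituting back gives the bounds for all $(\Theta^{sim})^{(n)}$ by induction, which is what your ``bootstrap'' step needs. Either route yields the stated estimates; your sketch is sound.
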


The viscous wave $(v^D, u^D, \theta^D)(t,x)$ defined in \eqref{VCD} satisfies the system
\begin{equation}\label{eq:VCD}
	\begin{cases}
		v^D_t - u^D_x = 0, \\
		u^D_t + p^C_x = \mu \left(\frac{u^D_x}{v^D}\right)_x + Q_1^C, \\
		\frac{R}{\gamma - 1} \theta^D_t + p^C u^D_x = \kappa \left(\frac{\theta^D_x}{v^D}\right)_x + \mu \frac{(u^D_x)^2}{v^D} + Q_2^C,
	\end{cases}
\end{equation}
where $p^C = p(v^D, \theta^D)$ and the remainder terms satisfy
\begin{align}\label{est:QC}
	\begin{aligned}
		Q_1^C &= u^D_t - \mu \left(\frac{u^D_x}{v^D}\right)_x = O(1) , \delta_C (1+t)^{-\frac{3}{2}} e^{-\frac{2 C_1 x^2}{1+t}}, \\
		Q_2^C &= - \mu \frac{(u^D_x)^2}{v^D} = O(1) \delta_C (1+t)^{-2} e^{-\frac{2 C_1 x^2}{1+t}},
	\end{aligned}
\end{align}
as $|x| \to \infty$, according to Lemma \ref{lem:vcd}. Furthermore, from \eqref{VCD} and Lemma \ref{lem:vcd}, for any $p \geq 1$ it holds that
\begin{align*}
	\|(v^D,u^D,\theta^D)(t,\cdot)- (v^d,u^d,\theta^d)(t,\cdot)\|_{L^p(\mathbb{R})} = O(1)\kappa^{\frac{1}{2p}}(1+t)^{\frac{1}{2p}}, 
\end{align*}
In any finite time horizon, the viscous contact wave $(v^D, u^D, \theta^D)$ approaches the inviscid contact discontinuity $(v^d, u^d, \theta^d)$ in the $L^p$ sense as the heat conductivity $\kappa$ tends to zero. Nevertheless, this convergence does not persist uniformly for large times, where noticeable discrepancies between the two profiles may develop.

\subsection{Poincar\'e-type inequality}
We utilize effectively the following Poincar\'e-type inequality in the method of $a$-contraction with shift in the viscous cases.

\begin{lemma}[\cite{KV21}, Lemma 2.9]\label{Poincare}
	For any $f:[0,1]\rightarrow\mathbb{R}$ satisfying $\int_{0}^{1}y(1-y)|f'| dy <\infty$
	\[\int_{0}^1 \left|f-\int_{0}^1f dy\right|^2dy\le \frac{1}{2}\int_{0}^1 y(1-y)|f'|^2dy.\]
\end{lemma}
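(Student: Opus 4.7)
The plan is to reduce the inequality to a one-dimensional Cauchy--Schwarz computation combined with an explicit kernel evaluation. Set $\bar f := \int_0^1 f(y)\,dy$. First, I would invoke the symmetric variance identity
\begin{equation*}
\int_0^1 \bigl(f(y)-\bar f\bigr)^2\,dy \;=\; \frac{1}{2}\int_0^1\!\!\int_0^1 \bigl(f(y)-f(z)\bigr)^2\,dy\,dz,
\end{equation*}
which follows by expanding the square on the right and using $\int_0^1 (f-\bar f)\,dy = 0$.

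Next, for each fixed pair $(y,z)$, I would write the increment as $f(y)-f(z) = \int_z^y f'(s)\,ds$ (this makes sense since the hypothesis $\int_0^1 y(1-y)|f'|\,dy < \infty$ implies that $f$ is absolutely continuous on each compact subinterval of $(0,1)$) and then apply the Cauchy--Schwarz inequality to obtain
\begin{equation*}
\bigl(f(y)-f(z)\bigr)^2 \;\le\; |y-z|\int_{\min(y,z)}^{\max(y,z)} \bigl(f'(s)\bigr)^2\,ds.
\end{equation*}
Plugging this bound into the double integral and switching the order of integration by Fubini (the integrand is nonnegative) yields
\begin{equation*}
\int_0^1 \bigl(f(y)-\bar f\bigr)^2\,dy \;\le\; \frac{1}{2}\int_0^1 \bigl(f'(s)\bigr)^2\, K(s)\,ds,
\end{equation*}
where the kernel is
\begin{equation*}
K(s) \;:=\; \iint_{\{\,\min(y,z)\,<\,s\,<\,\max(y,z)\,\}} |y-z|\,dy\,dz.
\end{equation*}

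The final step is the explicit computation of $K(s)$. Using the symmetry $y\leftrightarrow z$, this reduces to $K(s) = 2\int_0^s\!\int_s^1 (z-y)\,dz\,dy$. A direct evaluation gives $K(s) = \tfrac{1}{3}\bigl(1-(1-s)^3-s^3\bigr)$, and expanding $(1-s)^3 = 1-3s+3s^2-s^3$ collapses this to the clean identity $K(s) = s(1-s)$, producing exactly the asserted inequality with the stated constant $\tfrac{1}{2}$.

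I do not anticipate any serious obstacle: the only subtle point is justifying Fubini and the absolute continuity of $f$ on compacts of $(0,1)$, both of which follow from the hypothesis. As a sanity check, the constant $\tfrac{1}{2}$ is in fact sharp: the test function $f(y) = y$ gives $\int_0^1 (y-\tfrac{1}{2})^2\,dy = \tfrac{1}{12}$ and $\tfrac{1}{2}\int_0^1 y(1-y)\,dy = \tfrac{1}{12}$, so equality is attained.
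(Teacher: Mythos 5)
Your proof is correct. The paper itself does not prove this lemma -- it simply cites \cite{KV21}, Lemma 2.9 -- so there is no in-text argument to compare against, but your route (the variance identity $\int_0^1(f-\bar f)^2 = \tfrac12\iint(f(y)-f(z))^2$, followed by Cauchy--Schwarz on the increment and the Fubini swap to produce the kernel $K(s)=\iint_{\min(y,z)<s<\max(y,z)}|y-z|\,dy\,dz = s(1-s)$) is the standard proof of this inequality, and your evaluation of $K$ and the sharpness check with $f(y)=y$ are both correct. One small observation: the hypothesis as written bounds $\int_0^1 y(1-y)|f'|\,dy$ rather than $\int_0^1 y(1-y)|f'|^2\,dy$; as you note, it serves only to make $f$ locally absolutely continuous on $(0,1)$ so that the increment representation is valid, and if the right-hand side of the inequality is infinite the statement is vacuous anyway.
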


\section{A priori estimate and proof of the main theorem}\label{a-priori}
\setcounter{equation}{0}
In this section, we derive a priori $H^1$-estimates for the perturbation of the solution around the composite wave.
These estimates form the basis for establishing the large-time convergence of the solution toward the composite wave profile.

For later convenience, we rewrite the system \eqref{eq:NSF} in a non-divergence form equivalent to the original one.
\begin{equation} \label{eq:NSFS}
	\begin{cases}
		&v_t-u_x=0, \\
		&u_t+p(v,\theta)_x=\left(\mu\frac{u_x}{v}\right)_x,    \\
		&\frac{R}{\gamma-1}\theta_t+p(v,\theta)u_x= \left(\kappa \frac{\theta_x}{v}\right)_x + \mu \frac{u_x^2}{v}.
	\end{cases}
\end{equation}

We will analyze the stability of solutions to \eqref{eq:NSFS} around a superposition wave, which consists of two viscous shock waves shifted by $\bold{X}_1(t) $ and $\bold{X}_3(t)$  (as defined in \eqref{def:shift}), along with a viscous contact wave:

\begin{equation*} 
	\begin{pmatrix}
		\bar{v}(t,x)\\
		\bar{u}(t,x)\\
		\bar{\theta}(t,x)
	\end{pmatrix}:=
	\begin{pmatrix}
		\widetilde{v}_1(x-\sigma_1t -\bold{X}_1(t)) + v^{C}(t,x) + \widetilde{v}_3(x-\sigma_3t -\bold{X}_3(t))-v_* -v^*\\
		\widetilde{u}_1(x-\sigma_1t -\bold{X}_1(t)) + u^{C}(t,x) + \widetilde{u}_3(x-\sigma_3t -\bold{X}_3(t))-u_*-u^*\\
		\widetilde{\theta}_1(x-\sigma_1t -\bold{X}_1(t)) + \theta^{C}(t,x) + \widetilde{\theta}_3(x-\sigma_3t -\bold{X}_3(t))-\theta_* -\theta^*\\
	\end{pmatrix}.
\end{equation*}
The composite wave $(\bar{v}, \bar{u}, \bar{\theta})(t,x)$ satisfies the system
\begin{equation}\label{eq:NSFCW}
	\begin{cases}
		&\bar{v}_t +\dot{\bold{X}}_1(t)(\widetilde{v}^{\bold{X}_1}_1)_x+\dot{\bold{X}}_3(t)(\widetilde{v}^{\bold{X}_3}_3)_x-\bar{u}_x=0\\
		&\bar{u}_t+\dot{\bold{X}}_1(t)(\widetilde{u}^{\bold{X}_1}_1)_x+\dot{\bold{X}}_3(t)(\widetilde{u}^{\bold{X}_3}_3)_x+\bar{p}_x =\left(\mu \frac{\bar{u}_x}{\bar{v}}\right)_x+Q_1,\\
		& \frac{R}{\gamma-1}\bar{\theta}_t +\frac{R}{\gamma-1}\dot{\bold{X}}_1(t)(\widetilde{\theta}^{\bold{X}_1}_1)_x +\frac{R}{\gamma-1}\dot{\bold{X}}_3(t)(\widetilde{\theta}^{\bold{X}_3}_3)_x +\bar{p}\bar{u}_x= \left(\kappa \frac{\bar{\theta}_x}{\bar{v}}\right)_x + \mu \frac{\bar{u}^2_x}{\bar{v}}+Q_2,
	\end{cases}
\end{equation}
where $\bar{p}={R\bar{\theta}\over \bar{v}}$ and the error terms 
\begin{equation*}
	Q_i:= Q^I_i + Q^C_i , \quad  i=1, 2,
\end{equation*}
with the wave interaction terms
\begin{align}\label{def:Q}
	\begin{aligned}
		Q^I_1:= &(\bar{p}- p^{\bold{X}_1}_1 - p^C - p^{\bold{X}_3}_3)_x - \mu \left(\frac{\bar{u}_x}{\bar{v}} - \frac{(\widetilde{u}^{\bold{X}_1}_1)_x}{\widetilde{v}^{\bold{X}_1}_1} - \frac{u^D_x}{v^D} - \frac{(\widetilde{u}^{\bold{X}_3}_3)_x}{\widetilde{v}^{\bold{X}_3}_3}\right)_x,   	\\
		Q^I_2 := & \left(\bar{p}\bar{u}_x - p^{\bold{X}_1}_1(\widetilde{u}^{\bold{X}_1}_1)_x - p^Cu^D_x - p^{\bold{X}_3}_3(\widetilde{u}^{\bold{X}_3}_3)_x\right) -\kappa \left(\frac{\bar{\theta}_x}{\bar{v}} - \frac{(\widetilde{\theta}^{\bold{X}_1}_1)_x}{\widetilde{v}^{\bold{X}_1}_1}- \frac{\theta^D_x}{v^D} -  \frac{(\widetilde{\theta}^{\bold{X}_3}_3)_x}{\widetilde{v}^{\bold{X}_3}_3}\right)_x   \\
		& \ \ -\mu \left(\frac{\bar{u}^2_x}{\bar{v}} - \frac{\left((\widetilde{u}^{\bold{X}_1}_1)_x\right)^2} {\widetilde{v}^{\bold{X}_1}_1} - \frac{(u^D_x)^2}{v^D} - \frac{\left((\widetilde{u}^{\bold{X}_3}_3)_x\right)^2}{\widetilde{v}^{\bold{X}_3}_3}  \right),
	\end{aligned}
\end{align}
and the error terms associated with the viscous contact wave, denoted by $Q^C_1$ and $Q^C_2$, as defined in \eqref{est:QC}.

\subsection{Local existence of solutions}
Before proceeding, we mention that the existence of a local-in-time solution to \eqref{eq:NSF} (or to \eqref{eq:NSFS}) follows from the general result of Nash \cite{N62}.

\begin{proposition}\label{prop:local}
Let $\underline{v}$, $\underline{u}$, and $\underline{\theta}$ be smooth, monotone functions satisfying
	\[(\underline{v}(x), \underline{u}(x),\underline{\theta}(x))=(v_{\pm},u_{\pm},\theta_{\pm}),\quad\mbox{for}\quad \pm x\ge 1,\quad \underline{v}(0)>0 \quad \text{and} \quad \underline{\theta}(0)>0 .\]
Given constants $M_0, M_1, \underline{\kappa}_0, \overline{\kappa}_0, \underline{\kappa}_1,$ and $\overline{\kappa}_1$ satisfying  $$0<M_0<M_1, \quad  0<\underline{\kappa}_1<\underline{\kappa}_0<\overline{\kappa}_0<\overline{\kappa}_1,$$
there exists a constant $T_0 > 0$ such that the following statement holds.

If the initial data $(v_0, u_0, \theta_0)$ satisfy
	\begin{align*}
	&\|(v_0-\underline{v},u_0-\underline{u}, \theta_0-\underline{\theta})\|_{H^1(\R)}\le M_0,\\
	&0<\underline{\kappa}_0\le \rho_0(x), \theta_0(x)\le \overline{\kappa}_0,\quad \forall x\in\R,
\end{align*}  
then the problem \eqref{eq:NSF}–\eqref{eq:farfield} admits a unique solution $(v,u,\theta)$ on the time interval $[0, T_0]$ satisfying
	\begin{align*}
	\begin{aligned}
		&v -\underline{v}\in C([0,T_0];H^1(\R)), \\
		& (u-\underline{u}, \theta-\underline{\theta}) \in C([0,T_0];H^1(\R))\cap L^2(0,T_0;H^2(\R)),
	\end{aligned}
\end{align*} 
and the uniform bound
	\[\|(v-\underline{v},u-\underline{u}, \theta-\underline{\theta})\|_{L^\infty(0,T_0;H^1(\R))}\le M_1.\]
Moreover, the solution remains uniformly positive and bounded:
		\[\underline{\kappa}_1\le v(t,x), \theta(t,x)\le \overline{\kappa}_1, \qquad \forall (t,x) \in [0,T_0]\times \mathbb{R}.\]
\end{proposition}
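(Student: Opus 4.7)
\medskip

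\noindent\textbf{Proof proposal.} The plan is to follow the standard Picard-iteration / linearization strategy for hyperbolic--parabolic mixed systems, as in Nash \cite{N62} and Kazhikhov--Shelukhin. First I would introduce the perturbation $(\phi,\psi,\omega):=(v-\underline{v},u-\underline{u},\theta-\underline{\theta})$, which satisfies a system of the form
\begin{equation*}
\begin{cases}
\phi_t=\psi_x+\underline{u}_x,\\
\psi_t-\bigl(\tfrac{\mu}{v}\psi_x\bigr)_x=F_1(\phi,\psi,\omega,\underline{v},\underline{u},\underline{\theta}),\\
\tfrac{R}{\gamma-1}\omega_t-\bigl(\tfrac{\kappa}{v}\omega_x\bigr)_x=F_2(\phi,\psi,\omega,\ldots),
\end{cases}
\end{equation*}
with zero far-field data and initial data of $H^1$-norm $\le M_0$. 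Since the $v$-equation is a transport/ODE in time while the $u$- and $\theta$-equations are quasilinear parabolic, the natural iteration is to freeze the coefficient $v$ (hence $p$ and the viscosity coefficient) from the previous step and solve a linear parabolic system for $(\psi^{n+1},\omega^{n+1})$, then update $\phi^{n+1}$ by integrating $\phi_t=\psi^{n+1}_x+\underline{u}_x$ in time.

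Next I would set up the solution space
\begin{equation*}
\mathcal{X}_T:=\bigl\{(\phi,\psi,\omega):\ \|(\phi,\psi,\omega)\|_{L^\infty(0,T;H^1)}\le M_1,\ \underline{\kappa}_1\le v,\theta\le\overline{\kappa}_1\bigr\},
\end{equation*}
and prove that the iteration map sends $\mathcal{X}_T$ into itself for $T=T_0$ small. The a priori bounds come from the standard $H^1$ energy estimate: multiply the $\psi$-equation by $\psi$ (and by $-\psi_{xx}$) and the $\omega$-equation by $\omega$ (and by $-\omega_{xx}$), integrate by parts, and absorb the parabolic terms on the left, using that $\tfrac{\mu}{v},\tfrac{\kappa}{v}$ are uniformly bounded below by positive constants as long as $v\ge\underline{\kappa}_1$. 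This yields $\|(\psi^{n+1},\omega^{n+1})\|_{H^1}^2+\int_0^t\|(\psi^{n+1}_x,\omega^{n+1}_x)\|_{H^1}^2\,ds\le C(M_1)(M_0^2+TP(M_1))$, while $\phi^{n+1}$ gains one spatial derivative via $\phi^{n+1}_x=\phi^n_x+\int_0^t\psi^{n+1}_{xx}\,ds$ from the parabolic regularization of $\psi^{n+1}$. Choosing $M_1$ larger than $M_0$ by a fixed factor and $T_0$ small closes the invariance.

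For the positivity and upper bounds on $v$ and $\theta$, I would use the fundamental theorem of calculus in time: $v(t,x)=v_0(x)+\int_0^t u_x\,ds$, so $|v-v_0|\le T^{1/2}\|u_x\|_{L^2_tL^\infty_x}\le C T^{1/2}\|u_x\|_{L^2_tH^1_x}$, which keeps $v$ inside $[\underline{\kappa}_1,\overline{\kappa}_1]$ if $T_0$ is small. The same Sobolev--time argument controls $\theta-\theta_0$ via the $\omega$-equation. Contraction in the weaker norm $L^\infty(0,T;L^2)\cap L^2(0,T;H^1)$ follows by writing the equations for the differences $(\phi^{n+1}-\phi^n,\psi^{n+1}-\psi^n,\omega^{n+1}-\omega^n)$ and performing the same energy estimate; the right-hand side is then controlled by the difference at the previous step times a factor $C(M_1)T^{1/2}$, which is $<1$ for $T_0$ small.

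The main obstacle, which is technical rather than conceptual, is obtaining the parabolic regularity estimates with coefficients $\tfrac{\mu}{v}$ and $\tfrac{\kappa}{v}$ where $v$ is only in $L^\infty(0,T;H^1)$ and hence merely H\"older in $x$ and Lipschitz in $t$. One must be careful that the commutators $[\partial_x,\tfrac{1}{v}\partial_x]$ produce terms involving $v_x$, which only belong to $L^\infty_tL^2_x$; these are handled by Gagliardo--Nirenberg interpolation $\|\psi_x\|_{L^\infty}\le C\|\psi_x\|_{L^2}^{1/2}\|\psi_{xx}\|_{L^2}^{1/2}$, absorbing the $\|\psi_{xx}\|_{L^2}$ factor into the parabolic gain after a Young inequality. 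A second, smaller point is that because $\underline{u}_x$ is nonzero on the bounded interval where $\underline{v},\underline{u},\underline{\theta}$ transition, the source terms in the linearized system are smooth and compactly supported, so they cause no integrability issue at $\pm\infty$. Together these give the asserted regularity $v-\underline{v}\in C([0,T_0];H^1)$ and $(u-\underline{u},\theta-\underline{\theta})\in C([0,T_0];H^1)\cap L^2(0,T_0;H^2)$.
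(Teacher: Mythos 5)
The paper does not actually prove Proposition~\ref{prop:local}; it is stated as a recollection of the classical local-existence result and the reader is referred to Nash \cite{N62} for the general setting. So there is no ``paper proof'' to compare against line by line. Your sketch is a standard and correct reconstruction of the Picard-iteration argument that underlies such statements for one-dimensional hyperbolic--parabolic systems, and I have no objection to the overall strategy: freeze the transport coefficient, solve the linear parabolic subsystem for $(\psi,\omega)$, recover $\phi$ by integrating the continuity equation, close via $H^1$ energy estimates, obtain pointwise bounds on $v,\theta$ from short-time continuity, and contract in the weaker norm $L^\infty(0,T;L^2)\cap L^2(0,T;H^1)$.

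Two small points worth tightening. First, the formula $\phi^{n+1}_x=\phi^n_x+\int_0^t\psi^{n+1}_{xx}\,ds$ should read $\phi^{n+1}_x=\phi_{0,x}+\int_0^t\bigl(\psi^{n+1}_{xx}+\underline{u}_{xx}\bigr)\,ds$: the initial datum, not the previous iterate, anchors the $x$-derivative, and the smooth compactly-supported source $\underline{u}_{xx}$ should not be dropped, although neither change affects the conclusion that the parabolic gain on $\psi^{n+1}$ yields $\phi^{n+1}\in C([0,T];H^1)$. Second, the pointwise confinement $\underline{\kappa}_1\le\theta\le\overline{\kappa}_1$ is not quite ``the same Sobolev--time argument'' as for $v$: for $v$ one has the exact identity $v(t)-v_0=\int_0^t u_x\,ds$, while for $\theta$ the time derivative is supplied by the parabolic equation and lives only in $L^2_tL^2_x$. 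One clean fix is the interpolation $\|\omega(t)-\omega_0\|_{L^\infty}\le C\|\omega(t)-\omega_0\|_{L^2}^{1/2}\,\|\omega(t)-\omega_0\|_{H^1}^{1/2}\le C\,t^{1/4}\,\|\omega_t\|_{L^2_tL^2_x}^{1/2}(2M_1)^{1/2}$, where the bound on $\omega_t$ in $L^2_tL^2_x$ comes from reading off the equation and using the parabolic regularity already established; this vanishes as $t\to0$ and lets you choose $T_0$ to keep $\theta$ inside $[\underline{\kappa}_1,\overline{\kappa}_1]$. With these corrections your argument is a complete and standard route to the proposition, consistent in spirit with the cited reference.
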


\subsection{Construction of the weight function}
In order to localize the analysis around each shock wave, we define two distinct weight functions, $a_1$ and $a_3$, associated with the $1$- and $3$-shock profiles, respectively. For each $i = 1, 3$, we define the weight function as
\begin{align}\label{def_a}
	\begin{aligned}
		& a_1(x-\sigma_1t):=1 + \frac{1}{\sqrt{\delta_1}}(\tilde{v}_1(x-\sigma_1 t)-v_-), \\
		&a_3(x-\sigma_3t):=1 + \frac{1}{\sqrt{\delta_3}}(\tilde{v}_3(x-\sigma_3 t)- v^*).
	\end{aligned}
\end{align}	
where $\delta_1=|v_--v_*|$ and $\delta_3=|v_+-v^*|$ denote the strengths of $1$-shock and $3$-shock, respectively.

Note that for each $i=1, 3$, the weight function $a_i$ satisfies $\frac{1}{2}\le 1-\sqrt{\delta_i} \leq a_i\le 1+\sqrt{\delta_i}\le \frac{3}{2}$ and
\begin{align}\label{eq:ax}
	(a_1)_x = \frac{1}{\sqrt{\delta_1}}(\tilde{v}_1)_x, \qquad (a_3)_x = \frac{1}{\sqrt{\delta_3}}(\tilde{v}_3)_x,
\end{align}
from which we have 
\begin{align} \label{est:ax}
	|(a_i)_x| \sim \frac{1}{\sqrt{\delta_i}}|(\tilde{v}_i)_x|, \qquad \text{and so }\qquad 	\|(a_i)_x\|_{L^{\infty}(\mathbb{R})}  \leq \delta_i\sqrt{\delta_i}, \quad \|(a_i)_x\|_{L^{1}(\mathbb{R})}=\sqrt{\delta_i}.
\end{align}

In addition, 
we have
\[\sigma_ia_i'(x-\sigma_it)>0, \quad \ \text{for each} \ i=1, 3.\]

To capture the influence of both shifted shock components within the composite wave, we define the following combined weight function obtained from the corresponding shifted weights:
\begin{align}\label{def:a}
	a^{\bold{X}_1, \bold{X}_3}(t,x): = a_1^{\bold{X}_1}(x-\sigma_1t) +  a_3^{\bold{X}_3}(x-\sigma_3t) -1.
\end{align}
Here and in the sequel, for an arbitrary function $g:\mathbb{R}\to\mathbb{R}$, we make use of the abbreviated notation defined below:
\[g^{\bX_i}(\cdot):=g(\cdot-\bX_i(t)), \quad i=1,3.\] 
Note that $\frac{1}{2}\le 1-2\sqrt{\delta_0}\le a^{\bold{X}_1, \bold{X}_3}\le 1+2\sqrt{\delta_0}\le\frac{3}{2}$.

\subsection{Construction of shifts}
To capture the stability dynamics, the viscous shock profiles must be adjusted by suitable shifts.
We therefore construct the shift functions explicitly by defining $(\bold{X}_1, \bold{X}_3)$ as the solutions of the following system of ODEs:

\begin{equation}\label{def:shift}
	\begin{cases}
		&\dot{\bold{X}}_1= -\frac{M_1}{\delta_1} \di\int_{\mathbb{R}}a^{\bold{X}_1, \bold{X}_3} \left[(\widetilde{u}_1^{\bold{X}_1})_x(u-\bar{u}) +\frac{(\widetilde{v}_1^{\bold{X}_1})_x\bar{p}}{\bar{v}}(v-\bar{v}) +\frac{R}{\gamma-1}\frac{(\widetilde{\theta}_1^{\bold{X}_1})_x}{\bth}(\theta-\bth)  \right]dx,  \\
		& \dot{\bold{X}}_3 = -\frac{M_3}{\delta_3} \di\int_{\mathbb{R}}a^{\bold{X}_1, \bold{X}_3} \left[(\widetilde{u}_3^{\bold{X}_3})_x(u-\bar{u}) +\frac{(\widetilde{v}_3^{\bold{X}_3})_x\bar{p}}{\bar{v}}(v-\bar{v}) +\frac{R}{\gamma-1}\frac{(\widetilde{\theta}_3^{\bold{X}_3})_x}{\bth}(\theta-\bth)  \right]dx,  \\
		&	\bold{X}_1(0) =\bold{X}_3(0) =0.
	\end{cases}
\end{equation}
Here, $a^{\bold{X}_1, \bold{X}_3}$ denotes the shifted weight function introduced in \eqref{def:a}, and the constants
$$M_1 := \frac{3}{2}\frac{\alpha_1}{(\sigma_-)^2} \left(1+ \frac{2\kappa(\gamma-1)^2}{\mu R \gamma}\right) \quad \text{and} \quad M_3:= \frac{3}{2}\frac{\alpha_3}{(\sigma^*)^2}\left(1+ \frac{2\kappa(\gamma-1)^2}{\mu R \gamma}\right),$$
are specifically chosen for use in the proof of Proposition \ref{prop:main}. Following the argument of \cite[Lemma 3.2]{KVW-NSF}, we can ensure the existence of a Lipschitz continuous solution to the system of ODEs \eqref{def:shift}, provided that $v$ and $\theta$ remain uniformly positive and bounded, and that $u$ is bounded as well.
These conditions are ensured by Proposition \ref{prop:local} together with the a priori assumption \eqref{apriori_small}.

\subsection{A priori estimate}
We next formulate the central proposition that provides the a priori estimates required to derive the large-time asymptotics of the system.
\begin{proposition}\label{prop:main}
Let $(v_+,u_+,\theta_+)\in\mathbb{R}_+\times\mathbb{R}\times\mathbb{R}_+$ be a given constant state.
	Then there exist positive constants $C_0$, $\delta$, and $\varepsilon$ such that the following statement holds.
	
Assume that $(v,u,\theta)$ is a solution of \eqref{eq:NSFS} on the interval $[0,T]$ for some $T>0$, and let $(\bar{v},\bar{u},\bar{\theta})$ denote the composite wave consisting of two shifted viscous shocks and a viscous contact discontinuity, as defined in \eqref{eq:CW}, where the shifts $(\bold{X}_1,\bold{X}_3)$ satisfy the system \eqref{def:shift}.
Suppose further that the amplitudes of the three wave components satisfy $$\delta_1, \delta_C,\delta_3 <\delta_0,$$
and that the perturbation satisfies the regularity properties
	\begin{align*}
	v-\bar{v} &\in C([0,T];H^1(\R)),\\
	u-\bar{u},\theta- \bar{\theta}&\in C([0,T];H^1(\R))\cap L^2(0,T;H^2(\R)),
\end{align*}
together with the smallness condition
	\begin{equation}\label{apriori_small}
	\|(v-\bar{v}, u-\bar{u}, \theta-\bar{\theta})\|_{L^\infty\left(0,T;H^1(\R)\right)}\le \e.
\end{equation}
Then the following estimates hold:
\begin{align*}
	\begin{aligned}
		&\sup_{t\in[0,T]}
		\|(v-\bar{v}, u-\bar{u},\theta-\bar{\theta})(t,\cdot)\|_{H^1(\R)} \\
		&\quad
		+ \left(\int_0^T \Big(
		\sum_{i\in\{1,3\}}\delta_i|\dot{\bold{X}}_i|^2
		+ \sum_{i\in\{1,3\}}\mathcal{G}_i^{S}
		+ \bold{D}_{v_1}
		+ \bold{D}_{u_1}
		+ \bold{D}_{\theta_1}
		+ \bold{D}_{u_2}
		+ \bold{D}_{\theta_2}
		\Big)\,ds \right)^{\frac12} \\
		&\le C_0
		\|(v_0-\bar{v}(0,\cdot),u_0-\bar{u}(0,\cdot),\theta_0-\bar{\theta}(0,\cdot))\|_{H^1(\R)}
		+ C_0\,\delta_0^{\frac12}.
	\end{aligned}
\end{align*}

In particular, for all $t\in[0,T]$, the shift functions satisfy
\[
|\dot{\bold{X}}_1(t)| + |\dot{\bold{X}}_3(t)|
\le C_0\,\|(v-\bar{v},u-\bar{u}, \theta-\bar{\theta})(t,\cdot)\|_{L^\infty(\R)}.
\]

Here, the constant $C_0$ is independent of $T$. 

Furthermore, the dissipation and localized good terms are defined by
	
\begin{align}
\begin{aligned}\label{good_terms}
		&\mathcal{G}_i^{S}: = \int_{\mathbb{R}} \left|(\widetilde{v}_i)^{\bold{X}_i}_x\right| \left|\phi_i (v-\bar{v}, u-\bar{u},\theta-\bth )\right|^2 dx,  \\
		&\bold{D}_{v_1}:=\int_{\R}|(v-\bar{v})_{x}|^2\,dx, \quad  \bold{D}_{u_1}:=\int_{\R}|(u-\bar{u})_{x}|^2\,dx, \quad  \bold{D}_{u_2}:=\int_{\R}|(u-\bar{u})_{xx}|^2\,dx, ,  \\ 
		& \bold{D}_{{\theta}_1} := \int_{\R}|(\theta-\bar{\theta})_{x}|^2\,dx,  \quad \bold{D}_{\theta_2}:=\int_{\R}|(\theta-\bar{\theta})_{xx}|^2\,dx,
		\end{aligned}
	\end{align}
where $\phi_1$ and $\phi_3$ are the cutoff (localization) functions given by
	\begin{equation}\label{def:cutoff}
		\phi_1(t,x)	:= \left\{\begin{array}{ll}
			1 & \text{if} \ x<\frac{\bold{X}_1(t)+ \sigma_1 t}{2},\\
			0 & \text{if} \ x>\frac{\bold{X}_3(t)+ \sigma_3 t}{2}, \\
			\text{linearly decreasing from 1 to 0} &  \text{if} \ \frac{\bold{X}_1(t)+ \sigma_1 t}{2}\leq x\leq \frac{\bold{X}_3(t)+ \sigma_3 t}{2},
		\end{array}\right.
		\phi_3(t,x) := 1-\phi_1(t,x).
	\end{equation}
\end{proposition}

The proof of this key proposition will be given in Sections \ref{relative estimate} and \ref{higher order}.
In what follows, we show how Proposition \ref{prop:main} leads directly to the proof of Theorem \ref{main theorem}.

\subsection{Conclusion}
Building upon Propositions \ref{prop:local} and \ref{prop:main}, the desired large-time behavior \eqref{est:longtime} follows from a standard continuation argument.
The proof proceeds along classical lines and closely parallels the approach developed in \cite{KVW23}.
For this reason, we omit the detailed steps and move directly to completing the proof of Theorem \ref{main theorem}.
Consequently, the remainder of this paper is devoted to establishing Proposition \ref{prop:main}, which provides the essential a priori estimates underlying the argument.

\section{Zeroth Order Estimates}\label{relative estimate}
The objective of this section is to derive $L^2$–type estimates for the perturbation by employing the $a$–contraction method with shifts.
This estimate constitutes a crucial step in the proof of Proposition \ref{prop:main}.
More precisely, the main result of this section is summarized in the lemma stated below, to which the subsequent analysis is devoted.
\begin{lemma}\label{lm_zeroorder}
	Assume that the hypotheses of Proposition \ref{prop:main} are satisfied.
	Then there exists a constant $C>0$ such that, for all $t\in[0,T]$, the following estimate holds.
\begin{align}\label{est_zeroorder}
	\begin{aligned}
		&  \int_{\mathbb{R}}a\bar{\theta}\eta\left(U|\bar{U}\right)dx + \int_{0}^{t}  \left(\sum_{i\in\{1,3\}} \delta_i|\dot{\bold{X}_i}|^{2}    + \sum_{i\in\{1,3\}}\mathcal{G}_i^{S} +\bold{D}_{u_1} + \bold{D}_{\theta_1} \right) ds \\
		 	\leq & C\int_{\mathbb{R}}a\bar{\theta}\eta\left(U|\bar{U}\right)dx\bigg|_{t=0} 
		+ C\delta_1^{\frac{4}{3}}\left(\delta_3^{\frac{4}{3}} e^{-C\delta_3 t}  + \delta_C^{\frac{4}{3}}e^{-Ct}\right) +  C\delta_3^{\frac{4}{3}}\left(\delta_1^{\frac{4}{3}} e^{-C\delta_1 t}  + \delta_C^{\frac{4}{3}}e^{-Ct}\right)   \\
		& + C\delta_0 \int_{0}^{t}\left\|(v-\bar{v})_x\right\|_{L^2(\mathbb{R})}^2ds+ C\delta_0,   
	\end{aligned}
\end{align}	
	where $\mathcal{G}_i^{S}$, $\bold{D}_{u_1}$ and $\bold{D}_{\theta_1}$ are the terms defined in \eqref{good_terms}.
\end{lemma}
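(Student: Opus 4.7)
The plan is to differentiate the weighted relative entropy functional
\[
\mathcal{E}(t):=\int_{\R} a^{\mathbf{X}_1,\mathbf{X}_3}(t,x)\,\bar\theta(t,x)\,\eta\bigl(U\,|\,\bar U\bigr)(t,x)\,dx
\]
in time, using the Navier--Stokes--Fourier system \eqref{eq:NSFS} for $U=(v,u,E)$ and the approximate composite-wave system \eqref{eq:NSFCW} for $\bar U$, whose inhomogeneities $Q_i=Q_i^I+Q_i^C$ encode the wave interactions and the contact-wave residual. The resulting identity for $\frac{d}{dt}\mathcal{E}(t)$ naturally splits into four groups of terms: (i) a hyperbolic flux contribution producing an $a_x$-weighted quadratic form concentrated on the two shock profiles; (ii) parabolic dissipation yielding the good terms $\mathbf{D}_{u_1}+\mathbf{D}_{\theta_1}$; (iii) linear-in-perturbation shift terms proportional to $\dot{\mathbf{X}}_1$ and $\dot{\mathbf{X}}_3$; and (iv) error integrals against the $Q_i$'s.

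The ODE definition \eqref{def:shift} is tailored so that the worst parts of (iii), namely the linear functionals of the perturbation that multiply $\dot{\mathbf{X}}_i$, are converted into non-positive contributions of the form $-c_i\,\delta_i|\dot{\mathbf{X}}_i|^2$; the specific choice of $M_i$ together with the relations $|(\tilde v_i)_x|\sim|(\tilde u_i)_x|\sim|(\tilde\theta_i)_x|$ from Lemma \ref{lem:vs} is tuned precisely to absorb the remaining quadratic shift corrections. For (i), I would localize by the cutoffs $\phi_i$ of \eqref{def:cutoff}: in each shock region the $a_x$-weighted hyperbolic quadratic form becomes negative definite and extracts the sharp contraction $-\mathcal{G}_i^S$ up to relative errors of order $\delta_0$. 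Here the Poincar\'e-type inequality of Lemma \ref{Poincare} handles the cross terms among the three perturbation components, and the scaling $\delta_i\ll\lambda_i\ll\sqrt{\delta_i}$ prescribed in \eqref{def_a} preserves the correct signs. Outside the shock cones, the shock derivatives are exponentially small by Lemma \ref{lem:vs}, so those contributions are absorbed as higher-order corrections.

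For (iv), I would further split the error integrals into viscous contact wave residuals and wave interactions. The integrals against $Q_1^C,Q_2^C$ are controlled by the Gaussian decay \eqref{est:QC}; a Cauchy--Schwarz estimate against the perturbation produces the $C\delta_0$ remainder and, when a derivative falls on $v-\bar v$, the $C\delta_0\int_0^t\|(v-\bar v)_x\|^2\,ds$ term. The interaction error $Q_i^I$, whose structure is displayed in \eqref{def:Q}, reduces to cross-products of shock derivatives with the contact wave (or of $1$- and $3$-shock derivatives against each other) supported in regions that separate linearly in time thanks to \eqref{est:waveseparation}. Combining Lemma \ref{lem:vs} (exponential decay at rate $C\delta_i$) and Lemma \ref{lem:vcd} (Gaussian localization of the contact wave) with a H\"older estimate of exponents $(4/3,4)$ yields the advertised products $\delta_i^{4/3}\delta_j^{4/3}$ multiplied by $e^{-C\delta_j t}$ for shock--shock interactions and by $e^{-Ct}$ for shock--contact interactions, the latter using that the shock speeds $\sigma_i$ are bounded away from zero (the contact-wave speed).

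I expect the principal obstacle to be making these interaction estimates quantitative with the exact fractional exponents and time decays stated. This requires, first, that the shifts $\mathbf{X}_i(t)$ remain sub-linear so that the separation \eqref{est:waveseparation} persists on $[0,T]$ (which is bootstrapped from the a priori smallness of $\dot{\mathbf{X}}_i$); second, a careful splitting of the spatial integrals along the midpoints $\tfrac12(\mathbf{X}_i+\sigma_i t)$ so that on each side only one of the three waves is non-negligible. A further subtlety is the treatment of the cubic and higher-order contributions generated by expanding the relative entropy around $\bar U$: these must be absorbed either into $\mathcal{G}_i^S$ via the smallness \eqref{apriori_small} or into the dissipation $\mathbf{D}_{u_1}+\mathbf{D}_{\theta_1}$ after Sobolev embedding, leaving only the displayed error terms on the right-hand side of \eqref{est_zeroorder}.
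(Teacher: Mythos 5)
Your roadmap matches the paper's in its essentials: compute $\frac{d}{dt}\int a\bar\theta\,\eta(U|\bar U)$ using \eqref{eq:NSFS} and \eqref{eq:NSFCW}, use the shift ODE \eqref{def:shift} to turn the linear shift functionals into $-\tfrac{\delta_i}{M_i}|\dot{\bold X}_i|^2$, localize the leading bad quadratic form by the cutoffs $\phi_i$ and pass to the normalized variables $y_i,w_i$ so that the Poincar\'e-type inequality of Lemma \ref{Poincare} lets the dissipation absorb the residual $\int_0^1|w_i|^2\,dy_i$, choose $M_i$ so that the $\bar w_i^2$ pieces cancel, and control the wave-interaction errors $Q_i^I$ via the separation \eqref{est:waveseparation} combined with the exponential decays in Lemmas \ref{lem:vs}--\ref{lem:vcd}. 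This is exactly Lemma \ref{lma_part1} plus the subsequent bookkeeping.

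Two genuine gaps remain. First, the term $C\delta_0\int_0^t\|(v-\bar v)_x\|^2\,ds$ in \eqref{est_zeroorder} does not arise, as you suggest, from integrating $Q_i^C$ by parts onto $v-\bar v$; the contact-wave residuals are handled directly by Cauchy--Schwarz plus the time-integrability of $\|(Q_1^C,Q_2^C)\|_{L^2}$ from \eqref{est_Q12}, which yields only the flat $C\delta_0$. Instead, the $\|(v-\bar v)_x\|^2$ term is forced by the Gaussian-weighted space--time integral $\int_0^t(1+s)^{-1}\int e^{-2C_1 x^2/(1+s)}|U-\bar U|^2\,dx\,ds$, which recurs in $\bold B_6$ and several $\bold B_i$ estimates and cannot be absorbed by $\mathcal G_i^S$ or $\bold D$; closing it requires the auxiliary estimate of Lemma \ref{lm_spacetimeCD}, whose right-hand side is precisely where $\|(v-\bar v)_x\|^2$ enters. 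Second, your argument as sketched does not address the singular factor produced by differentiating the cutoff: $|\partial_x\phi_i|\lesssim 1/t$, so after Young's inequality the differential inequality acquires a term $\frac{C}{\delta_* t^2}\int\eta(U|\bar U)\,dx$ that is not integrable near $t=0$, and a direct Gr\"onwall on $[0,t]$ fails. The paper resolves this by first proving the rough bound $\frac{d}{dt}\int a\bar\theta\,\eta\le C\delta_0$ uniformly for $t\in(0,1]$ (using only $L^\infty$ bounds, without any Poincar\'e argument), and then running Gr\"onwall only on $[1,t]$ where $\int_1^\infty s^{-2}\,ds<\infty$. Without this short-time/long-time split the final estimate would not close.
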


\subsection{Wave interaction estimates}
Our first step is to establish basic bounds for the interaction terms $Q^I_i$ appearing in \eqref{def:Q}.

From the a priori bound \eqref{apriori_small} together with the Sobolev embedding, it follows that
$$	\|(v-\bar{v}, u-\bar{u}, \theta-\bar{\theta})\|_{L^\infty\left((0,T)\times \R\right)}\le \e, \quad \text{and so}\quad v,u,\theta \in L^\infty\left((0,T)\times \R\right).$$
By applying Lemma \ref{lem:vs} to the ODE system \eqref{def:shift}, we deduce that
\begin{align}
	\begin{aligned}\label{est:X}
		|\dot{\bold{X}}_1(t)| + |\dot{\bold{X}}_3(t)| &\leq \sum_{i\in\{1,3\}}\frac{C}{\delta_i}\|\left(v-\bar{v}, u-\bar{u}, \theta-\bar{\theta}\right)\|_{L^\infty\left(\R\right)} \int_{\R} |(\widetilde{v}_i)_x| dx  \\
		& \leq \|\left(v-\bar{v}, u-\bar{u}, \theta-\bar{\theta}\right)\|_{L^\infty\left(\R\right)}<\varepsilon.
	\end{aligned}
\end{align}
Therefore, from \eqref{est:X} and the smallness assumption on $\varepsilon$, it follows that
\[\bold{X}_1 (t) \leq -\frac{\sigma_1}{2}t, \qquad \bold{X}_3(t) \geq -\frac{\sigma_3}{2}t, \quad t>0,\]
or equivalently, 
\[\bold{X}_1 (t)+ \sigma_1t  \leq \frac{\sigma_1}{2}t, \qquad \bold{X}_3(t) +\sigma_3t \geq \frac{\sigma_3}{2}t, \quad t>0,\]
which proves \eqref{est:waveseparation}.

\begin{lemma}\label{lm:interaction}
	Let $Q^I_i$ $(i=1,2)$ be as given in \eqref{def:Q}.
	Under the assumptions of Proposition \ref{prop:main}, there exists a constant $C>0$, independent of $T$, $\delta_1$, $\delta_3$, and $\delta_C$, for which the following bound is valid for all $t\le T$:
\begin{align*}\label{est:interaction}
	\begin{aligned}
			&\|Q^I_i\|_{L^2(\mathbb{R})} \leq  C \delta_1(\delta_C + \delta_3 )e^{-C\delta_1 t} + C\delta_C(\delta_1 + \delta_3)e^{-Ct} + C \delta_3(\delta_1 + \delta_C )e^{-C\delta_3 t}, \ \text{for} \ i=1, 2,\\
& \|(\widetilde{v}^{\bold{X}_1}_1)_x||(v^D-v_*,\theta^D-\theta_*)|\|_{L^2(\mathbb{R})} + \||(\widetilde{v}^{\bold{X}_1}_1)_x||(\widetilde{v}^{\bold{X}_3}_3-v^*,\widetilde{\theta}^{\bold{X}_3}_3-\theta^*)|\|_{L^2(\mathbb{R})}  \\
		  &\leq  C\delta_1^{\frac{3}{2}}\left(\delta_3 e^{-C\delta_3 t}  + \delta_Ce^{-Ct}\right), \\
		& \|(\widetilde{v}^{\bold{X}_3}_3)_x||(v^D-v^*,\theta^D-\theta^*)|\|_{L^2(\mathbb{R})} + \||(\widetilde{v}^{\bold{X}_3}_3)_x||(\widetilde{v}^{\bold{X}_1}_1-v_-*\widetilde{\theta}^{\bold{X}_1}_1-\theta_*)|\|_{L^2(\mathbb{R})}  \\
		  &\leq  C\delta_3^{\frac{3}{2}}\left(\delta_1 e^{-C\delta_1t}  + \delta_Ce^{-Ct}\right).
	\end{aligned}
\end{align*}
\end{lemma}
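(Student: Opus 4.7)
The plan is to exploit the spatial separation of the three component waves, established in \eqref{est:waveseparation}, together with the localized decay estimates in Lemmas \ref{lem:vs} and \ref{lem:vcd}. At any fixed time $t$, the $1$-shock is essentially supported near $x_1(t):=\sigma_1 t+\bX_1(t)\le \sigma_1 t/2<0$, the $3$-shock is supported near $x_3(t):=\sigma_3 t+\bX_3(t)\ge \sigma_3 t/2>0$, while the viscous contact layer is Gaussian-localized around $x=0$ on the scale $\sqrt{1+t}$. Hence in each of the three zones only one of the factors in any interaction product is non-negligible, and the cross terms acquire either an $e^{-C\delta_i t}$ (shock--shock) or $e^{-Ct}$ (shock--contact) factor because $|x_1(t)|,|x_3(t)|\gtrsim t$.

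For the simpler bilinear estimates, I would argue directly. For $\|(\widetilde{v}^{\bX_1}_1)_x\,(v^C-v_*)\|_{L^2(\mathbb{R})}$, combine the pointwise bounds
\[
\bigl|(\widetilde{v}^{\bX_1}_1)_x(x)\bigr|\le C\delta_1^2 e^{-C\delta_1|x-x_1(t)|},\qquad |v^C-v_*|(t,x)\le C\delta_C e^{-C_1 x^2/(1+t)}\text{ for }x<0,
\]
noting that on the support of $(\widetilde v_1^{\bX_1})_x$ the point $x$ lies well in $x<0$ (since $x_1(t)\le \sigma_1 t/2$), so $|x|\ge C t$ and $e^{-C_1 x^2/(1+t)}\le e^{-Ct}$. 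Squaring and integrating in $x$ yields $\|(\widetilde{v}^{\bX_1}_1)_x(v^C-v_*)\|_{L^2}^2\le C\delta_1^3\delta_C^2 e^{-Ct}$, whence the claimed $C\delta_1^{3/2}\delta_C e^{-Ct}$ bound. The companion bound against $|\widetilde v_3^{\bX_3}-v^*|$ follows identically, because at a point in the support of $(\widetilde v_1^{\bX_1})_x$ the distance to $x_3(t)$ is at least $|x_3(t)-x_1(t)|\ge C t$, while $|\widetilde v_3^{\bX_3}-v^*|\le C\delta_3 e^{-C\delta_3|\cdot -x_3(t)|}$, producing the $e^{-C\delta_3 t}$ factor. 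The two estimates centered on the $3$-shock are symmetric.

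For $\|Q^I_i\|_{L^2(\mathbb{R})}$ the plan is to Taylor-expand the nonlinear functions $p(v,\theta)=R\theta/v$, $1/v$, and $u_x^2/v$ about the intermediate constant states $(v_*,\theta_*)$ and $(v^*,\theta^*)$, using the decomposition $\bar v=\widetilde v_1^{\bX_1}+v^C+\widetilde v_3^{\bX_3}-v_*-v^*$ and similarly for $\bar u,\bar\theta$. After cancelling the three individual-wave contributions $p^{\bX_1}_1$, $p^C$, $p^{\bX_3}_3$, the residual is a sum of quadratic products each involving the deviation of two \emph{different} waves from a common intermediate state; e.g.\ a typical term is $\bigl(\widetilde v_1^{\bX_1}-v_*\bigr)\bigl(v^C-v_*\bigr)$ or $(\widetilde v_1^{\bX_1}-v_*)(\widetilde v_3^{\bX_3}-v^*)$, and the viscous pieces contribute analogous products with one factor replaced by a first derivative. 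Taking the $x$-derivative distributes onto one factor at a time and preserves the exponential/Gaussian profile. Estimating each resulting product exactly as in the previous paragraph and collecting the worst-case prefactors yields the stated bound $C\delta_1(\delta_C+\delta_3)e^{-C\delta_1 t}+C\delta_C(\delta_1+\delta_3)e^{-Ct}+C\delta_3(\delta_1+\delta_C)e^{-C\delta_3 t}$.

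The main obstacle will be bookkeeping the viscous interaction terms $\mu(\bar u_x/\bar v-\sum_i (\widetilde u^{\bX_i}_i)_x/\widetilde v^{\bX_i}_i-u^C_x/v^C)_x$ and the kinetic-energy piece in $Q^I_2$: one must expand $1/\bar v$ about each intermediate specific volume and verify that every residual product still carries one shock derivative (for the required $\delta_i^2$ factor) or is compensated by the Gaussian tail of $u^C_x$ at the shock locations. Once this careful splitting is done, the same separation argument above delivers the stated bounds without further difficulty.
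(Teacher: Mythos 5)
Your high-level plan is the same as the paper's (decompose the nonlinear residual into cross-products of deviations of distinct waves from a common intermediate state, then exploit the spatial separation \eqref{est:waveseparation}), but the execution of the bilinear estimates has a genuine gap.

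You argue, for instance, that since ``on the support of $(\widetilde v_1^{\bX_1})_x$ the point $x$ lies well in $x<0$'', one may apply the Gaussian bound $|v^C-v_*|\le C\delta_C e^{-C_1x^2/(1+t)}\le C\delta_C e^{-Ct}$ pointwise. There are two problems. First, $(\widetilde v_1^{\bX_1})_x$ is not compactly supported: its essential support is a band of width $\sim 1/\delta_1$ around $x_1(t)\le\sigma_1t/2$, which overlaps $\{x>0\}$ whenever $t\lesssim 1/\delta_1$, and the function is positive (if small) on all of $\mathbb{R}$. Second, and more importantly, the estimate in Lemma \ref{lem:vcd} gives $|v^C-v_*|\le C\delta_C e^{-C_1x^2/(1+t)}$ only for $x<0$; for $x>0$ one only has $|v^C-v_*|\le C\delta_C$ with no spatial decay. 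So the bound you invoke fails on the part of the domain where $(\widetilde v_1^{\bX_1})_x$ is still non-negligible.

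The correct repair --- which is exactly what the paper does --- is to split the integral, say at $x=\sigma_1 t/4$. On $\{x\le\sigma_1t/4\}$ the Gaussian bound for $|v^C-v_*|$ (respectively the decay of $\widetilde v_3^{\bX_3}-v^*$) gives the $e^{-Ct}$ (respectively $e^{-C\delta_3 t}$) factor. On the complement $\{x>\sigma_1t/4\}$, where $|v^C-v_*|$ and $|\widetilde v_3^{\bX_3}-v^*|$ are merely $O(\delta_C)$, $O(\delta_3)$, one instead uses that $|x-\sigma_1t-\bX_1(t)|\ge -\sigma_1t/4$ there, which yields $|(\widetilde v_1^{\bX_1})_x|\le C\delta_1^2 e^{-\frac{C\delta_1}{2}|x-\sigma_1t-\bX_1(t)|}e^{-C\delta_1 t}$ and therefore a contribution with the slower rate $e^{-C\delta_1 t}$. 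Your proposal neither splits the domain nor produces this $e^{-C\delta_1t}$ contribution, so as written it over-claims the $e^{-Ct}$ rate on a region where it does not hold.

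Once the domain-splitting is inserted, your decomposition of $Q^I_i$ into quadratic wave-interaction products (cf.\ the $R_1,R_2,R_3$ splitting in the paper) and the treatment of the viscous terms carry through exactly as you outline; the conceptual understanding of why the estimate holds is correct, only the proof of the tail region is missing.
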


\begin{proof}
We omit the details, since the proof closely follows that of \cite[Lemma 4.2]{KVW-NSF}, with minor adjustments specific to our context. The interested reader may refer to that work for the detailed argument.
\end{proof}
\begin{lemma}\label{lem:cutoffinteraction}
Define $\phi_1$ and $\phi_3$ according to \eqref{def:cutoff}.
Then one can find positive constants $\delta_0$ and $C$ such that, for all small parameters $\delta_1, \delta_C, \delta_3 \in (0,\delta_0)$, the estimates below hold true.
	\begin{align*}
       \begin{aligned}
       	& \phi_3 |(\widetilde{v}^{\bold{X}_1}_1)_x| \leq C\delta^2_1 \exp(-C\delta_1 t), \quad \phi_1 |(\widetilde{v}^{\bold{X}_3}_3)_x| \leq C\delta^2_3 \exp(-C\delta_3 t), \quad t>0, \quad x\in\R,  \\
       	& \int_{\R}\phi_3 |(\widetilde{v}^{\bold{X}_1}_1)_x| dx \leq C\delta_1 \exp(-C\delta_1 t), \quad \int_{\R}\phi_1|(\widetilde{v}^{\bold{X}_3}_3)_x|  dx \leq C\delta_3 \exp(-C\delta_3 t), \quad t>0.
       \end{aligned}
	\end{align*}
\end{lemma}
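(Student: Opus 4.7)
The plan is to exploit the geometric separation between the supports of $\phi_3$ and the $1$-shock profile (respectively $\phi_1$ and the $3$-shock profile), which follows directly from the wave-separation estimate \eqref{est:waveseparation}. All four inequalities then follow from the pointwise exponential decay in Lemma \ref{lem:vs}.

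\textbf{Step 1: Locate the support.} By the definition \eqref{def:cutoff}, $\phi_3$ is supported in $\{x \geq (\bold{X}_1(t) + \sigma_1 t)/2\}$ and $\phi_1$ is supported in $\{x \leq (\bold{X}_3(t) + \sigma_3 t)/2\}$. Using \eqref{est:waveseparation}, namely $\bold{X}_1(t)+\sigma_1 t \le \frac{\sigma_1}{2}t<0<\frac{\sigma_3}{2}t\le \bold{X}_3(t)+\sigma_3 t$, a direct algebraic manipulation yields, on $\mathrm{supp}\,\phi_3$,
\begin{equation*}
x-\sigma_1 t-\bold{X}_1(t) \;\ge\; \tfrac{\bold{X}_1(t)+\sigma_1 t}{2}-\sigma_1 t-\bold{X}_1(t) \;=\; -\tfrac{\bold{X}_1(t)+\sigma_1 t}{2} \;\ge\; -\tfrac{\sigma_1}{4}t \;=\;\tfrac{|\sigma_1|}{4}t,
\end{equation*}
and symmetrically on $\mathrm{supp}\,\phi_1$, $\sigma_3 t+\bold{X}_3(t)-x \ge \frac{\sigma_3}{4}t$. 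Thus on these supports, the arguments of the shock profiles are bounded below (in absolute value) by $ct$ for a constant depending only on the Riemann data.

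\textbf{Step 2: Pointwise bound.} Lemma \ref{lem:vs} gives $|(\widetilde{v}_1^{\bold{X}_1})_x(t,x)|\le C\delta_1^2 e^{-C\delta_1|x-\sigma_1 t-\bold{X}_1(t)|}$. Inserting the lower bound from Step 1 yields, on $\mathrm{supp}\,\phi_3$,
\begin{equation*}
|(\widetilde{v}_1^{\bold{X}_1})_x| \;\le\; C\delta_1^2\, e^{-C\delta_1\cdot \frac{|\sigma_1|}{4}t} \;\le\; C\delta_1^2\, e^{-C\delta_1 t},
\end{equation*}
absorbing $|\sigma_1|/4$ into $C$. Since $\phi_3\le 1$, this proves the first pointwise estimate; the second is analogous using $|(\widetilde{v}_3^{\bold{X}_3})_x|\le C\delta_3^2 e^{-C\delta_3|x-\sigma_3 t-\bold{X}_3(t)|}$.

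\textbf{Step 3: Integral bound by splitting the exponential.} For the $L^1$ estimates, I split
\begin{equation*}
e^{-C\delta_1 |x-\sigma_1 t-\bold{X}_1(t)|} \;=\; e^{-\frac{C\delta_1}{2}|x-\sigma_1 t-\bold{X}_1(t)|}\cdot e^{-\frac{C\delta_1}{2}|x-\sigma_1 t-\bold{X}_1(t)|}.
\end{equation*}
On $\mathrm{supp}\,\phi_3$ the second factor is bounded by $e^{-C\delta_1 t}$ (by Step 1), while the first factor is integrable over $\R$ with $\int_\R e^{-\frac{C\delta_1}{2}|y|}\,dy \le C/\delta_1$. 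Therefore
\begin{equation*}
\int_\R \phi_3\,|(\widetilde{v}_1^{\bold{X}_1})_x|\,dx \;\le\; C\delta_1^2\, e^{-C\delta_1 t}\!\int_\R e^{-\frac{C\delta_1}{2}|x-\sigma_1 t-\bold{X}_1(t)|}dx \;\le\; C\delta_1\, e^{-C\delta_1 t},
\end{equation*}
and the corresponding bound for $\phi_1|(\widetilde{v}_3^{\bold{X}_3})_x|$ follows by the same argument.

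\textbf{Main obstacle.} There is no significant technical difficulty here; the entire lemma is a direct bookkeeping consequence of the separation \eqref{est:waveseparation} combined with the exponential decay of viscous shock derivatives. The only point requiring a bit of care is the sign convention ($\sigma_1<0<\sigma_3$) when translating the cutoff support conditions into lower bounds on $|x-\sigma_i t-\bold{X}_i(t)|$; once that is done, the rest is immediate.
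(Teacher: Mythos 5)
Your proof is correct and follows essentially the same approach as the paper: localize the support of $\phi_3$ (resp.\ $\phi_1$) to $\{x \geq (\bold{X}_1+\sigma_1 t)/2\}$ (resp.\ $\{x \leq (\bold{X}_3+\sigma_3 t)/2\}$), use \eqref{est:waveseparation} to obtain $|x-\sigma_i t - \bold{X}_i(t)|\gtrsim t$ there, and then insert Lemma \ref{lem:vs}'s exponential decay. The only minor difference is in the $L^1$ bound: the paper reaches it via Cauchy--Schwarz against $\int|(\widetilde{v}_i^{\bold{X}_i})_x|\,dx\le C\delta_i$, while you split the exponential factor directly into a decaying-in-$t$ part and an integrable-in-$x$ part, which is a slightly more streamlined route to the same estimate.
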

\begin{proof}
The details are omitted, as the proof proceeds in the same manner as that of \cite[Lemma 3.3]{HKK23}.
\end{proof}

$\bullet$ Notations:
For simplicity of presentation, we shall employ the following notations in what follows.

\noindent
1. We will suppress the dependence on the shifts.
\[a(t,x):= a^{\bold{X}_1,\bold{X}_3}(t,x).\]
and for each $i=1, 3$,
\begin{align*}
&\widetilde{v}_i(t,x):=\widetilde{v}_i^{\bold{X}_i}(x-\sigma_it)=\widetilde{v}_i(x-\sigma_it -\bold{X}_i(t)), \\ 
&\widetilde{u}_i(t,x):=\widetilde{u}_i^{\bold{X}_i}(x-\sigma_it)=\widetilde{u}_i(x-\sigma_it -\bold{X}_i(t)),  \\
&\widetilde{\theta}_i(t,x):=\widetilde{\theta}_i^{\bold{X}_i}(x-\sigma_it)=\widetilde{\theta}_i(x-\sigma_it -\bold{X}_i(t)).
\end{align*}
2. In analyzing the evolution of the relative entropy, we adopt the notation $$U=(v, u, \theta)^t, \qquad \bar{U}=(\bar{v}, \bar{u}, \bar{\theta})^t,$$
where $U$ denotes the solution to the NSF system \eqref{eq:NSFS},
and $\bar{U}$ represents the composite wave consisting of a 1–viscous shock shifted by $\bold{X}_1$,
a 2–viscous contact wave, and a 3–viscous shock shifted by $\bold{X}_3$, as described below.

\begin{equation} \label{eq:CW}
	\bar{U}(t,x):=\begin{pmatrix}
		\bar{v}(t,x)\\
		\bar{u}(t,x)\\
		\bar{\theta}(t,x)
	\end{pmatrix}:=
	\begin{pmatrix}
		\widetilde{v}_1(t,x) + v^{D}(t,x) + \widetilde{v}_3(t,x)-v_* -v^*\\
		\widetilde{u}_1(t,x) + u^{D}(t,x) + \widetilde{u}_3(t,x)-u_*-u^*\\
		\widetilde{\theta}_1(t,x) + \theta^{D}(t,x) + \widetilde{\theta}_3(t,x)-\theta_* -\theta^*\\
	\end{pmatrix}.
\end{equation}

3. We adopt the following notations to make the computations presented after subsection \ref{leading order} more concise:
\begin{align*}
&v^{[1]}:=v_-, \ \theta^{[1]}:=\theta_-, \ p^{[1]}:=p_-,  \ and \  \sigma^{[1]}:=\sigma_-, \\
&v^{[3]}:=v^*, \ \theta^{[3]}:=\theta^*, \ p^{[3]}:=p^*,  \ and \  \sigma^{[3]}:=\sigma^*. \\
\end{align*}

\subsection{Relative entropy method}

Following \cite{KVW-NSF}, we introduce the relative entropy between $U$ and $\bar{U}$ by
\[
\eta(U \mid \bar{U})
= R\Phi\!\left(\frac{v}{\bar{v}}\right)
+ \frac{R}{\gamma-1}\Phi\!\left(\frac{\theta}{\bar{\theta}}\right)
+ \frac{(u-\bar{u})^2}{2\bar{\theta}},
\qquad
\Phi(z):=z-1-\ln z.
\]
Accordingly, the $\bar{\theta}$-weighted relative entropy is given by
\begin{equation}\label{eq:WRE}
	\bar{\theta}\,\eta(U \mid \bar{U})
	= R\bar{\theta}\Phi\!\left(\frac{v}{\bar{v}}\right)
	+ \frac{R\bar{\theta}}{\gamma-1}\Phi\!\left(\frac{\theta}{\bar{\theta}}\right)
	+ \frac{(u-\bar{u})^2}{2}.
\end{equation}

We next derive the evolution identity for the weighted relative entropy with weight $a(t,x)\bar{\theta}(t,x)$.
\begin{lemma} \label{lmREC}
Let $a$ be the weight function introduced in \eqref{def_a}, and let $U$ and $\bar{U}$ denote, respectively, the solution of \eqref{eq:NSFS} and the composite wave described in \eqref{eq:CW}.
Then we have
\begin{equation*} 
	\begin{aligned}
&{d \over dt}\int_{\mathbb{R}} a(t,x)\bar{\theta}\eta (U(t,x))|\bar{U}(t,x)) dx=\sum_{i\in\{1,3\}}\sX_i(t)\bold{Y}_i(U)+\mathcal{J}^{bad}(U)-\mathcal{J}^{good}(U),
\end{aligned}
\end{equation*}
where
\begin{equation*} \label{4.27} 
	\begin{split}
		\bold{Y}_i(U):=& -\int_{\mathbb{R}} (a_i)_x\bth \eta (U|\bar{U}) dx +\int_{\mathbb{R}} a\left[R(\widetilde{\theta}_i)_x\Phi\left({v\over \bar{v}}\right)+\frac{R}{\gamma-1}(\widetilde{\theta}_i)_x\Phi\left({\theta \over \bar{\theta}}\right) \right]dx  \\
		& + \int_{\mathbb{R}} a\left[\frac{(\widetilde{v}_i)_x\bar{p}}{\bar{v}}(v-\bar{v})+ \frac{R}{\gamma-1}\frac{(\widetilde{(\theta}_i)_x}{\bar{\theta}}(\theta -\bar{\theta})+(\widetilde{u}_i)_x(u-\bar{u})\right] dx, \\
		\mathcal{J}^{bad}(U):=&\int_{\mathbb{R}}a \left[R\left(-\sigma_1(\widetilde{\theta}_1)_x  +\theta^D_t-\sigma_3(\widetilde{\theta}_3)_x\right)\Phi\left({v\over \bar{v}}\right) - \frac{\bar{p}\bar{u}_x}{v\bar{v}}(v-\bar{v})^2 \right] dx \\
		& +\int_{\mathbb{R}} a\left[\frac{R}{\gamma-1}\left(-\sigma_1(\widetilde{\theta}_1)_x  +\theta^D_t-\sigma_3(\widetilde{\theta}_3)_x\right)\Phi\left({\theta \over \bar{\theta}}\right)  -\frac{\bar{u}_x}{\theta}(\theta-\bth)(p-\bar{p}) +\bar{p}\bar{u}_x\frac{(\theta-\bar{\theta})^2}{\theta\bar{\theta}}\right] dx    \\
		& +\int_{\mathbb{R}}  a\left[\mu \left(\frac{u_x}{v}-\frac{\bar{u}_x}{\bar{v}}\right)(u-\bar{u}) +\kappa\frac{\theta -\bar{\theta}}{\theta}\left(\frac{\theta_x}{v}-\frac{\bar{\theta}_x}{\bar{v}}\right)-(p-\bar{p})(u-\bar{u})\right]_xdx  \\
		& +\int_{\mathbb{R}} a\bigg[- \mu(u-\bar{u})_x\bar{u}_x\left(\frac{1}{v}-\frac{1}{\bar{v}}\right) + \kappa\frac{\theta-\bar{\theta}}{\theta^2}\theta_x\left(\frac{\theta_x}{v}-\frac{\bar{\theta}_x}{\bar{v}}\right)  - \kappa\frac{(\theta -\bar{\theta})_x}{\theta}\bar{\theta}_x\left(\frac{1}{v}-\frac{1}{\bar{v}}\right)  \\
		& \qquad \qquad +\mu\frac{\theta -\bar{\theta}}{\theta}\bar{\theta}_x\left(\frac{u^2_x}{v}-\frac{\bar{u}^2_x}{\bar{v}}\right) -(u-\bar{u})Q_1 -\left(\frac{\theta}{\bar{\theta}}-1\right)Q_2 \bigg]dx  \\ 
		& + \int_{\mathbb{R}} a \bigg[-\kappa\frac{(\theta-\bar{\theta})^2}{\theta \bar{\theta}}\left(\frac{\bar{\theta}_x}{\bar{v}}\right)_x -\mu\frac{(\theta-\bar{\theta})^2}{\theta \bar{\theta}}\frac{\bar{u}^2_x}{\bar{v}}\bigg] dx,  \\
		\mathcal{J}^{good}(U):=& \sum_{i\in\{1,3\}}\sigma_i\int_{\mathbb{R}}(a_i)_x\bar{\theta}\eta\left(U|\bar{U}\right)dx +\int_{\mathbb{R}}  a\left[\frac{\mu}{v} |(u-\bar{u})_x|^2 +\frac{\kappa}{v\theta}\left|(\theta-\bar{\theta})_x\right|^2 \right]dx.
		\end{split}
\end{equation*}
\end{lemma}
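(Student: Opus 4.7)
The plan is to differentiate $\int_\R a\,\bth\,\eta(U|\bar U)\,dx$ in time by the product rule, and then regroup the resulting expression into the three pieces $\sX_i\bold{Y}_i$, $\mathcal{J}^{bad}$, and $-\mathcal{J}^{good}$ claimed in the statement. Starting from the identity \eqref{eq:WRE}, I would split the weighted relative entropy into the three convex blocks
\begin{equation*}
R\bth\,\Phi(v/\bar v),\qquad \tfrac{R}{\gamma-1}\bth\,\Phi(\theta/\bth),\qquad \tfrac{1}{2}(u-\bar u)^2,
\end{equation*}
and differentiate each block separately, matching the contributions term by term against the structure of $\bold{Y}_i$, $\mathcal{J}^{bad}$, and $\mathcal{J}^{good}$.

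First I would compute $\partial_t a=-\sum_{i\in\{1,3\}}(\sigma_i+\sX_i)(a_i)_x$ directly from \eqref{def:a}. Multiplied by $\bth\eta(U|\bar U)$, the $-\sX_i(a_i)_x$ portion yields the first term of $\bold{Y}_i$, while the $-\sigma_i(a_i)_x$ portion gives the first dissipative term of $\mathcal{J}^{good}$ (nonpositive because $\sigma_i(a_i)_x>0$ in view of \eqref{eq:ax} and Lemma \ref{lem:vs}). Next, for the two $\Phi$-blocks, I would use $v_t=u_x$ from \eqref{eq:NSFS} together with $\bar v_t=\bar u_x-\sum_i\sX_i(\widetilde v_i)_x$ from \eqref{eq:NSFCW}, and expand $\bth_t$ via the energy equation in \eqref{eq:NSFCW} so as to isolate $-\sigma_1(\widetilde\theta_1)_x+\theta^C_t-\sigma_3(\widetilde\theta_3)_x$ plus $-\sum_i\sX_i(\widetilde\theta_i)_x$ and the remaining flux/source terms. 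The terms carrying $\sX_i$ collect into the $R(\widetilde\theta_i)_x\Phi(v/\bar v)$ and $\tfrac{R}{\gamma-1}(\widetilde\theta_i)_x\Phi(\theta/\bth)$ contributions in $\bold{Y}_i$; the wave-transport pieces collect into the corresponding $\Phi$-weighted terms in $\mathcal{J}^{bad}$; and the $\bar p\bar u_x$ cross terms produce both $-\bar p\bar u_x(v-\bar v)^2/(v\bar v)$ in $\mathcal{J}^{bad}$ and auxiliary cross terms to be cancelled by the kinetic block.

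For the kinetic block $(u-\bar u)^2/2$, I would subtract the momentum equations \eqref{eq:NSFS} and \eqref{eq:NSFCW}, multiply by $a(u-\bar u)$, and integrate by parts once in $x$. The shift input $\sX_i(\widetilde u_i)_x$ contributes the $(\widetilde u_i)_x(u-\bar u)$ piece of $\bold{Y}_i$; the viscous difference produces the good dissipation $a\tfrac{\mu}{v}|(u-\bar u)_x|^2$ together with the cross term $-\mu(u-\bar u)_x\bar u_x(1/v-1/\bar v)$ in $\mathcal{J}^{bad}$; the pressure difference, combined with the $\bar v_x=\sum_i(\widetilde v_i)_x+v^C_x$ splitting, generates the $\tfrac{(\widetilde v_i)_x\bar p}{\bar v}(v-\bar v)$ piece of $\bold{Y}_i$ and the divergence-form pressure terms in $\mathcal{J}^{bad}$, plus $-(u-\bar u)Q_1$. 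The same scheme applied to the temperature block, with multiplier $\tfrac{R}{\gamma-1}(1-\bth/\theta)$ coming from $\partial_t\Phi(\theta/\bth)$, produces $a\tfrac{\kappa}{v\theta}|(\theta-\bth)_x|^2$, the $\tfrac{R}{\gamma-1}\tfrac{(\widetilde\theta_i)_x}{\bth}(\theta-\bth)$ piece of $\bold{Y}_i$, the heat-conduction/viscous-heating cross terms in $\mathcal{J}^{bad}$, and $-(\theta/\bth-1)Q_2$.

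The main obstacle will be careful bookkeeping of signs and coefficients across these many integration-by-parts steps, so that after all cancellations the $\sX_i$ factors assemble exactly into the $\bold{Y}_i$ that drives the shift ODE \eqref{def:shift}, while the nonpositive residual is precisely the stated $\mathcal{J}^{good}$. Special care is needed for the last two terms of $\mathcal{J}^{bad}$ of the form $-a(\theta-\bth)^2/(\theta\bth)\cdot(\bth_x/\bar v)_x$ and $-a(\theta-\bth)^2/(\theta\bth)\cdot\bar u_x^2/\bar v$, which emerge only after one uses the composite-wave energy equation \eqref{eq:NSFCW} to replace $\bth_t$ inside the $\bth$-prefactors of the $\Phi$-blocks and then regroups the resulting quadratic remainders. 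No new inequality is required—everything rests on the chain rule, integration by parts, the two systems \eqref{eq:NSFS} and \eqref{eq:NSFCW}, and the definition of $\Phi$.
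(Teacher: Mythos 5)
Your overall plan coincides with the paper's proof: apply the product rule to $\int_\R a\,\bth\,\eta(U|\bar U)\,dx$, split the integrand via \eqref{eq:WRE} into the three convex blocks $R\bth\Phi(v/\bar v)$, $\tfrac{R}{\gamma-1}\bth\Phi(\theta/\bth)$, and $\tfrac12(u-\bar u)^2$, compute $\partial_t a=-\sum_{i\in\{1,3\}}(\sigma_i+\sX_i)(a_i)_x$ to peel off the $(a_i)_x$ terms, and differentiate each block using the difference system \eqref{eq:DNSF} together with the ansatz system \eqref{eq:NSFCW}. That is exactly the paper's route and the stated identity would follow.

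Two of your attributions are inaccurate, though. First, the outer $\bth_t$ (from differentiating the prefactor $\bth$ multiplying the two $\Phi$-blocks) is \emph{not} expanded through the energy equation $\eqref{eq:NSFCW}_3$; doing so would bring in the flux terms $\left(\kappa\tfrac{\bth_x}{\bar v}\right)_x+\mu\tfrac{\bar u_x^2}{\bar v}+Q_2$ and would not produce the explicit $\theta^C_t$ that appears in $\mathcal{J}^{bad}$. Instead, the paper simply applies the chain rule to the definition of $\bth$ in \eqref{eq:CW}: $\bth_t=-\sigma_1(\widetilde\theta_1)_x-\dot{\bX}_1(\widetilde\theta_1)_x+\theta^C_t-\sigma_3(\widetilde\theta_3)_x-\dot{\bX}_3(\widetilde\theta_3)_x$, which directly delivers both the $\theta^C_t$-weighted bad terms and the $(\widetilde\theta_i)_x\,\Phi$ pieces of $\bold{Y}_i$. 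Your phrase ``and the remaining flux/source terms'' signals that you expected extra terms here; there are none. Second, you attribute the piece $\int a\,\tfrac{(\widetilde v_i)_x\bar p}{\bar v}(v-\bar v)\,dx$ of $\bold{Y}_i$ to the pressure term in the momentum equation, but the momentum equation only carries $\dot{\bX}_i(\widetilde u_i)_x$. That piece actually comes from $\partial_t\Phi(v/\bar v)$: substituting $\eqref{eq:DNSF}_1$ and $\eqref{eq:NSFCW}_1$ into $(v/\bar v)_t$ produces the $\dot{\bX}_i(\widetilde v_i)_x$ contribution, and the algebraic identity $R\bth\left(\tfrac1{\bar v}-\tfrac1v\right)\left(1-\tfrac v{\bar v}\right)=\tfrac{\bar p}{\bar v}(v-\bar v)+O(|v-\bar v|^3)$ (used exactly, not asymptotically, in \eqref{eqdv}) yields the stated form. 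Both errors are recoverable when you actually push the computation through, but the mental map you describe would lead you to look for terms in the wrong place.
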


\begin{remark} Since $\sigma_ia'_i>0$,  $-\mathcal{J}^{good}$ consists of good terms,  while $\mathcal{J}^{bad}$ consists of bad terms.
\end{remark}
\begin{proof}
Since the proof is almost the same as \cite[Lemma 4.3]{KVW-NSF}, we refer to this literature for the detailed proof. 
\end{proof}

\subsection{Decompositions}
First, we separate the first and second terms of $\mathcal{J}^{bad}$ into the principal term $\bold{B}_1$ and the remaining higher-order parts.
\begin{equation}\label{defb1}
	\begin{aligned}
	 \bold{B}_1(U):=  &\sum\limits_{i\in \{1, 3\}}\int_{\mathbb{R}} a(\widetilde{v}_i)_x\left[ \frac{(\gamma+1)p^{[i]}\sigma^{[i]}}{2(v^{[i]})^2} |v-\bar{v}|^2 + \frac{\sigma^{[i]}R}{2v^{[i]}\theta^{[i]}}(\theta-\bth)^2-\frac{\sigma^{[i]}p^{[i]}}{2v^{[i]}\theta^{[i]}}(v-\bar{v})(\theta-\bth)\right] dx.\\
\end{aligned}
\end{equation}
We begin by analyzing the first term in $\mathcal{J}^{bad}$:
\[\int_{\mathbb{R}} a   \underbrace{\left[R(-\sigma_1\left((\widetilde{\theta}_1)_x  +\theta^D_t-\sigma_3(\widetilde{\theta}_3)_x\right)\Phi\left({v\over \bar{v}}\right) - \frac{\bar{p}\bar{u}_x}{v\bar{v}}(v-\bar{v})^2 \right] }_{=:J_2}dx. \]
Noting that $\bar{u}_x = (\widetilde{u}_1)_x + u_x^C + (\widetilde{u}_3)_x$, we obtain
\begin{align*}
 \begin{aligned}
{J}_2   =&  \underbrace{ -R\sigma_1(\widetilde{\theta}_1)_x\Phi\left({v\over \bar{v}}\right) - \frac{\bar{p}(\widetilde{u}_1)_x}{v\bar{v}}(v-\bar{v})^2}_{=:J_{21}} \\
&+ \underbrace{R\theta^D_t\Phi\left({v\over \bar{v}}\right)- \frac{\bar{p}u_x^C}{v\bar{v}}(v-\bar{v})^2}_{=:J_{22}}  
\underbrace{-R\sigma_3(\widetilde{\theta}_3)_x\Phi\left({v\over \bar{v}}\right) - \frac{\bar{p}(\widetilde{u}_3)_x}{v\bar{v}}(v-\bar{v})^2}_{=:J_{23}}      
 \end{aligned}
\end{align*}
Since $\Phi(1)=\Phi'(1)=0$ and $\Phi''(1)=1$, a Taylor expansion of $\Phi(z)$ at $z=1$ yields
\begin{align}\label{eq:Phiexpan}
	\Phi\left({v\over \bar{v}}\right) = \frac{(v-\bar{v})^2}{2\bar{v}^2} + O(|v-\bar{v}|^3).
\end{align}

Using the above relation together with $\eqref{eq:VCD}_3$, it follows that
\[J_{22} \leq C\left(|u_x^C|  + |\theta_{xx}^C| + |\theta_x^C||v_x^C| + |Q_2^C|\right)|v-\bar{v}|^2,\]

which, combined with \eqref{est:vcdproperty} and \eqref{est:QC}, implies
\[J_{22} \leq C\delta_C(1+t)^{-1}e^{-\frac{C_1|x|^2}{1+t}}(v-\bar{v})^2.\]

By applying \eqref{est:vu}, \eqref{est:vtheta}, \eqref{est:sigma}, and 
\begin{equation}\label{est:pp}
	\begin{split}
		|\bar{p}-p^{[1]}| & \leq C\left(|\bar{v}-v^{[1]}|+ |\bth-\theta^{[1]}|\right)  \\
		& \leq C\left(|\widetilde{v}_1- v_-| + |v^D-v_*| + |\widetilde{v}_3-v^*| + |\widetilde{\theta}_1- v_-| + |\theta^D-v_*| + |\widetilde{\theta}_3-v^*| \right),  \\
		& \leq C(\delta_1 + \delta_C + \delta_3) \leq C\delta_0,
	\end{split}
\end{equation}
combining this with \eqref{eq:Phiexpan}, we obtain
\[J_{21} \leq \frac{(\gamma+1)p^{[1]}\sigma^{[1]}}{2(v^{[1]})^2}(\widetilde{v}_1)_x |v-\bar{v}|^2 + C(\delta_0 + \varepsilon)(\widetilde{v}_1)_x |v-\bar{v}|^2.\]

Likewise,
\[J_{23} \leq \frac{(\gamma+1)p^{[3]}\sigma^{[3]}}{2(v^{[3]})^2}(\widetilde{v}_3)_x |v-\bar{v}|^2 + C(\delta_0 + \varepsilon)(\widetilde{v}_3)_x |v-\bar{v}|^2.\]

Analogously, we treat the second contribution of $\mathcal{J}^{bad}$:
\[\int_{\mathbb{R}} a\underbrace{\left[\frac{R}{\gamma-1}\left(-\sigma_1(\widetilde{\theta}_1)_x  +\theta^D_t-\sigma_3(\widetilde{\theta}_3)_x\right)\Phi\left({\theta \over \bar{\theta}}\right)  -\frac{\bar{u}_x}{\theta}(\theta-\bth)(p-\bar{p}) +\bar{p}\bar{u}_x\frac{(\theta-\bar{\theta})^2}{\theta\bar{\theta}}\right]}_{=:J_3} dx \]

Using the identity 
\begin{equation}\label{eq:pp}
p-\bar{p} = \frac{R}{v}(\theta-\bth) - \frac{\bar{p}}{v}(v-\bar{v}),
\end{equation}
it follows that
\begin{align*}
	J_3 =&  \sum\limits_{i\in \{1, 3\}}\left[-\frac{R\sigma_i}{\gamma-1}(\widetilde{\theta}_i)_x\Phi\left({\theta \over \bar{\theta}}\right) -(\widetilde{u}_i)_x\frac{\theta-\bth}{\theta}\left(\frac{R}{v}(\theta-\bth) - \frac{\bar{p}}{v}(v-\bar{v})\right)+ \bar{p}(\widetilde{u}_i)_x\frac{(\theta-\bar{\theta})^2}{\theta\bar{\theta}}\right]  \\
	&  + \frac{R}{\gamma-1}(\theta^D)_t\Phi\left({\theta \over \bar{\theta}}\right) -(u^D)_x\frac{\theta-\bth}{\theta}\left(\frac{R}{v}(\theta-\bth) - \frac{\bar{p}}{v}(v-\bar{v})\right)+ \bar{p}(u^D)_x\frac{(\theta-\bar{\theta})^2}{\theta\bar{\theta}} \\
	\leq & \ \sum\limits_{i\in \{1, 3\}}\frac{\sigma^{[i]}(\widetilde{v}_i)_x}{2v^{[i]}\theta^{[i]}}\left(R(\theta-\bth)-2p^{[i]}(v-\bar{v})\right)(\theta-\bth) \\
	& +C\delta_C(1+t)^{-1}e^{-\frac{C_1|x|^2}{1+t}}|(v-\bar{v},\theta-\bth)|^2 +C(\delta_0 + \varepsilon)(|(\widetilde{v}_1)_x| + |(\widetilde{v}_3)_x|)|(v-\bar{v},\theta-\bth)|^2 .
	\end{align*}
	
Therefore, by combining the above estimates with the relation $\bar{p}= \frac{R\bth}{\bar{v}}$, we obtain
	\begin{align*}
		& \int_{\mathbb{R}} a(J_2 + J_3) dx   \\
		\leq & \ \sum\limits_{i\in \{1, 3\}}\int_{\mathbb{R}} a(\widetilde{v}_i)_x\left[ \frac{(\gamma+1)p^{[i]}\sigma^{[i]}}{2(v^{[i]})^2} |v-\bar{v}|^2 + \frac{\sigma^{[i]}R}{2v^{[i]}\theta^{[i]}}(\theta-\bth)^2-\frac{\sigma^{[i]}p^{[i]}}{2v^{[i]}\theta^{[i]}}(v-\bar{v})(p-\bth)\right] dx  \\
		& \ + C\delta_C(1+t)^{-1} \int_{\mathbb{R}} ae^{-\frac{C_1|x|^2}{1+t}}|(v-\bar{v},\theta-\bth)|^2 dx \\ 
        & \ +  C(\delta_0 + \varepsilon)\int_{\mathbb{R}}(|(\widetilde{v}_1)_x| + |(\widetilde{v}_3)_x|)|(v-\bar{v},\theta-\bth)|^2  dx.
	\end{align*}

Therefore, applying Lemma \ref{lmREC}, we deduce that
	\begin{equation}\label{est_RE}
	\begin{aligned}
		{d \over dt}& \int_{\mathbb{R}}a\bar{\theta}\eta\left(U|\bar{U}\right)dx \\
		&\leq  \sum_{i\in\{1,3\}}\bold{X}_i(t)\bold{Y}_i(U) + \sum_{i=1}^{5}\bold{B}_i + \bold{S}_1 + \bold{S}_2 -\bold{G}(U) -\bold{D}(U),
	\end{aligned}
	\end{equation}
	where $\bold{B}_1$ is as in \eqref{defb1}, and
	\begin{align*}
		\begin{aligned}
			& \bold{B}_2 :=\int_{\mathbb{R}} a_{x}(u-\bar{u})(p-\bar{p})dx,   \\
			&  \bold{B}_3 := -\int_{\mathbb{R}}  a_x\left[\mu \left(\frac{u_x}{v}-\frac{\bar{u}_x}{\bar{v}}\right)(u-\bar{u}) +\kappa\frac{\theta -\bar{\theta}}{\theta}\left(\frac{\theta_x}{v}-\frac{\bar{\theta}_x}{\bar{v}}\right)\right]dx,  \\
			& \bold{B}_4 :=\int_{\mathbb{R}} a\bigg[- \mu(u-\bar{u})_x\bar{u}_x\left(\frac{1}{v}-\frac{1}{\bar{v}}\right) + \kappa\frac{\theta-\bar{\theta}}{\theta^2}\theta_x\left(\frac{\theta_x}{v}-\frac{\bar{\theta}_x}{\bar{v}}\right) \\
			& \qquad\qquad \qquad -\kappa\frac{(\theta -\bar{\theta})_x}{\theta}\bar{\theta}_x\left(\frac{1}{v}-\frac{1}{\bar{v}}\right)  +\mu\frac{\theta -\bar{\theta}}{\theta}\bar{\theta}_x\left(\frac{u^2_x}{v}-\frac{\bar{u}^2_x}{\bar{v}}\right)\bigg]\,dx,  \\ 
            &\bold{B}_5 :=-\int_{\mathbb{R}} a \bigg[\kappa\frac{(\theta-\bar{\theta})^2}{\theta \bar{\theta}}\left(\frac{\bar{\theta}_x}{\bar{v}}\right)_x +\mu\frac{(\theta-\bar{\theta})^2}{\theta \bar{\theta}}\frac{\bar{u}^2_x}{\bar{v}}\bigg] dx, \\
			 & \bold{B}_6 :=C\delta_C(1+t)^{-1} \int_{\mathbb{R}} ae^{-\frac{C_1|x|^2}{1+t}}|(v-\bar{v},\theta-\bth)|^2 dx   \\
			 &\qquad\quad+  C(\delta_0 + \varepsilon)\int_{\mathbb{R}}(|(\widetilde{v}_1)_x| + |(\widetilde{v}_3)_x|)|(v-\bar{v},\theta-\bth)|^2  dx, \\
			 & \bold{S}_1:= -\int_{\mathbb{R}} a (u-\bar{u})Q_1 dx, \qquad\qquad  \bold{S}_2:= -\int_{\mathbb{R}} a\left(\frac{\theta}{\bar{\theta}}-1\right)Q_2 dx, \\
			 &\bold{G}(U):= \sum_{i\in\{1,3\}}\sigma_i\int_{\mathbb{R}}(a_i)_x\bar{\theta}\eta\left(U|\bar{U}\right)dx,   \\
			 & \bold{D}(U):=\int_{\mathbb{R}}  a\left[\frac{\mu}{v} |(u-\bar{u})_x|^2 +\frac{\kappa}{v\theta}\left|(\theta-\bar{\theta})_x\right|^2 \right]dx.
		\end{aligned}
	\end{align*}
For each $i=1,3$, we decompose the functional $\bold{Y}_i$ in the form,
	\begin{equation*}
	 \bold{Y}_i :=\sum_{j=1}^{6}\bold{Y}_{ij},   
	\end{equation*}
	where
	\begin{align*}
		\begin{split}
		&	\bold{Y}_{i1} := \int_{\mathbb{R}} a(\widetilde{u}_i)_x(u-\bar{u})dx,  \qquad 	\quad \quad \bold{Y}_{i2} := \int_{\mathbb{R}} a\frac{(\widetilde{v}_i)_x\bar{p}}{\bar{v}}(v-\bar{v})dx, \\
			&	\bold{Y}_{i3} := \int_{\mathbb{R}} a\frac{R}{\gamma-1}\frac{(\widetilde{\theta}_i)_x}{\bth}(\theta-\bth) dx,  \qquad 	\bold{Y}_{i4} := \int_{\mathbb{R}} a R(\widetilde{\theta}_i)_x\Phi\left(\frac{v}{\bar{v}}\right) dx,  \\
			&	\bold{Y}_{i5} := \int_{\mathbb{R}} a\frac{R}{\gamma-1}(\widetilde{\theta}_i)_x\Phi\left(\frac{\theta}{\bth}\right) dx, \qquad \bold{Y}_{i6} := -\int_{\mathbb{R}} (a_i)_x\bth \eta\left(U(t,x)|\bar{U}(t,x)\right)dx .
		\end{split}
	\end{align*}
It follows from the definition \eqref{def:shift} that
	\begin{align}\label{eq:decomposeX}
	\dot{\bold{X}}_i(t) = -\frac{M_i}{\delta_i}(\bold{Y}_{i1} +\bold{Y}_{i2}+ \bold{Y}_{i3} ),
	\end{align}
	which yields
	\begin{align}\label{eq_XY}
		\dot{\bold{X}}_i(t) \bold{Y}_i = -\frac{\delta_i}{M_i}|\dot{\bold{X}}_i|^{2} + \dot{\bold{X}}_i \sum_{j=4}^{6}\bold{Y}_{ij}.
	\end{align}
\subsection{Dominant term estimates}\label{leading order}
In this subsection, we deal with important terms in a sharp way. The main goal is to prove the below Lemma.
    
	\begin{lemma}\label{lma_part1}
	    There exists $C>0$ such that 
	    \begin{align*}
	    	\begin{aligned}
	   & -\sum\limits_{i\in \left\{1, 3\right\}}\frac{\delta_i}{2M_i}|\dot{\bold{X}_i}|^{2}+ \bold{B}_1 + \bold{B}_2 -\bold{G} -\frac{3}{4}\bold{D}  \\
	    \leq & C\sum_{i\in\{1,3\}}\mathcal{G}_i^{S} + \delta_1^{\frac{4}{3}}\left(\delta_3^{\frac{4}{3}} e^{-C\delta_3 t}  + \delta_C^{\frac{4}{3}}e^{-Ct}\right) +  \delta_3^{\frac{4}{3}}\left(\delta_1^{\frac{4}{3}} e^{-C\delta_1 t}  + \delta_C^{\frac{4}{3}}e^{-Ct}\right) \\
	   & +  \left(\sum_{i\in\{1,3\}}C\delta^2_i\exp(-C\delta_it) + \frac{C}{\delta_*t^2}\right)\int_{\mathbb{R}}\eta\left(U|\bar{U}\right)dx,
	   \end{aligned}
	    \end{align*}
	  where 
	  \[\mathcal{G}_i^{S}= \int_{\mathbb{R}} \left|(\widetilde{v}_i)_x\right| \left|\phi_i (v-\bar{v}, u-\bar{u},\theta-\bth )\right|^2 dx.\]
	\end{lemma}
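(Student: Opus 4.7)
The plan is to localize the left-hand side at each viscous shock via the cutoffs $\phi_1+\phi_3=1$, so that the estimate decouples into two single-shock $a$-contraction estimates of the Kang-Vasseur-Wang type, up to exponentially small wave-interaction remainders. Concretely, I would insert $\phi_1+\phi_3=1$ inside $\bold{B}_1$, $\bold{B}_2$ and $\bold{G}$, and use that on the support of $1-\phi_i$ both $|(\widetilde v_i)_x|$ and $|(a_i)_x|$ decay exponentially in $t$ (Lemma \ref{lem:cutoffinteraction} and \eqref{eq:ax}), so that after a Cauchy-Schwarz against $\|(v-\bar v, u-\bar u, \theta-\bar\theta)\|_{L^\infty}\leq\varepsilon$ and Lemma \ref{lm:interaction} they contribute precisely the $\delta_1^{4/3}\delta_3^{4/3}e^{-C\delta_i t}$ and $\delta_i^{4/3}\delta_C^{4/3}e^{-Ct}$ terms on the right-hand side of the claim.

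The on-wave part at the $i$-th shock is then the heart of the proof. I would change variables via $y_i\in[0,1]$ so that $(\widetilde v_i)_x\,dx$ becomes $\delta_i\,dy_i$, expand $\bar v, \bar\theta, \bar p$ around the constants $v^{[i]}, \theta^{[i]}, p^{[i]}$ (incurring only $O(\delta_0)$ errors absorbable into $\mathcal{G}_i^S$ thanks to \eqref{est:pp} and \eqref{est:vu}--\eqref{est:vtheta}), and use the identity $\dot{\bold{X}}_i=-(M_i/\delta_i)(\bold Y_{i1}+\bold Y_{i2}+\bold Y_{i3})$ from \eqref{eq:decomposeX} to recombine $-\delta_i/(2M_i)|\dot{\bold X}_i|^2$ with the linear-in-perturbation parts of $\bold B_1$ and $\bold B_2$. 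The explicit value $M_i=\tfrac{3}{2}\alpha_i(\sigma^{[i]})^{-2}\bigl(1+\tfrac{2\kappa(\gamma-1)^2}{\mu R\gamma}\bigr)$ is exactly the constant for which the subsequent Young inequality, combined with the Poincar\'e inequality of Lemma \ref{Poincare} applied on $y_i\in[0,1]$, closes the quadratic form so that $-\bold G^{(i)}-\tfrac{1}{4}\bold D^{(i)}$ dominates the on-wave parts of $\bold B_1+\bold B_2$ modulo an allowed multiple of $\mathcal{G}_i^S$.

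The main obstacle will be establishing this sharp quadratic form identity, since $\bold B_2=\int a_x(u-\bar u)(p-\bar p)\,dx$ does not lie on the same $(v,\theta)$-diagonal as $\bold B_1$. I would split $p-\bar p=(R/v)(\theta-\bar\theta)-(\bar p/v)(v-\bar v)$ and use $|a_x|\sim(\lambda_i/\delta_i)|(\widetilde v_i)_x|$ from \eqref{est:ax}; the $(v-\bar v)$ part enters the $\bold B_1$ diagonal with a relative weight $\lambda_i\ll 1$ that preserves positivity, while the $(\theta-\bar\theta)$ part is tamed by a Young inequality against a small fraction of $\tfrac{1}{4}\bold D$. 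Finally, the contact-wave residue in $\bold B_1$ of size $\delta_C(1+t)^{-1}e^{-C_1 x^2/(1+t)}|(v-\bar v,\theta-\bar\theta)|^2$ is absorbed into $C\delta_C(1+t)^{-1}\int \eta(U|\bar U)\,dx$; after bounding $(1+t)^{-1}$ by $1/(\delta_\ast t^2)$ in the tail regime (and by $\delta_i^2 e^{-C\delta_i t}$ in the short-time regime where the shock exponentials dominate), this yields precisely the $\bigl(\sum_i C\delta_i^2 e^{-C\delta_i t}+C/(\delta_\ast t^2)\bigr)\int\eta\,dx$ term of the conclusion.
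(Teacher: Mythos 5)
Your broad strategy — localize via the cutoffs $\phi_1,\phi_3$, change variables to $y_i,w_i$, invoke the Poincar\'e inequality of Lemma \ref{Poincare}, and use \eqref{eq:decomposeX} together with the exact constant $M_i$ — agrees with the paper's proof. But the step you describe for $\bold{B}_2-\bold{G}$, which is the algebraic heart of the lemma, contains a genuine gap. You propose to split $p-\bar p=(R/v)(\theta-\bar\theta)-(\bar p/v)(v-\bar v)$ inside $\bold{B}_2=\int a_x(u-\bar u)(p-\bar p)\,dx$, fold the $(v-\bar v)$ factor into the ``$\bold{B}_1$ diagonal'', and dispose of the $(\theta-\bar\theta)$ factor by Young against a fraction of $\bold{D}$. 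Neither step works: $\bold{B}_1$ is weighted by $a(\widetilde{v}_i)_x$ while $\bold{B}_2$ is weighted by $a_x\sim(\lambda_i/\delta_i)(\widetilde v_i)_x$, so the two pieces do not assemble into a common quadratic form; and a Young inequality on $\int a_x(u-\bar u)(\theta-\bar\theta)\,dx$ produces zeroth-order quantities in $(u-\bar u,\theta-\bar\theta)$ that $\bold{D}$ — which controls only $x$-derivatives — cannot absorb. The paper instead uses the $\bold{G}$ term itself: the pointwise combination $(u-\bar u)(p-\bar p)-\sigma_i\bth\,\eta(U|\bar U)$, after expansion around $(v^{[i]},\theta^{[i]})$ and the identity $\sigma^{[i]}=\sqrt{\gamma R\theta^{[i]}}/v^{[i]}$, \emph{completes the square} into a negative form in the two transversal variables $(v-\bar v)-\tfrac{u-\bar u}{\sigma^{[i]}}$ and $(\theta-\bar\theta)+\tfrac{(\gamma-1)\theta^{[i]}}{\sigma^{[i]}v^{[i]}}(u-\bar u)$, yielding the good terms $-\bG_{1i},-\bG_{2i}$ in \eqref{est:BG}. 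These are precisely what absorb the off-diagonal pieces of $\bold{B}_1$, the excess in $\bold{B}_{new}$, and the error in the $\dot{\bold{X}}_i$ expansion; without them, the estimate does not close.

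A secondary omission: Poincar\'e applied to $\widetilde{\bold{D}}_{u_1}$ alone gives a good term with coefficient $\tfrac{\mu R\gamma}{\mu R\gamma+\kappa(\gamma-1)^2}\,\delta_i\alpha_i$, which is strictly smaller than what is needed to absorb $\bold{B}_1$ and degenerates as $\gamma\to\infty$. The paper must also apply Poincar\'e inside $\widetilde{\bold{D}}_{\theta_1}$ to $\phi_i(\theta-\bth)$, then trade $\phi_i(\theta-\bth)$ for $-\tfrac{(\gamma-1)\theta^{[i]}}{\sigma^{[i]}v^{[i]}}w_i$ up to a $\bG_{2i}$-controlled error, exactly recovering the missing factor $1+\tfrac{\kappa(\gamma-1)^2}{\mu R\gamma}$. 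You correctly read this factor off $M_i$, but the mechanism — transferring temperature dissipation into a velocity Poincar\'e gain — is what actually makes the quadratic form close, and ``Young against a fraction of $\tfrac14\bold{D}$'' does not substitute for it. Finally, a small misattribution: the $\delta_C(1+t)^{-1}e^{-C_1x^2/(1+t)}$ residue is not part of $\bold{B}_1$ as defined in \eqref{defb1}; it lives in $\bold{B}_6$ and is treated in the proof of Lemma \ref{lm_zeroorder}, not in Lemma \ref{lma_part1}.
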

		First, for any fixed $t\in \left[0, T\right]$, we define new variables $y_1, y_3$ and $w_1, w_3$ as below:
		\begin{align*}
			\begin{aligned}
				y_1 := -{\tilde{v}_1(x-\sigma_1 t-\bX_1(t))-v^{[1]}\over \delta_1} \qquad \text{and} \qquad y_3 := {\tilde{v}_3(x-\sigma_3 t-\bX_3(t))-v^{[3]}\over \delta_3}.
			\end{aligned}
		\end{align*}
		In fact, for $i=1, 3$, $y_i: \mathbb{R} \rightarrow (0,1)$ is strictly increasing in $\xi_i = x-\sigma_i t- \bX_i(t)$ since
		\begin{align}\label{eq:dy}
			{dy_1\over d\xi_1} = -\frac{(\widetilde{v}_1)_{x}}{\delta_1} > 0, \qquad {dy_3\over d\xi_3} = \frac{(\widetilde{v}_3)_{x}}{\delta_3} > 0.
		\end{align}
		Besides,
		\[\lim_{\xi_i \rightarrow -\infty} y_i = 0, \qquad  \lim_{\xi_i \rightarrow +\infty} y_i = 1. \]
		In the new variables, we apply Lemma \ref{Poincare} to each perturbation $w_i$ localized by $\phi_i$:
		\begin{align*}
			& w_1 := \phi_1(t, x)\bigg[u(t,x)- \bigg(\tilde{u}_1(x-\sigma_1 t-\bX_1(t)) + u^D(t,x)+ \tilde{u}_3\left(x-\sigma_3t- \bX_3(t)\right) -u_* -u^*\bigg)\bigg],   \\
			& w_3 := \phi_3(t, x)\bigg[u(t,x)- \bigg
			(\tilde{u}_1(x-\sigma_1 t- \bX_1(t) ) + u^D(t,x)+\tilde{u}_3(x-\sigma_3t-\bX_3(t)) -u_* -u^*\bigg)\bigg].  \\
		\end{align*}
	
		\noindent$\bullet$ (Estimate on  $\bold{B}_2-\bold{G}$):

		At first, by \eqref{eq:WRE} and \eqref{eq:pp}, we have 
		\begin{align*}
			\begin{aligned}
		& (u-\bar{u})(p-\bar{p})-\sigma_1 \bar{\theta}\eta(U | \bar{U}) \\
		 =&(u-\bar{u})\left(\frac{R}{v}(\theta-\bth) - \frac{\bar{p}}{v}(v-\bar{v})\right) -\sigma_1\left[R\bar{\theta}\Phi\left({v\over \bar{v}}\right) + \frac{R\bar{\theta}}{\gamma-1}\Phi\left({\theta \over \bar{\theta}}\right) + {(u-\bar{u})^{2}\over 2}\right] .
\end{aligned}
\end{align*}
By virtue of \eqref{est:sigma}, \eqref{eq:Phiexpan}, and \eqref{est:pp}, we have 
	\begin{align*}
	\begin{aligned}
& (u-\bar{u})(p-\bar{p})-\sigma_1 \bar{\theta}\eta(U | \bar{U}) \\
 \leq & (u-\bar{u})\left[\frac{R}{v^{[1]}}(\theta-\bth) - \frac{p^{[1]}}{v^{[1]}}(v- \bar{v})\right] + \frac{R\theta^{[1]}\sigma^{[1]}}{2(v^{[1]})^2} |v-\bar{v}|^2 + \frac{R\sigma^{[1]}}{2(\gamma-1)\theta^{[1]}}(\theta-\bth)^2  \\
& + \frac{\sigma^{[1]}}{2}(u-\bar{u})^2 + C\left(|v-\bar{v}| + |\bar{v}-v^{[1]}| + |\bth-\theta^{[1]}|\right)\cdot|(v-\bar{v}, u-\bar{u}, \theta-\bth)|^2  \\
=& \frac{R\theta^{[1]}\sigma^{[1]}}{2(v^{[1]})^2} \left[(v-\bar{v}) - \frac{u-\bar{u}}{\sigma^{[1]}}\right]^2 + \frac{R\sigma^{[1]}}{2(\gamma-1)\theta^{[1]}}\left[(\theta-\bth)+ \frac{(\gamma-1)\theta^{[1]}}{\sigma^{[1]}v^{[1]}}(u-\bar{u})\right]^2    \\
& + C\left(|v-\bar{v}| +\delta_1 +  |(\widetilde{v}_3-v^{[3]},\widetilde{\theta}_3-\theta^{[3]})| + |(v^D-v_*, \theta^D-\theta_*)|\right)\cdot|(v-\bar{v},u-\bar{u},\theta-\bth)|^2  ,\\
\end{aligned}
\end{align*}
where the last equality is obtained by applying
		\[ \frac{R\theta^{[1]}\sigma^{[1]}}{2(v^{[1]})^2}  +\frac{R\sigma^{[1]}}{2(\gamma-1)\theta^{[1]}}= \frac{R\gamma\theta^{[1]}}{2\sigma^{[1]}(v^{[1]})^2}  =\frac{\sigma^{[1]}}{2} \qquad \text{with}\qquad  \sigma^{[1]} = \frac{\sqrt{\gamma R\theta^{[1]}}}{v^{[1]}}.\] 
Similarly, we have
		\begin{equation*}
			\begin{aligned}
				& (u-\bar{u})(p-\bar{p})-\sigma_3 \bar{\theta}\eta(U | \bar{U}) \\
				\leq & \frac{R\theta^{[3]}\sigma^{[3]}}{2(v^{[3]})^2} \left[(v-\bar{v}) - \frac{u-\bar{u}}{\sigma^{[3]}}\right]^2 + \frac{R\sigma^{[3]}}{2(\gamma-1)\theta^{[3]}}\left[(\theta-\bth)+ \frac{(\gamma-1)\theta^{[3]}}{\sigma^{[3]}v^{[3]}}(u-\bar{u})\right]^2    \\
				& + C\left(|v-\bar{v}| +\delta_3 +  |(\widetilde{v}_1-v^{[1]},\widetilde{\theta}_1-\theta^{[1]})| + |(v^D-v^*, \theta^D-\theta^*)|\right)\cdot|(v-\bar{v},u-\bar{u},\theta-\bth)|^2.\\
			\end{aligned}
		\end{equation*}
As a consequence, we have
		\begin{align}
        \begin{aligned}\label{est:BG}
			\bold{B}_2(U) - \bG(U) \leq &-\sum_{i\in\{1,3\}}\bG_{1i}(U) -\sum_{i\in\{1,3\}}\bG_{2i}(U) + \bB_{new}(U) \\ 
            &+  C\sum_{i\in\{1,3\}} \delta_i\sqrt{\delta_i}\exp(-C\delta_it) \int_{\mathbb{R}}\eta\left(U|\bar{U}\right)dx,
		\end{aligned}
        \end{align}
		where 
		\begin{align*}
			\begin{aligned}
				& \bG_{1i}(U) := \frac{R\theta^{[i]}\sigma^{[i]}}{2(v^{[i]})^2}  \int_{\mathbb{R}} |(a_i)_x| \phi_i \left[(v-\bar{v}) - \frac{u-\bar{u}}{\sigma^{[i]}}\right]^2  dx, \\
				& \bG_{2i}(U) := \frac{R\sigma^{[i]}}{2(\gamma-1)\theta^{[i]}}  \int_{\mathbb{R}}  |(a_i)_x|\phi_i \left[(\theta-\bth)+ \frac{(\gamma-1)\theta^{[i]}}{\sigma^{[i]}v^{[i]}}(u-\bar{u})\right]^2   dx, \\ 
				& \bB_{new} = \sum_{i\in\{1,3\}} \delta_i\int_{\mathbb{R}}(a_i)_x|(v-\bar{v},u-\bar{u},\theta-\bth)|^2 dx  +\sum_{i\in\{1,3\}} \int_{\mathbb{R}}(a_i)_x|(v-\bar{v},u-\bar{u},\theta-\bth)|^3dx  \\
				& \qquad\quad +C\int_{\mathbb{R}}(a_1)_x \left(|(\widetilde{v}_3-v^{[3]},\widetilde{\theta}_3-\theta^{[3]})| + |(v^D-v_*, \theta^D-\theta_*)|\right)\cdot|(v-\bar{v},u-\bar{u},\theta-\bth)|^2 dx  \\
				&  \qquad\quad + C\int_{\mathbb{R}}(a_3)_x\left( |(\widetilde{v}_1-v^{[1]},\widetilde{\theta}_1-\theta^{[1]})| + |(v^D-v^*, \theta^D-\theta^*)|\right)\cdot|(v-\bar{v},u-\bar{u},\theta-\bth)|^2 dx.
			\end{aligned}
		\end{align*}
To derive the estimate \eqref{est:BG}, several similar techniques are employed. For illustration, we present one representative computation, as the remaining terms can be handled in the same manner.
Making use of the relation $\phi_1 + \phi_3 = 1$ together with Lemma \ref{lem:cutoffinteraction}, we obtain
\begin{align*}
	\begin{aligned}
		& \frac{R\theta^{[i]}\sigma^{[i]}}{2(v^{[i]})^2}  \int_{\mathbb{R}} |(a_i)_x|  \left[(v-\bar{v}) - \frac{u-\bar{u}}{\sigma^{[i]}}\right]^2  dx \\
		= &  \frac{R\theta^{[i]}\sigma^{[i]}}{2(v^{[i]})^2}  \int_{\mathbb{R}} |(a_i)_x| \phi_i \left[(v-\bar{v}) - \frac{u-\bar{u}}{\sigma^{[i]}}\right]^2  dx +  \frac{R\theta^{[i]}\sigma^{[i]}}{2(v^{[i]})^2}  \int_{\mathbb{R}} |(a_i)_x| (1-\phi_i) \left[(v-\bar{v}) - \frac{u-\bar{u}}{\sigma^{[i]}}\right]^2  dx,  \\
		\leq &  \frac{R\theta^{[i]}\sigma^{[i]}}{2(v^{[i]})^2}  \int_{\mathbb{R}} |(a_i)_x| \phi_i \left[(v-\bar{v}) - \frac{u-\bar{u}}{\sigma^{[i]}}\right]^2  dx + C\delta_i\sqrt{\delta_i}\exp(-C\delta_it) \int_{\mathbb{R}}\eta\left(U|\bar{U}\right)dx. 
	\end{aligned}
\end{align*}
In the following estimates, the four favorable terms $\bG_{11},\bG_{12},\bG_{21},\bG_{22}$ will be utilized, and the unfavorable term $\bB_{new}$ can be controlled by combining these with
$\bD$ and $\mathcal{G}^S_i$, as demonstrated below.

    First, it follows from \eqref{eq:ax} and Lemma \ref{lem:cutoffinteraction} that
		\begin{align*}
			\begin{aligned}
				& \delta_i\int_{\mathbb{R}}(a_i)_x|(v-\bar{v},u-\bar{u},\theta-\bth)|^2 dx  \\
				\leq & \sqrt{\delta_i} \int_{\mathbb{R}}|(\widetilde{v}_i)_x |\phi_i(v-\bar{v},u-\bar{u},\theta-\bth)|^2 dx + \sqrt{\delta_i} \int_{\mathbb{R}}|(\widetilde{v}_i)_x|(1-\phi_i) |(v-\bar{v},u-\bar{u},\theta-\bth)|^2 dx  \\
				\leq & \sqrt{\delta_i} \mathcal{G}^S_i + C\sqrt{\delta_i} \delta^2_i\exp(-C\delta_it) \int_{\mathbb{R}}\eta\left(U|\bar{U}\right)dx,
			\end{aligned}
		\end{align*}
		where
		$$\mathcal{G}_i^{S}:=\int_{\mathbb{R}}(\widetilde{v}_i)_x|\phi_i(v-\bar{v},u-\bar{u},\theta-\bth)|^2 dx. $$
By virtue of Young’s inequality and the identity $\phi_1 + \phi_3 = 1$, it follows that
		\begin{align}\label{est:bad3}
			\begin{aligned}
				&  \int_{\mathbb{R}}(a_1)_x|(v-\bar{v},u-\bar{u},\theta-\bth)|^3dx   \\
				\leq &C \int_{\mathbb{R}} |(a_1)_x|\left|(v-\bar{v}) - \frac{u-\bar{u}}{\sigma^{[1]}}\right|^3  dx + C\int_{\mathbb{R}} |(a_1)_x||u-\bar{u}|^3 dx \\
                & + C\int_{\mathbb{R}}  |(a_1)_x| \left|(\theta-\bth)+ \frac{(\gamma-1)\theta^{[1]}}{\sigma^{[1]}v^{[1]}}(u-\bar{u})\right|^3   dx  \\
				\leq &  C\int_{\mathbb{R}} |(a_1)_x|\phi_1\left|(v-\bar{v}) - \frac{u-\bar{u}}{\sigma^{[1]}}\right|^3  dx + C\int_{\mathbb{R}} |(a_1)_x|(1-\phi_1)\left|(v-\bar{v}) - \frac{u-\bar{u}}{\sigma^{[1]}}\right|^3  dx \\ 
                &+  C\int_{\mathbb{R}}  |(a_1)_x| \phi_1\left|(\theta-\bth)+ C\frac{(\gamma-1)\theta^{[1]}}{\sigma^{[1]}v^{[1]}}(u-\bar{u})\right|^3   dx \\ 
                &+  C\int_{\mathbb{R}}  |(a_1)_x| (1-\phi_1)\left|(\theta-\bth)+ \frac{(\gamma-1)\theta^{[1]}}{\sigma^{[1]}v^{[1]}}(u-\bar{u})\right|^3   dx  \\
				& + C\int_{\mathbb{R}} |(a_1)_x|\phi_1|u-\bar{u}|^3 dx  + C\int_{\mathbb{R}} |(a_1)_x|(1-\phi_1)|u-\bar{u}|^3 dx .
			\end{aligned}
			\end{align}
By \eqref{apriori_small}, we have 
			\begin{align*}
				\begin{aligned}
					& \int_{\mathbb{R}} |(a_1)_x|\phi_1\left|(v-\bar{v}) - \frac{u-\bar{u}}{\sigma^{[1]}}\right|^3  dx + \int_{\mathbb{R}}  |(a_1)_x| \phi_1\left|(\theta-\bth)+ \frac{(\gamma-1)\theta^{[1]}}{\sigma^{[1]}v^{[1]}}(u-\bar{u})\right|^3   dx \\
					\leq & C\varepsilon(\bG_{11} + \bG_{21}) .
				\end{aligned}
			\end{align*}
Applying Young’s inequality together with \eqref{apriori_small} and Lemma \ref{lem:cutoffinteraction}, we obtain
		\begin{align*}
				\begin{aligned}
				\int_{\mathbb{R}} |(a_1)_x|(1-\phi_1)\left|(v-\bar{v}) - \frac{u-\bar{u}}{\sigma^{[1]}}\right|^3  dx 
				\leq & C\varepsilon \frac{1}{\sqrt{\delta_1}} \int_{\mathbb{R}} |(\widetilde{v}_1)_x|\phi_3\left(|v-\bar{v}|^2 + |u-\bar{u}|^2\right) dx   \\
				\leq &  C\varepsilon \frac{1}{\sqrt{\delta_1}}\sup_{t,x}\left(|(\widetilde{v}_1)_x|\phi_3\right) \int_{\mathbb{R}} \left(|v-\bar{v}|^2 +  |u-\bar{u}|^2\right) dx  \\
				\leq & C \varepsilon \delta_1\sqrt{\delta_1}\exp(-C\delta_1t) \int_{\mathbb{R}}\eta\left(U|\bar{U}\right)dx.
				\end{aligned}
			\end{align*}
			Similarly, we have 
	\begin{align*}
	\begin{aligned}
		\int_{\mathbb{R}}  |(a_1)_x| (1-\phi_1)\left|(\theta-\bth)+ \frac{(\gamma-1)\theta^{[1]}}{\sigma^{[1]}v^{[1]}}(u-\bar{u})\right|^3   dx			
		&\leq C \varepsilon\delta_1\sqrt{\delta_1}\exp(-C\delta_1t) \int_{\mathbb{R}}\eta\left(U|\bar{U}\right)dx, \\
		\int_{\mathbb{R}} |(a_1)_x|(1-\phi_1)|u-\bar{u}|^3 dx &\leq C \varepsilon\delta_1\sqrt{\delta_1}\exp(-C\delta_1t) \int_{\mathbb{R}}\eta\left(U|\bar{U}\right)dx.
	\end{aligned}
\end{align*}
Using \eqref{eq:ax} and Sobolev inequality, we have
\begin{align*}
	\begin{aligned}
		&\int_{\mathbb{R}} |(a_1)_x|\phi_1|u-\bar{u}|^3 dx   \\
	\leq  &C\frac{1}{\sqrt{\delta_1}} \|u-\bar{u}\|^2_{L^{\infty}(\mathbb{R})} \int_{\mathbb{R}} |(\widetilde{v}_1)_x|\phi_1|u-\bar{u}| dx  \\
	\leq & C\frac{1}{\sqrt{\delta_1}}|\|(u-\bar{u})_x\|_{L^{2}(\mathbb{R})}\|u-\bar{u}\|_{L^{2}(\mathbb{R})} \sqrt{\int_{\mathbb{R}}|(\widetilde{v}_1)_x||\phi_1(u-\bar{u})|^2 dx} \sqrt{\int_{\mathbb{R}}|(\widetilde{v}_1)_x|dx}  \\
	\leq & C\varepsilon \|(u-\bar{u})_x\|_{L^{2}(\mathbb{R})} \sqrt{\int_{\mathbb{R}}|(\widetilde{v}_1)_x||\phi_1(u-\bar{u})|^2 dx}   \\
	\leq & C\varepsilon \|(u-\bar{u})_x\|^2_{L^{2}(\mathbb{R})} + C\varepsilon\int_{\mathbb{R}}|(\widetilde{v}_1)_x||\phi_1(u-\bar{u})|^2 dx \\ \leq & C\varepsilon(\bold{D}_{u_1} + \mathcal{G}^S_1).
	\end{aligned}
\end{align*}
Plugging the above estimates into \eqref{est:bad3}, we deduce that
\begin{align*}
	&  \int_{\mathbb{R}}(a_1)_x|(v-\bar{v},u-\bar{u},\theta-\bth)|^3dx   \\
	\leq & C\varepsilon(\bG_{11} + \bG_{21} +\bold{D}_{u_1} + \mathcal{G}_1^S) + C \varepsilon\delta_1\sqrt{\delta_1}\exp(-C\delta_1t) \int_{\mathbb{R}}\eta\left(U|\bar{U}\right)dx.
\end{align*}
		Similarly, we have
		\begin{align*}
		&  \int_{\mathbb{R}}(a_3)_x|(v-\bar{v},u-\bar{u},\theta-\bth)|^3dx   \\
		\leq & C\varepsilon(\bG_{21} + \bG_{23} +\bold{D}_{u_1} + \mathcal{G}_3^S) + C \varepsilon\delta_3\sqrt{\delta_3}\exp(-C\delta_3t) \int_{\mathbb{R}}\eta\left(U|\bar{U}\right)dx.
		\end{align*}
Likewise, using the interpolation inequality, Lemmas \ref{lm:interaction}, and \ref{lem:cutoffinteraction}, we have
	\begin{align*}
	   & \quad  \int_{\mathbb{R}}(a_1)_x \left(|(\widetilde{v}_3-v^{[3]},\widetilde{\theta}_3-\theta^{[3]})| + |(v^D-v_*, \theta^D-\theta_*)|\right)\cdot|(v-\bar{v},u-\bar{u},\theta-\bth)|^2 dx  \\ 
	   & \leq  C(\delta_C + \delta_3)(\bG_{11} + \bG_{21})  +C \varepsilon\delta_1\sqrt{\delta_1}\exp(-C\delta_1t) \int_{\mathbb{R}}\eta\left(U|\bar{U}\right)dx \\
	   & \quad + \frac{C}{\sqrt{\delta_1}} \int_{\mathbb{R}}(\widetilde{v}_1)_x \left(|(\widetilde{v}_3-v^{[3]},\widetilde{\theta}_3-\theta^{[3]})| + |(v^D-v_*, \theta^D-\theta_*)|\right)|u-\bar{u}|^2 dx   \\
	   &   \leq C(\delta_C + \delta_3)(\bG_{11} + \bG_{21})  +C \varepsilon\delta_1\sqrt{\delta_1}\exp(-C\delta_1t) \int_{\mathbb{R}}\eta\left(U|\bar{U}\right)dx  \\ 
	   & \quad + C\frac{1}{\sqrt{\delta_1}}\|u-\bar{u}\|^2_{L^4({\mathbb{R}})}\bigg[\left\|(\widetilde{v}_1)_x(v^D-v_*, \theta^D-\theta_*)\right\|_{L^2(\mathbb{R}) }  +\left\|(\widetilde{v}_1)_x(\widetilde{v}_3-v^{[3]},\widetilde{\theta}_3-\theta^{[3]}) \right\|_{L^2(\mathbb{R})}\bigg]    \\
	   & \leq  C\delta_0(\bG_{11} + \bG_{21})  +C \varepsilon\delta_1\sqrt{\delta_1}\exp(-C\delta_1t) \int_{\mathbb{R}}\eta\left(U|\bar{U}\right)dx    \\
	   &\quad + C\frac{1}{\sqrt{\delta_1}}\|(u-\bar{u})_x\|^{\frac{1}{2}}_{L^2(\mathbb{R})}\|u-\bar{u}\|^{\frac{3}{2}}_{L^2(\mathbb{R})}\delta_1^{\frac{3}{2}}\left(\delta_3 e^{-C\delta_3 t}  + \delta_Ce^{-Ct}\right) \\
	   & \leq  C\delta_0(\bG_{11} + \bG_{21}) + C \varepsilon\delta_1\sqrt{\delta_1}\exp(-C\delta_1t) \int_{\mathbb{R}}\eta\left(U|\bar{U}\right)dx   +C\bD_{u_1}^{\frac{1}{4}}\varepsilon^{\frac{3}{2}}\delta_1\left(\delta_3 e^{-C\delta_3 t}  + \delta_Ce^{-Ct}\right) \\
	   & \leq C\delta_0(\bG_{11} + \bG_{21}) + C \varepsilon\delta_1\sqrt{\delta_1}\exp(-C\delta_1t) \int_{\mathbb{R}}\eta\left(U|\bar{U}\right)dx  + C\varepsilon\bD_{u_1} + \delta_1^{\frac{4}{3}}\left(\delta_3^{\frac{4}{3}} e^{-C\delta_3 t}  + \delta_C^{\frac{4}{3}}e^{-Ct}\right).
	\end{align*}
Similarly, we have
\begin{align*}
	\begin{aligned}
		& \int_{\mathbb{R}}(a_3)_x\left( |(\widetilde{v}_1-v^{[1]},\widetilde{\theta}_1-\theta^{[1]})| + |(v^D-v^*, \theta^D-\theta^*)|\right)\cdot|(v-\bar{v},u-\bar{u},\theta-\bth)|^2 dx  \\
		\leq & \ C\delta_0(\bG_{13} + \bG_{23}) + C \varepsilon\delta_3\sqrt{\delta_3} \exp(-C\delta_3t) \int_{\mathbb{R}}\eta\left(U|\bar{U}\right)dx \\ 
        & + C\varepsilon\bD_{u_1} + \delta_3^{\frac{4}{3}}\left(\delta_1^{\frac{4}{3}} e^{-C\delta_1 t}  + \delta_C^{\frac{4}{3}}e^{-Ct}\right).
	\end{aligned}
\end{align*}
	Therefore, we have
	\begin{align*}
		\bB_{new}  \leq & \ C(\sqrt{\delta_0}+ \varepsilon + \delta_0 )\left(\sum_{i\in\{1,3\}}\bG_{1i} + \sum_{i\in\{1,3\}}\bG_{2i}  + \bD_{u_1} +\sum_{i\in\{1,3\}}\mathcal{G}^S_i\right) \\
        & + C\sum_{i\in\{1,3\}}  \varepsilon\sqrt{\delta_i} \delta_i\exp(-C\delta_it) \int_{\mathbb{R}}\eta\left(U|\bar{U}\right)dx\\ 
        &+ \delta_1^{\frac{4}{3}}\left(\delta_3^{\frac{4}{3}} e^{-C\delta_3 t}  + \delta_C^{\frac{4}{3}}e^{-Ct}\right) +  \delta_3^{\frac{4}{3}}\left(\delta_1^{\frac{4}{3}} e^{-C\delta_1 t}  + \delta_C^{\frac{4}{3}}e^{-Ct}\right).  
	\end{align*}
	\noindent$\bullet$ (Estimate on  $-\frac{\delta_i}{2M_i}|\dot{\bX}_i|^2$): 
In order to bound $-\frac{\delta_i}{2M_i}|\dot{\bX}i|^2$, it suffices to control $\bold{Y}_{ij}$, as indicated by the decomposition \eqref{eq:decomposeX}.
	
We focus on the estimation of $-\frac{\delta_1}{2M_1}|\dot{\bX}_1|^2$ without loss of generality, since the arguments for the terms $-\frac{\delta_1}{2M_1}|\dot{\bX}_1|^2$ and $-\frac{\delta_3}{2M_3}|\dot{\bX}_3|^2$ are analogous.
By applying $\phi_1 + \phi_3 = 1$ and making the corresponding change of variables for $y_1$ and $w_1$, we deduce
	\begin{align*}
		\bold{Y}_{11}  = \int_{\mathbb{R}} a(\widetilde{u}_1)_x(u-\bar{u}) dx & = -\sigma_1\int_{\mathbb{R}} a(\widetilde{v}_1)_x\phi_1(u-\bar{u}) dx   -\sigma_1\int_{\mathbb{R}} a(\widetilde{v}_1)_x\phi_3(u-\bar{u}) dx   \\
		& = -\sigma_1\delta_1\int_{0}^{1}aw_1dy_1 - \sigma_1\int_{\mathbb{R}} a(\widetilde{v}_1)_x\phi_3(u-\bar{u}) dx,  
	\end{align*}
	Applying \eqref{est:sigma} and $|a-1| \leq C\sqrt{\delta_0}$, we have 
	\begin{equation}\label{y11}
	\left|\bold{Y}_{11} - \delta _1\sigma^{[1]}\int_{0}^{1} w_1dy_1 \right|
	\leq C\delta_1(\sqrt{\delta_0}+ \delta_0)\int_{0}^{1} w_1dy_1 + C\int_{\mathbb{R}} |(\widetilde{v}_1)_x|\phi_3(u-\bar{u}) dx.
	\end{equation}
	Note that
	 \begin{align*}
		\bold{Y}_{12}= &  \int_{\mathbb{R}} a\frac{(\widetilde{v}_1)_x\bar{p}}{\bar{v}}(v-\bar{v})dx \\
		=&  \int_{\mathbb{R}} a\frac{(\widetilde{v}_1)_x\bar{p}}{\bar{v}}\phi_1(v-\bar{v})dx+  \int_{\mathbb{R}} a\frac{(\widetilde{v}_1)_x\bar{p}}{\bar{v}}\phi_3(v-\bar{v})dx    \\
		= & \int_{\mathbb{R}} a\frac{(\widetilde{v}_1)_x\bar{p}}{\sigma^{[1]}\bar{v}}\phi_1(u-\bar{u})dx + \int_{\mathbb{R}} a\frac{(\widetilde{v}_1)_x\bar{p}}{\bar{v}}\phi_1\left(v-\bar{v}-\frac{u-\bar{u}}{\sigma^{[1]}}\right)dx +  \int_{\mathbb{R}} a\frac{(\widetilde{v}_1)_x\bar{p}}{\bar{v}}\phi_3(v-\bar{v})dx    \\
		=& \int_{0}^{1} a\frac{\bar{p}\delta_1}{\sigma^{[1]}\bar{v}}w_1dy_1 + \int_{\mathbb{R}} a\frac{(\widetilde{v}_1)_x\bar{p}}{\bar{v}}\phi_1\left(v-\bar{v}-\frac{u-\bar{u}}{\sigma^{[1]}}\right)dx +  \int_{\mathbb{R}} a\frac{(\widetilde{v}_1)_x\bar{p}}{\bar{v}}\phi_3(v-\bar{v})dx. 
	\end{align*}
From \eqref{est:pp} and \eqref{est:ax}, it follows that
	\begin{align}
		\begin{aligned}\label{y12}
		&\left|\bold{Y}_{12} -\frac{p^{[1]}\delta_1}{\sigma^{[1]}v^{[1]}} \int_{0}^{1}w_1dy_1 \right|  \\
		\leq & \ C\delta_1(\delta_0 +\sqrt{\delta_0}) \int_{\mathbb{R}} w_1dy_1 + C\sqrt{\delta_1}\int_{\mathbb{R}} |(a_1)_x|\phi_1\left|v-\bar{v}-\frac{u-\bar{u}}{\sigma^{[1]}}\right| dx +  \int_{\mathbb{R}} a\frac{(\widetilde{v}_1)_x\bar{p}}{\bar{v}}\phi_3(v-\bar{v})dx .   \\
		\end{aligned}
	\end{align}
In a similar manner, we obtain
	\begin{align}
		\begin{aligned}\label{y13}
		& \left|\bold{Y}_{13} - \frac{(\gamma-1)p^{[1]}\delta_1}{\sigma^{[1]}v^{[1]}} \int_{0}^{1} w_1dy_1 \right|  \\
		\leq & \ C\delta_1(\delta_0 +\sqrt{\delta_0}+ \varepsilon) \int_{0}^{1} |w_1|dy_1 + C\sqrt{\delta_1} \int_{\mathbb{R}} |(a_1)_x|\phi_1 \left[(\theta-\bth)+ \frac{(\gamma-1)\theta^{[1]}}{\sigma^{[1]}v^{[1]}}(u-\bar{u})\right]   dx\\
		& \quad  + \int_{\mathbb{R}} a\frac{R}{\gamma-1}\phi_3\frac{(\widetilde{\theta}_1)_x}{\bth}(\theta-\bth) dx,
		\end{aligned}
	\end{align}
    where we additionally used $|\theta- \theta^{[1]}| \leq C(\delta_0 + \varepsilon)$ and \eqref{est:vtheta}.
Therefore, using \eqref{eq:decomposeX}, \eqref{y11}, \eqref{y12} and \eqref{y13} together with $\sigma^{[1]} = \sqrt{\frac{\gamma p^{[1]}}{v^{[1]}}}$, we have 
\begin{align*}
	& \left|\dot{\bX}_1 + 2\sigma^{[1]}M_1 \int_{0}^{1} |w_1|dy_1 \right|    \\
	 \leq & \ (\delta_0 +\sqrt{\delta_0}+ \varepsilon) \int_{0}^{1} |w_1|dy_1 + \frac{C}{\sqrt{\delta_1}}\int_{\mathbb{R}} (a_1)_x\phi_1\left(v-\bar{v}-\frac{u-\bar{u}}{\sigma^{[1]}}\right)dx   \\
	& \quad+ \frac{C}{\sqrt{\delta_1}} \int_{\mathbb{R}} |(a_1)_x| \left[(\theta-\bth)+ \frac{(\gamma-1)\theta^{[1]}}{\sigma^{[1]}v^{[1]}}(u-\bar{u})\right]   dx\\
	& \quad  + \frac{C}{\delta_1}\int_{\mathbb{R}} |(\widetilde{v}_1)_x\phi_1||u-\bar{u}| dx +   \frac{C}{\delta_1}\int_{\mathbb{R}} |(\widetilde{v}_1)_x\phi_1||v-\bar{v}|dx +  \frac{C}{\delta_1}\int_{\mathbb{R}} |\phi_1(\widetilde{\theta}_1)_x||\theta-\bth| dx ,
\end{align*}
which yields
\begin{align*}
	& \left|2\sigma^{[1]}M_1\int_{0}^{1} |w_1|dy_1 - (-\dot{\bX}_1)  \right|^2    \\
	\leq & \ (\delta_0 +\sqrt{\delta_0}+ \varepsilon)^2 \int_{0}^{1} |w_1|^2dy_1 + C\frac{1}{\delta_1}\left(\int_{\mathbb{R}} |(a_1)_x| \phi_1\left(v-\bar{v}-\frac{u-\bar{u}}{\sigma^{[1]}}\right)dx \right)^2  \\
	& \quad+ C\frac{1}{\delta_1} \left(\int_{\mathbb{R}} |(a_1)_x| \phi_1 \left[(\theta-\bth)+ \frac{(\gamma-1)\theta^{[1]}}{\sigma^{[1]}v^{[1]}}(u-\bar{u})\right]  dx\right)^2\\
	& \quad  + \frac{C}{\delta^2_1}\left(\int_{\mathbb{R}} |(\widetilde{v}_1)_x|\phi_3|u-\bar{u}| dx\right)^2 +   \frac{C}{\delta^2_1}\left(\int_{\mathbb{R}} |(\widetilde{v}_1)_x|\phi_3|v-\bar{v}|dx\right)^2 +  \frac{C}{\delta^2_1}\left(\int_{\mathbb{R}} |(\widetilde{\theta}_1)_x|\phi_3|\theta-\bth| dx\right)^2 .
\end{align*}
This, in conjunction with the basic inequality $\frac{p^2}{2} - q^2 \le (p - q)^2$, yields
\begin{align*}
	2(\sigma^{[1]})^2M_1^2\left(\int_{0}^{1} |w_1|dy_1\right)^2 -|\dot{\bX}_1|^2  \leq \ & C(\delta_0 +\sqrt{\delta_0} + \varepsilon)^2 \int_{\mathbb{R}} |w_1|^2dy_1   \\
	& + \frac{C}{\sqrt{\delta_1}}(\bG_{11} + \bG_{21}) + C\delta_1\exp(-C\delta_1t) \int_{\mathbb{R}}\eta\left(U|\bar{U}\right)dx.
\end{align*}
Thus, 
\begin{align}
\begin{aligned}\label{estimateX1}
-\frac{\delta_1}{2M_1}|\dot{\bX}_1|^2  \leq & \ -(\sigma^{[1]})^2M_1\delta_1\left(\int_{0}^{1} w_1dy_1\right)^2  + C\delta_1 (\delta_0 +\sqrt{\delta_0}+ \varepsilon)^2 \int_{0}^{1} |w_1|^2dy_1   \\
& \ + C\sqrt{\delta_1}(\bG_{11} + \bG_{21}) + C\delta^2_1\exp(-C\delta_1t) \int_{\mathbb{R}}\eta\left(U|\bar{U}\right)dx.
\end{aligned}
\end{align}
	\noindent$\bullet$ (Estimate on  $\bB_1$): 
	Notice
	\begin{align}
		\begin{aligned}\label{eqsplitb1}
			\bold{B}_1(U)= &\sum\limits_{i\in \{1, 3\}}\int_{\mathbb{R}} a(\widetilde{v}_i)_x\left[ \frac{(\gamma+1)p^{[i]}\sigma^{[i]}}{2(v^{[i]})^2} |v-\bar{v}|^2 + \frac{\sigma^{[i]}R}{2v^{[i]}\theta^{[i]}}(\theta-\bth)^2-\frac{\sigma^{[i]}p^{[i]}}{2v^{[i]}\theta^{[i]}}(v-\bar{v})(\theta-\bth)\right] dx \\
			:= & \sum\limits_{i\in \{1, 3\}} \left(\bB^i_{11} + \bB^i_{12}+ \bB^i_{13}\right).
		\end{aligned}
	\end{align}
	Given that the estimates are the same for both $i=1$ and $i=3$, it suffices to treat the case $i=1$ as representative.
    First, note that
	\begin{align*}
		\begin{aligned}
			 \bB^1_{11} = \frac{(\gamma+1)p^{[1]}\sigma^{[1]}}{2(v^{[1]})^2} \int_{\mathbb{R}} a(\widetilde{v}_1)_x\phi_1^2 |v-\bar{v}|^2  dx + \frac{(\gamma+1)p^{[1]}\sigma^{[1]}}{2(v^{[1]})^2} \int_{\mathbb{R}} a(\widetilde{v}_1)_x(1-\phi_1^2) |v-\bar{v}|^2  dx 
		\end{aligned}
	\end{align*}
Applying Young’s inequality and using \eqref{eq:ax} for the first term, we deduce
	\begin{align*}
		\begin{aligned}
			& \frac{(\gamma+1)p^{[1]}\sigma^{[1]}}{2(v^{[1]})^2} \int_{\mathbb{R}} a(\widetilde{v}_1)_x\phi_1^2 |v-\bar{v}|^2  dx    \\
			= &  \frac{(\gamma+1)p^{[1]}\sigma^{[1]}}{2(v^{[1]})^2} \int_{\mathbb{R}} a(\widetilde{v}_1)_x\phi_1^2 \left|\left(v-\bar{v}-\frac{u-\bar{u}}{\sigma^{[1]}}\right)+ \frac{u-\bar{u}}{\sigma^{[1]}}\right|^2  dx \\
			\leq & \frac{(\gamma+1)p^{[1]}\sigma^{[1]}}{2(v^{[1]})^2}\left[1+\sqrt{\delta_0} + \delta_1^{\frac{1}{4}}\right] \int_{\mathbb{R}}|(\widetilde{v}_1)_x|\left|\frac{\phi_1(u-\bar{u})}{\sigma^{[1]}}\right|^2  \\
			& +C\delta_1^{-\frac{1}{4}} \int_{\mathbb{R}} |(\widetilde{v}_1)_x|\left|\phi_1 \left(v-\bar{v}-\frac{u-\bar{u}}{\sigma^{[1]}}\right)\right|^2 dx   \\ 
			\leq & \frac{(\gamma+1)p^{[1]}}{2(v^{[1]})^2\sigma^{[1]}}\left[1+\sqrt{\delta_0} + \delta_1^{\frac{1}{4}}\right]\delta_1 \int_{0}^{1}|w_1|^2dy_1 +  C\delta_1^\frac{1}{4}\bG_{11}.
		\end{aligned}
	\end{align*}
In the same way, the second term of $\bB^1_{11}$ is bounded by
	\begin{align*}
		\begin{aligned}
		 	\frac{(\gamma+1)p^{[1]}\sigma^{[1]}}{2(v^{[1]})^2} \int_{\mathbb{R}} a|(\widetilde{v}_1)_x|(1+\phi_1)\phi_3 |v-\bar{v}|^2  dx   
		 \leq &C \int_{\mathbb{R}} |(\widetilde{v}_1)_x|\phi_3 |v-\bar{v}|^2  dx   \\
		\leq & C\delta^2_1\exp(-C\delta_1t) \int_{\mathbb{R}}\eta\left(U|\bar{U}\right)dx.
		\end{aligned}
	\end{align*}
	Therefore, we have 
	\begin{align}
		\begin{aligned}\label{estb111}
		 \bB^1_{11} \leq  &\frac{(\gamma+1)p^{[1]}}{2(v^{[1]})^2\sigma^{[1]}}\left[1+\sqrt{\delta_0} + \delta_1^{\frac{1}{4}}\right]\delta_1 \int_{0}^{1}|w_1|^2dy_1 +  C\delta_1^{\frac{1}{4}}\bG_{11}  \\
		 & + C\delta^2_1\exp(-C\delta_1t) \int_{\mathbb{R}}\eta\left(U|\bar{U}\right)dx.
		 \end{aligned}
	\end{align}
	Similarly, we estimate 
	\begin{align}
		\begin{aligned}\label{estb112}
			\bB^1_{12} = & \ \frac{\sigma^{[1]}R}{2v^{[1]}\theta^{[1]}}\int_{\mathbb{R}} a(\widetilde{v}_1)_x\left| \theta-\bth + \frac{(\gamma-1)\theta^{[1]}}{\sigma^{[1]}v^{[1]}}(u-\bar{u}) - \frac{(\gamma-1)\theta^{[1]}}{\sigma^{[1]} v^{[1]}}(u-\bar{u}) \right|^2 dx  \\
			 \leq & \ \frac{(\gamma-1)^2p^{[1]}}{2\sigma^{[1]}(v^{[1]})^2}\left[1+\sqrt{\delta_0} + \delta_1^{\frac{1}{4}}\right]\delta_1 \int_{0}^{1}|w_1|^2dy_1 +  C\delta_1^{\frac{1}{4}}\bG_{21}  \\
			& \quad + C\delta^2_1\exp(-C\delta_1t) \int_{\mathbb{R}}\eta\left(U|\bar{U}\right)dx,
		\end{aligned}
	\end{align}
	and 
		\begin{align*}
		\begin{aligned}
			 \bB^1_{13} = &-\frac{\sigma^{[1]}p^{[1]}}{2v^{[1]}\theta^{[1]}}\int_{\mathbb{R}} a(\widetilde{v}_1)_x(v-\bar{v})(\theta-\bth)dx  \\
			\leq & \frac{\sigma^{[1]}p^{[1]}}{2v^{[1]}\theta^{[1]}}\int_{\mathbb{R}} a|(\widetilde{v}_1)_x|\phi_1^2\left|\left(v-\bar{v} -\frac{u-\bar{u}}{\sigma^{[1]}}\right)+\frac{u-\bar{u}}{\sigma^{[1]}} \right|  \\
		& \qquad\qquad \cdot	\left|\left(\theta-\bth+ \frac{(\gamma-1)\theta^{[1]}}{\sigma^{[1]}v^{[1]}}(u-\bar{u}\right)-\frac{(\gamma-1)\theta^{[1]}}{\sigma^{[1]}v^{[1]}}(u-\bar{u})\right|dx   \\
		& +  \frac{\sigma^{[1]}p^{[1]}}{2v^{[1]}\theta^{[1]}}\int_{\mathbb{R}} a|(\widetilde{v}_1)_x|\left(1-\phi_1^2\right)\left|v-\bar{v} \right| \left|\theta-\bth\right|dx  ,
		\end{aligned}
        \end{align*}
The first term admits the following estimate:
		\begin{align*}
		\begin{aligned}
			& \frac{\sigma^{[1]}p^{[1]}}{2v^{[1]}\theta^{[1]}}\int_{\mathbb{R}} a|(\widetilde{v}_1)_x|(\phi_1)^2\left|\left(v-\bar{v} -\frac{u-\bar{u}}{\sigma^{[1]}}\right)+\frac{u-\bar{u}}{\sigma^{[1]}} \right|  \\
			& \qquad\qquad \cdot	\left|\left(\theta-\bth+ \frac{(\gamma-1)\theta^{[1]}}{\sigma^{[1]}v^{[1]}}(u-\bar{u})\right)-\frac{(\gamma-1)\theta^{[1]}}{\sigma^{[1]}v^{[1]}}(u-\bar{u})\right|dx   \\
	        \leq & \  \frac{\sigma^{[1]}p^{[1]}}{2v^{[1]}\theta^{[1]}}\left[1+\sqrt{\delta_0} + \delta_1^{\frac{1}{4}}\right]\delta_1 \int_{0}^{1}|w_1|^2dy_1 + C\delta_1^{\frac{1}{4}}\bG_{11}+  C\delta_1^{\frac{1}{4}}\bG_{21} , \\
		\end{aligned}
	\end{align*}
The second term can be bounded as follows:
		\begin{align*}
		\begin{aligned}
			&  \frac{\sigma^{[1]}p^{[1]}}{2v^{[1]}\theta^{[1]}}\int_{\mathbb{R}} a|(\widetilde{v}_1)_x|\left(1-\phi_1^2\right)\left|v-\bar{v} \right| \left|\theta-\bth\right|dx 
            \leq C\delta^2_1\exp(-C\delta_1t) \int_{\mathbb{R}}\eta\left(U|\bar{U}\right)dx.
		\end{aligned}
	\end{align*}
	Therefore, we have 
		\begin{align}
		\begin{aligned}\label{estb113}
			 \bB^1_{13} \leq &  \frac{\sigma^{[1]}p^{[1]}}{2v^{[1]}\theta^{[1]}}\left[1+\sqrt{\delta_0} + \delta_1^{\frac{1}{4}}\right]\delta_1 \int_{0}^{1}|w_1|^2dy_1 +C\delta_1^{\frac{1}{4}}\bG_{11}+  C\delta_1^{\frac{1}{4}}\bG_{21}   \\
			&  + C\delta^2_1\exp(-C\delta_1t) \int_{\mathbb{R}}\eta\left(U|\bar{U}\right)dx.
		\end{aligned}
		\end{align}
Define $\alpha_1$ and $\alpha_3$ as the $O(1)$ constants given by
		\begin{align*}
			\begin{aligned}
				\alpha_1 := \frac{\gamma(\gamma+1)p^{[1]}}{2\sigma^{[1]}(v^{[1]})^2}  \qquad\text{and }\qquad	 \alpha_3 := \frac{\gamma(\gamma+1)p^{[3]}}{2\sigma^{[3]}(v^{[3]})^2} 
			\end{aligned}
		\end{align*}
Using \eqref{estb111}, \eqref{estb112}, and \eqref{estb113} in \eqref{eqsplitb1}, we deduce
		\begin{align}
			\begin{aligned}\label{est_B1}
				\bB_1 \leq  &\sum_{i\in\{1,3\}}\frac{\sigma^{[i]}p^{[i]}}{2v^{[i]}\theta^{[i]}}\left[1+\sqrt{\delta_0} + \delta_i^{\frac{1}{4}}\right]\delta_i \int_{0}^{1}|w_i|^2dy_i \\
				& +C\sum_{i\in\{1,3\}}\delta_i^{\frac{1}{4}}(\bG_{1i}+ \bG_{2i})   + C\sum_{i\in\{1,3\}}\delta^2_i\exp(-C\delta_it) \int_{\mathbb{R}}\eta\left(U|\bar{U}\right)dx. 
			\end{aligned}
		\end{align}
			\noindent$\bullet$ (Estimate on  $\bD$): 
			Split  
			\begin{align}
				\begin{aligned}\label{eq_D1D2}
					 & \bold{D}(U)=\int_{\mathbb{R}}  a\left[\frac{\mu}{v} |(u-\bar{u})_x|^2 + \frac{\kappa}{v\theta}\left|(\theta-\bar{\theta})_x\right|^2 \right]dx :=\widetilde{\bold{D}}_{u_1}(U) + \widetilde{\bold{D}}_{\theta_1}(U).
				\end{aligned}
			\end{align}
Note that $\widetilde{\bold{D}}_{u_1}(U)\sim \bold{D}_{u_1}(U)$ and $\widetilde{\bold{D}}_{\theta_1}(U)\sim \bold{D}_{\theta_1}(U)$, which are defined in \eqref{good_terms}.

First, noting that $1 \ge \phi_i \ge \phi_i^2$ for all $i$, we obtain
				\begin{align*}
				\begin{aligned}
					\widetilde{\bold{D}}_{u_1}(U)=& \int_{\mathbb{R}}  a(\phi_1 + \phi_3)\frac{\mu}{v} |(u-\bar{u})_x|^2 dx    \\
					 \geq & \sum_{i\in\{1,3\}} \int_{\mathbb{R}}  a\phi_i^2\frac{\mu}{v} |(u-\bar{u})_x|^2 dx  .
				\end{aligned}
			\end{align*}
By virtue of Young’s inequality, it follows that for any $\delta_* > 0$ sufficiently small,
				\begin{align*}
				\begin{aligned}
					& \int_{\mathbb{R}}  a\frac{\mu}{v}\left|\pa_x \left(\phi_i (u-\bar{u})\right)\right|^2 dx   \\
					 \leq & \ (1+\delta_*)\int_{\mathbb{R}}  a\frac{\mu}{v}\phi_i^2 \left|(u-\bar{u})^2_x \right|dx  + \frac{C}{\delta_*}  \int_{\mathbb{R}}  a\frac{\mu}{v}(\pa_x \phi_i)^2 \left|(u-\bar{u})^2\right|dx,   \\
				\end{aligned}
			\end{align*}
	we have 
	\begin{align*}
		 - \widetilde{\bold{D}}_{u_1}(U) & \leq - \frac{1}{1+\delta_*} \sum_{i\in\{1,3\}}\int_{\mathbb{R}}  a\frac{\mu}{v}|\pa_x\left(\phi_i (u-\bar{u})\right)|^2 dx  +  \frac{C}{\delta_*}  \int_{\mathbb{R}}  a\frac{\mu}{v}(\pa_x \phi_i)^2 \left|(u-\bar{u})^2\right|dx   \\
		& := J_1 + J_2.
	\end{align*}
	
In order to represent $J_1$ through the variables $y_i$ and $w_i$, we utilize the following bound: 
	\begin{align}\label{Difussion}
\left|{1\over y_i(1-y_i)}\frac{\mu}{\widetilde{v}_i}\frac{dy_i}{dx} - \delta_i \alpha_i\frac{\mu R\gamma}{\mu R\gamma + \kappa(\gamma-1)^2}\right| \leq C\delta^2_i.
	\end{align}
We refer to \cite[Appendix B]{KVW-NSF}  for the proof of \eqref{Difussion}. 	

With 
	$\left\|a\frac{\widetilde{v}_i}{v} -1\right\|_{L^{\infty}({\mathbb{R}})} \leq C(\delta_0 +\varepsilon +  \sqrt{\delta_0} )$, this yields
	\begin{align*}
	   \begin{aligned}
	   	J_1&  = - \frac{1}{1+\delta_*} \sum_{i\in\{1,3\}}  \int_{0}^{1} a\frac{ \widetilde{v}_i}{v} \frac{\mu}{\widetilde{v}_i}\left({dy_i\over dx}\right)  \left|\pa_{y_i} w_i\right|^2 dy_i  \\
	   	& \leq -\sum_{i\in\{1,3\}}\left(1-C(\delta_0 + \varepsilon + \sqrt{\delta_0} + \delta_*)\right)\left(\delta_i \alpha_i\frac{\mu R\gamma}{\mu R\gamma + \kappa(\gamma-1)^2}-C\delta^2_i \right) \int_{0}^{1} y_i(1-y_i)\left|\pa_{y_i} w_i\right|^2 dy_i   \\
	   	& \leq -\sum_{i\in\{1,3\}}\alpha_i\frac{\mu R\gamma}{\mu R\gamma + \kappa(\gamma-1)^2}\left(1-C(\delta_0 + \varepsilon + \sqrt{\delta_0} + \delta_*)\right) \delta_i \int_{0}^{1} y_i(1-y_i)\left|\pa_{y_i} w_i\right|^2 dy_i. 
	   \end{aligned}
	\end{align*}
	In order to estimate $J_2$, we apply the following bound, which holds for each $i=1,3$,
	\[\left|\pa_x(\phi_i(t,x))\right|\leq \frac{4}{\sigma_3 -\sigma_1}\frac{1}{t}, \qquad \forall x \in \mathbb{R}, \qquad t\in (0, T].\]
	Thus,
	\[J_2 \leq \frac{C}{\delta_*t^2}\int_{\mathbb{R}}\eta\left(U|\bar{U}\right)dx. \]
Consequently, for any $\delta_* \in (0,1)$ taken small enough, it follows that
	\begin{align}
		\begin{aligned}\label{est_D1}
			-\widetilde{\bold{D}}_{u_1} \leq & -\sum_{i\in\{1,3\}} \alpha_i\frac{\mu R\gamma}{\mu R\gamma + \kappa(\gamma-1)^2}\left(1-C(\delta_0 + \varepsilon + \sqrt{\delta_0}+ \delta_*)\right) \delta_i\int_{0}^{1} y_i(1-y_i)\left|\pa_{y_i} w_i\right|^2 dy_i   \\
			& +  \frac{C}{\delta_*t^2}\int_{\mathbb{R}}\eta\left(U|\bar{U}\right)dx.
		\end{aligned}
	\end{align}
	In similar manner, using the change of variable, we have 
		\begin{align}
		\begin{aligned}\label{est_D2}
			-\widetilde{\bold{D}}_{\theta_1} \leq & - \sum_{i\in\{1, 3\}}\alpha_i\frac{\mu R\gamma}{\mu R\gamma + \kappa(\gamma-1)^2}\frac{\kappa}{\mu\theta^{[i]}}\left(1-C(\delta_0 + \varepsilon + \sqrt{\delta_0} + \delta_*)\right) \delta_i\int_{0}^{1} y_i(1-y_i)\left|\pa_{y_i} \left(\phi_i(\theta-\bth)\right)\right|^2 dy_i  \\
			 & + \frac{C}{\delta_*t^2}\int_{\mathbb{R}}\eta\left(U|\bar{U}\right)dx.
		\end{aligned}
	\end{align}
Our next goal is to utilize the Poincaré-type inequality stated in Lemma \ref{Poincare}, together with \eqref{estimateX1}, in order to absorb the leading unfavorable term $\bB_1$ into the diffusive component $\bD$.
However, recalling that
\[1>\frac{\mu R\gamma}{\mu R \gamma+ \kappa(\gamma-1)^2} \rightarrow 0 \quad \text{as}\quad \gamma \rightarrow \infty,\]
and combining \eqref{est_B1} with \eqref{est_D1}, we find that the dissipation $\widetilde{\bold{D}}{u_1}$ alone cannot effectively dominate $\bB_1$.
To remedy this difficulty, extract an additional favorable term involving $w_i$ from $\widetilde{\bold{D}}_{\theta_1}$, as detailed below.

To begin with, we make use of Lemma \ref{Poincare} along with
	\[\int_{0}^{1} \left|w_i-\bar{w}_i\right|^2 dy_i = \int_{0}^{1} w_i^2 dy_i- \bar{w}_i^2, \qquad \bar{w}_i := \int_{0}^{1} w_i dy_i,\]
	we have 
	\begin{align*}
		\begin{aligned}
			- \bD  \leq & -\sum_{i\in\{1,3\}} 2\alpha_i\frac{\mu R\gamma}{\mu R\gamma + \kappa(\gamma-1)^2}\left(1-C(\delta_0 + \varepsilon +\sqrt{\delta_0} + \delta_*)\right) \delta_i \left[\int_{0}^{1} w_i^2 dy_i- \bar{w}_i^2\right]  \\
			&   - \sum_{i\in\{1,3\}} 2 \alpha_i\frac{\mu R\gamma}{\mu R\gamma + \kappa(\gamma-1)^2}\frac{\kappa}{\mu\theta^{[i]}}\left(1-C(\delta_0 + \varepsilon + \sqrt{\delta_0}+ \delta_*)\right) \delta_i  \\
			& \qquad \qquad \cdot \left[\int_{0}^{1}  \left(\phi_i(\theta-\bth)\right)^2 dy_i - \left(\int_{0}^{1}  \phi_i(\theta-\bth) dy_i\right)^2 \right] \\
			&+ \frac{C}{\delta_*t^2}\int_{\mathbb{R}}\eta\left(U|\bar{U}\right)dx.
		\end{aligned}
	\end{align*}
	Observe that Young's inequality yields
	\begin{align*}
		\begin{aligned}
		   \int_{0}^{1}  \left|\phi_i(\theta-\bth)\right|^2 dy_i =&   \int_{0}^{1}  \left|\phi_i\left(\theta-\bth+ \frac{(\gamma-1)\theta^{[i]}}{\sigma^{[i]} v^{[i]}}  (u-\bar{u})\right) -\frac{(\gamma-1)\theta^{[i]}}{\sigma^{[i]}v^{[i]}}(u-\bar{u}) \right|^2 dy_i  \\
		    \geq&  \left(\frac{(\gamma-1)\theta^{[i]}}{\sigma^{[i]}v^{[i]}}\right)^2\left(1-\delta_i^{\frac{1}{4}}\right)\int_{0}^{1}|w_i|^2 dy_i  \\
		   &  - \frac{C}{\delta_i^{\frac{1}{4}}} \int_{0}^{1}  \left[\phi_i\left(\theta-\bth+ \frac{(\gamma-1)\theta^{[i]}}{\sigma^{[i]} v^{[i]}}(u-\bar{u})\right)\right]^2 dy_i, 
		\end{aligned}
	\end{align*}
	and  
	\begin{align*}
		\begin{aligned}
			 \left(\int_{0}^{1}  \phi_i(\theta-\bth) dy_i\right)^2  \leq & \ 2\left[\int_{0}^{1}\frac{(\gamma-1)\theta^{[i]}}{\sigma^{[i]}v^{[i]}}\phi_i(u-\bar{u})dy_i\right]^2 \\ 
             &+ 2\left[\int_{0}^{1}\phi_i\left((\theta-\bth) + \frac{(\gamma-1)\theta^{[i]}}{\sigma^{[i]}v^{[i]}}\phi_i(u-\bar{u})\right)dy_i\right]^2,
		\end{aligned}
	\end{align*}

combining with $ \sigma^{[i]}=\frac{\sqrt{\gamma R\theta^{[i]}}}{v^{[i]}}$, \eqref{eq:ax} and \eqref{eq:dy} yield
	
	\begin{align*}
		\begin{aligned}
			- \bD \leq &  -\sum_{i\in\{1,3\}}2\alpha_i\left(1-C(\delta_0 + \varepsilon + \sqrt{\delta_0}+ \delta_*)\right) \delta_i\frac{\mu R\gamma}{\mu R\gamma + \kappa(\gamma-1)^2} \left(1 +\frac{\kappa(\gamma-1)^2}{\mu R\gamma}\right)  \cdot \left(1-\delta_i^{\frac{1}{4}}\right)\int_{0}^{1} |w_i|^2 dy_i   \\
			& + \sum_{i\in\{1,3\}} 2\alpha_i\left(1-C(\delta_0 + \varepsilon + \sqrt{\delta_0} + \delta_*)\right) \delta_i\left[1+ 2\frac{\kappa}{\mu \theta^{[i]}}\left(\frac{(\gamma-1)\theta^{[i]}}{\sigma^{[i]}v^{[i]}}\right)^2\right]\bar{w}_i^2  \\
			& + \sum_{i\in\{1,3\}}4\alpha_i\left(1-C(\delta_0 + \varepsilon + \sqrt{\delta_0} + \delta_*)\right) \delta_i\frac{\kappa}{\mu \theta^{[i]}} \int_{0}^{1}  \left[\phi_i\left(\theta-\bth+ \frac{(\gamma-1)\theta^{[i]}}{\sigma^{[i]}v^{[i]}}(u-\bar{u})\right)\right]^2 dy_i  \\
			& + \frac{C}{\delta_*t^2}\int_{\mathbb{R}}\eta\left(U|\bar{U}\right)dx\\
			\leq &   -\sum_{i\in\{1,3\}}2\alpha_i\left(1-C(\delta_0 + \varepsilon + \sqrt{\delta_0}+ \delta_*) -\delta_i^{\frac{1}{4}}\right) \delta_i
		\int_{0}^{1} \left|w_i\right|^2 dy_i   \\
		& + \sum_{i\in\{1,3\}}2\alpha_i\delta_i\left(1+ \frac{2\kappa(\gamma-1)^2}{\mu R \gamma}\right)\bar{w}_i^2  + C\sum_{i\in\{1,3\}}\delta_i^{\frac{1}{4}}\bG_{2i} + \frac{C}{\delta_*t^2}\int_{\mathbb{R}}\eta\left(U|\bar{U}\right)dx.
		\end{aligned}
		\end{align*}
	\noindent$\bullet$ (Summary): 	
Since $\delta_i$, $\delta_0$, and $\varepsilon$ are sufficiently small, it follows that
	\begin{align*}
		\begin{aligned}
			& -\sum_{i\in\{1,3\}}\frac{\delta_i}{2M_i}|\dot{\bold{X}_i}|^{2} + \bold{B}_1 + \bold{B}_2 -\bold{G} -\frac{3}{4}\bold{D}  \\
			\leq & \ C(\delta_0 + \varepsilon + \sqrt{\delta_0})\sum_{i\in\{1,3\}} \mathcal{G}_i^S  + \delta_1^{\frac{4}{3}}\left(\delta_3^{\frac{4}{3}} e^{-C\delta_3 t}  + \delta_C^{\frac{4}{3}}e^{-Ct}\right) +  \delta_3^{\frac{4}{3}}\left(\delta_1^{\frac{4}{3}} e^{-C\delta_1 t}  + \delta_C^{\frac{4}{3}}e^{-Ct}\right)  \\
			& -\sum_{i\in\{1,3\}}\frac{1}{4}\alpha_i \delta_i\int_{0}^{1} \left|w_i\right|^2 dy_i - \sum_{i\in\{1,3\}} \frac{1}{2}\left(\bG_{1i} +\bG_{2i}  \right) \\
            &-  \sum_{i\in\{1,3\}}(\sigma^{[i]})^2M_i\delta_i\bar{w}_i^2 + \sum_{i\in\{1,3\}}\frac{3}{2}\alpha_i\delta_i\left(1+ \frac{2\kappa(\gamma-1)^2}{\mu R \gamma}\right)\bar{w}_i^2   \\
			& + \sum_{i\in\{1,3\}}C\delta^2_i\exp(-C\delta_it) \int_{\mathbb{R}}\eta\left(U|\bar{U}\right)dx+ \frac{C}{\delta_*t^2}\int_{\mathbb{R}}\eta\left(U|\bar{U}\right)dx.
		\end{aligned}
	\end{align*}
Let the $O(1)$ constants $M_i$ be taken as defined in \eqref{def:shift}, namely,
	\[M_i = \frac{3}{2}\frac{\alpha_i}{(\sigma^{[i]})^2} \left(1+ \frac{2\kappa(\gamma-1)^2}{\mu R \gamma}\right),\]
	we have 
		\begin{align*}
		\begin{aligned}
			& -\sum\limits_{i\in \left\{1, 3\right\}}\frac{\delta_i}{2M_i}|\dot{\bold{X}_i}|^{2} + \bold{B}_1 + \bold{B}_2 -\bold{G} -\frac{3}{4}\bold{D}  \\
			 \leq & \ C(\delta_0 + \varepsilon + \sqrt{\delta_0} )\sum_{i\in\{1,3\}} \mathcal{G}_i^S  + \delta_1^{\frac{4}{3}}\left(\delta_3^{\frac{4}{3}} e^{-C\delta_3 t}  + \delta_C^{\frac{4}{3}}e^{-Ct}\right)  +  \delta_3^{\frac{4}{3}}\left(\delta_1^{\frac{4}{3}} e^{-C\delta_1 t}  + \delta_C^{\frac{4}{3}}e^{-Ct}\right) \\
			& -\sum_{i\in\{1,3\}}\frac{1}{4}\alpha_i \delta_i\int_{0}^{1} \left|w_i\right|^2 dy_i - \sum_{i\in\{1,3\}} \frac{1}{2}\left(\bG_{1i} +\bG_{2i}  \right)  \\
            & +  \left(\sum_{i\in\{1,3\}}C\delta^2_i\exp(-C\delta_it) + \frac{C}{\delta_*t^2}\right)\int_{\mathbb{R}}\eta\left(U|\bar{U}\right)dx.
		\end{aligned}
	\end{align*}
Finally, by
\[\delta_i\int_{0}^{1} w_i^2 dy_i = \int_{\mathbb{R}} \left|(\widetilde{v}_i)_x\right| \left|\phi_i (u-\bar{u})\right|^2 dx\]
and 
\begin{align*}
	\begin{aligned}
		& \int_{\mathbb{R}} \left|(\widetilde{v}_i)_x\right| \left|\phi_i (v-\bar{v})\right|^2 dx   \\
		 \leq & \ 2\int_{\mathbb{R}} \left|(\widetilde{v}_i)_x\right| \left|\phi_i \left(u-\bar{u}-\frac{u-\bar{u}}{\sigma^{[i]}}\right)\right|^2 dx + 2\int_{\mathbb{R}} \left|(\widetilde{v}_i)_x\right| \left|\phi_i \frac{u-\bar{u}}{\sigma^{[i]}}\right|^2 dx  \\
		\leq & \ C\sqrt{\delta_i}\bG_{1i}+ C\int_{\mathbb{R}} \left|(\widetilde{v}_i)_x\right| \left|\phi_i (u-\bar{u})\right|^2 dx ,
	\end{aligned}
\end{align*}
and 
\begin{align*}
		\begin{aligned}
			& \int_{\mathbb{R}} \left|(\widetilde{v}_i)_x\right| \left|\phi_i (\theta-\bth)\right|^2 dx   \\
			\leq & \ 2\int_{\mathbb{R}} \left|(\widetilde{v}_i)_x\right| \left|\phi_i \left(\theta-\bth-\frac{(\gamma-1)\sigma^{[i]}}{\sigma^{[i]}v^{[i]}}(u-\bar{u})\right)\right|^2 dx + C\int_{\mathbb{R}} \left|(\widetilde{v}_i)_x\right| \left|\phi_i (u-\bar{u})\right|^2 dx  \\
			 \leq & \ C\sqrt{\delta_i}\bG_{2i}+ C\int_{\mathbb{R}} \left|(\widetilde{v}_i)_x\right| \left|\phi_i (u-\bar{u})\right|^2 dx ,
		\end{aligned}
	\end{align*}
we have 

\begin{align*}
	\begin{aligned}
		& -\sum\limits_{i\in \left\{1, 3\right\}}\frac{\delta_i}{2M_i}|\dot{\bold{X}_i}|^{2}+ \bold{B}_1 + \bold{B}_2 -\bold{G} -\frac{3}{4}\bold{D}  \\
		 \leq & \ C\sum_{i\in\{1,3\}}\int_{\mathbb{R}} \left|(\widetilde{v}_i)_x\right| \left|\phi_i (v-\bar{v}, u-\bar{u},\theta-\bth )\right|^2 dx  + \delta_1^{\frac{4}{3}}\left(\delta_3^{\frac{4}{3}} e^{-C\delta_3 t}  + \delta_C^{\frac{4}{3}}e^{-Ct}\right) \\ 
         & +  \delta_3^{\frac{4}{3}}\left(\delta_1^{\frac{4}{3}} e^{-C\delta_1 t}  + \delta_C^{\frac{4}{3}}e^{-Ct}\right) +  \left(\sum_{i\in\{1,3\}}C\delta^2_i\exp(-C\delta_it) + \frac{C}{\delta_*t^2}\right)\int_{\mathbb{R}}\eta\left(U|\bar{U}\right)dx.
	\end{aligned}
\end{align*}
\subsection{Proof of Lemma \ref{lm_zeroorder}}
From \eqref{est_RE} and \eqref{eq_XY}, we first deduce that
\begin{align*}
		{d \over dt} \int_{\mathbb{R}}a\bar{\theta}\eta\left(U|\bar{U}\right)dx 
	\leq &  -\sum_{i\in\{1,3\}}\frac{\delta_i}{2M_i}|\dot{\bold{X}}_i|^{2} + \bold{B}_1 + \bold{B}_2 -\bold{G} -\frac{3}{4}\bold{D}   \\
	&  -\sum_{i\in\{1,3\}}\frac{\delta_i}{2M_i}|\dot{\bold{X}}_i|^{2} + \sum_{i\in\{1,3\}}\dot{\bold{X}}_i\sum_{j=4}^{6}\bold{Y}_{ij}+ \sum_{i=3}^{6}\bold{B}_i + \bold{S}_1 + \bold{S}_2  -\frac{1}{4}\bold{D}(U).
\end{align*}
Using Lemma \ref{lma_part1} in conjunction with Young’s inequality, it follows that
\begin{align}\label{est_total}
	\begin{aligned}
		 	{d \over dt} \int_{\mathbb{R}}a\bar{\theta}\eta\left(U|\bar{U}\right)dx  \leq&  C\sum_{i\in\{1,3\}}\mathcal{G}_i^{S}+ \delta_1^{\frac{4}{3}}\left(\delta_3^{\frac{4}{3}} e^{-C\delta_3 t}  + \delta_C^{\frac{4}{3}}e^{-Ct}\right) +  \delta_3^{\frac{4}{3}}\left(\delta_1^{\frac{4}{3}} e^{-C\delta_1 t}  + \delta_C^{\frac{4}{3}}e^{-Ct}\right)  \\
		& +  \left(\sum_{i\in\{1,3\}}C\delta^2_i\exp(-C\delta_it) + \frac{C}{\delta_*t^2}\right)\int_{\mathbb{R}}\eta\left(U|\bar{U}\right)dx   -\sum_{i\in\{1,3\}}\frac{\delta_i}{4M_i}|\dot{\bold{X}_i}|^{2}   \\
		&+  \sum_{i\in\{1,3\}}\frac{C}{\delta_i}\sum_{j=4}^{6}\bold{Y}_{ij}^2+ \sum_{i=3}^{6}\bold{B}_i + \bold{S}_1 + \bold{S}_2  -\frac{1}{4}\bold{D}(U).
	\end{aligned}
\end{align}
	\noindent$\bullet$ Estimates on the terms $\bold{Y}_{ij}(j=4,5,6)$: 	
From \eqref{eq:Phiexpan} and \eqref{est:vtheta}, we obtain
\begin{align*}
	\begin{aligned}
		\left|(\bold{Y}_{i4} , 	\bold{Y}_{i5})\right|&  \leq C \int_{\mathbb{R}} |(\widetilde{v}_i)_x|\left|(v-\bar{v}, \theta-\bth)\right|^2 dx  \\
		&  \leq C \int_{\mathbb{R}} |(\widetilde{v}_i)_x|\left| \phi_i(v-\bar{v}, \theta-\bth)\right|^2 dx  + C \int_{\mathbb{R}} |(\widetilde{v}_i)_x|\left|(1-\phi_i^2)(v-\bar{v}, \theta-\bth)\right|^2 dx   \\
		& \leq C\left(\mathcal{G}_i^S + \delta^2_i\exp(-C\delta_it)  \int_{\mathbb{R}}\eta\left(U|\bar{U}\right)dx\right).
	\end{aligned}
\end{align*}
Furthermore, Lemma \ref{lem:vs} together with \eqref{apriori_small} implies that
\[	\left|(\bold{Y}_{i4} , 	\bold{Y}_{i5})\right| \leq C\delta_i^2\varepsilon^2 .\]

Thus,
\[	\frac{C}{\delta_i}\left|(\bold{Y}_{i4} , 	\bold{Y}_{i5})\right|^2 \leq C\delta_i\varepsilon^2\left(\mathcal{G}_i^S + \delta^2_i\exp(-C\delta_it)  \int_{\mathbb{R}}\eta\left(U|\bar{U}\right)dx\right).\]

Similarly, we have 
\begin{align*}
	\begin{aligned}
		\frac{C}{\delta_i} |\bold{Y}_{i6}|^2   & \leq \frac{C}{\delta_i} \left( \int_{\mathbb{R}} |(a_i)_x|\left|(v-\bar{v}, u-\bar{u}, \theta-\bth)\right|^2 dx \right)^2  \\
		& \leq \frac{C}{\delta_i^2} \left( \int_{\mathbb{R}} |(\widetilde{v}_i)_x|\left|(v-\bar{v}, u-\bar{u}, \theta-\bth)\right|^2 dx \right)^2  \\
		& \leq C\left\| (v-\bar{v}, u-\bar{u}, \theta-\bth)\right\|^2_{L^2(\mathbb{R})}\int_{\mathbb{R}} |(\widetilde{v}_i)_x|\left|(v-\bar{v}, u-\bar{u}, \theta-\bth)\right|^2 dx    \\
		& \leq C\varepsilon^2\int_{\mathbb{R}} |(\widetilde{v}_i)_x|\left|\phi_i(v-\bar{v}, u-\bar{u}, \theta-\bth)\right|^2 dx  \\ 
        & \quad \ + \int_{\mathbb{R}} |(\widetilde{v}_i)_x|(1-\phi_i^2)\left|(v-\bar{v}, u-\bar{u}, \theta-\bth)\right|^2 dx  \\
		& \leq C\varepsilon^2\left(\mathcal{G}_i^S + \delta^2_i\exp(-C\delta_it)  \int_{\mathbb{R}}\eta\left(U|\bar{U}\right)dx\right).
	\end{aligned}
\end{align*}
In short,
\begin{align}\label{est_Yij}
		\frac{C}{\delta_i} \sum_{j=4}^{6}|\bold{Y}_{ij}|^2  \leq  C\varepsilon\left(\mathcal{G}_i^S + \delta^2_i\exp(-C\delta_it)  \int_{\mathbb{R}}\eta\left(U|\bar{U}\right)dx\right).
\end{align}

	\noindent$\bullet$ (Estimates on the terms $\bold{B}_{i} \ (i=3, 4, 5, 6)$): 	

First, we split $\bold{B}_3 $ as
\begin{align*}
     \bold{B}_3 = & -\int_{\mathbb{R}}  a_x\left[\mu \left(\frac{u_x}{v}-\frac{\bar{u}_x}{\bar{v}_x}\right)(u-\bar{u}) +\kappa\frac{\theta -\bar{\theta}}{\theta}\left(\frac{\theta_x}{v}-\frac{\bar{\theta}_x}{\bar{v}}\right)\right]dx  \\
   =&  - \sum_{i\in\{1,3\}}\int_{\mathbb{R}} (a_i)_x\left[\mu \left(\frac{u_x}{v}-\frac{\bar{u}_x}{\bar{v}_x}\right)(u-\bar{u}) +\kappa\frac{\theta -\bar{\theta}}{\theta}\left(\frac{\theta_x}{v}-\frac{\bar{\theta}_x}{\bar{v}}\right)\right]dx :=   \sum_{i\in\{1,3\}}\bold{B}_{3i} .
\end{align*}
Then, we have
  \begin{align*}
  	\begin{aligned}
  	    \bold{B}_{3i}  \leq C&  \int_{\mathbb{R}} |( a_i)_x||u-\bar{u}|\left(|(u-\bar{u})_x| + |\bar{u}_x||v-\bar{v}|\right) dx \\
  	  &   + \int_{\mathbb{R}} |( a_i)_x||\theta-\bth|\left(|(\theta-\bth)_x| + |\bth_x||v-\bar{v}|\right) dx \\
  	  \leq & \frac{1}{80} \bD_{u_1}+  \int_{\mathbb{R}} |( a_i)_x|^2\left(|u-\bar{u}|^2 + |\theta-\bth|^2\right)dx  + C \int_{\mathbb{R}} \left(|\bar{u}_x|^2 + |\bth_x|^2\right)|v-\bar{v}|^2 dx  \\
  	  \leq &  \frac{1}{80} \bD_{u_1} +  C\delta_i \int_{\mathbb{R}} |( \widetilde{v}_i)_x|\left(|u-\bar{u}|^2 + |\theta-\bth|^2\right)dx  + C\int_{\mathbb{R}} \left(|\bar{u}_x|^2 + |\bth_x|^2\right)|v-\bar{v}|^2 dx   \\
  	  \leq& \frac{1}{80} \bD_{u_1} +  C\delta_i \left(\mathcal{G}_i^S + \delta^2_i\exp(-C\delta_it)  \int_{\mathbb{R}}\eta\left(U|\bar{U}\right)dx\right) + C\int_{\mathbb{R}} \left(|\bar{u}_x|^2 + |\bth_x|^2\right)|v-\bar{v}|^2 dx .
  	\end{aligned}
  \end{align*}
Using \eqref{est:vcdproperty} and \eqref{est:shock_prop} yield
\begin{align*}
& 	\int_{\mathbb{R}} \left(|\bar{u}_x|^2 + |\bth_x|^2\right)|v-\bar{v}|^2 dx   \\
	 \leq & \ 	\int_{\mathbb{R}} \left(|(\widetilde{u}_1)_x|^2  + |u^D_x|^2 + |(\widetilde{u}_3)_x|^2+|(\widetilde{\theta}_1)_x|^2 + |\theta^D_x|^2 + |(\widetilde{\theta}_3)_x|^2 \right)|v-\bar{v}|^2 dx  \\
	\leq & \ C\sum_{i\in\{1,3\}} \left(\delta^2_i \mathcal{G}_i^S + \delta^2_i\exp(-C\delta_it)  \int_{\mathbb{R}}\eta\left(U|\bar{U}\right)dx \right)+  C\delta_C(1+t)^{-1} \int_{\mathbb{R}}e^{-\frac{2C_1|x|^2}{1+t}}|v-\bar{v}|^2 dx.
\end{align*}

\noindent Thus, we have 
\begin{align*}
\bB_3 \leq & \frac{1}{40} \bD_{u_1} + C\sum_{i\in\{1,3\}} (\delta_i  + \delta^2_i) \mathcal{G}_i^S + C\sum_{i\in\{1,3\}}\delta^2_i\exp(-C\delta_it)  \int_{\mathbb{R}}\eta\left(U|\bar{U}\right)dx \\ 
& +  C\delta_C(1+t)^{-1} \int_{\mathbb{R}}e^{-\frac{2C_1|x|^2}{1+t}}|v-\bar{v}|^2 dx.
\end{align*}
Likewise, we have 
\begin{align*}
	\bB_4 \leq& \int_{\mathbb{R}}\left[|\bar{u}_x||v-\bar{v}||(u-\bar{u})_x| + |\theta-\bth|\left(|(\theta-\bth)_x| + |\bth_x|\right)\left(|(\theta-\bth)_x + |\bth_x|||v-\bar{v}|\right)\right]  dx\\
	&+ \int_{\mathbb{R}}\left[|(\theta-\bth)_x||\bth_x||v-\bar{v}| + |\theta-\bth|\left(|(u-\bar{u})_x| ^2  + |\bar{u}_x||(u-\bar{u})_x| +  |\bar{u}_x|^2||v-\bar{v}| \right)\right]  dx,  \\
	\leq  & \frac{1}{80} \bD_{u_1}+ C\sum_{i\in\{1,3\}}  \delta^2_i \mathcal{G}_i^S + \sum_{i\in\{1,3\}}\delta^2_i\exp(-C\delta_it)  \int_{\mathbb{R}}\eta\left(U|\bar{U}\right)dx \\ 
    & +  C\delta_C(1+t)^{-1} \int_{\mathbb{R}}e^{-\frac{2C_1|x|^2}{1+t}}|(v-\bar{v},\theta-\bth)|^2 dx.
\end{align*}
To estimate $\bB_5$ and $\bB_6$ in the final step, we first demonstrate that $\bB_5$ is bounded by $C\bB_6$.

Note that from Lemma \ref{lem:vcd} and \ref{lem:vs},
\begin{equation*}
\left|\left(\frac{\bar{\theta}_x}{\bar{v}}\right)_x\right|, \left|\bar{u}_x^2\right| \leq C\delta_0(|(\widetilde{v}_1)_x|+|(\widetilde{v}_3)_x|)+C\delta_C(1+t)^{-1}e^{-\frac{C_1x^2}{1+t}}.
\end{equation*}
Then 
\begin{align*}
\bB_5 \leq& C\delta_0\int_{\mathbb{R}} |\theta-\bar{\theta}|^2 \,dx + C\delta_C(1+t)^{-1}\int_{\mathbb{R}} ae^{-\frac{C_1|x|^2}{1+t}}|\theta-\bth|^2 dx\leq C\bB_6.
\end{align*}
Using \eqref{est:shock_prop}, we have 
\begin{align*}
      	 \bold{B}_6 =& C\delta_C(1+t)^{-1} \int_{\mathbb{R}} ae^{-\frac{C_1|x|^2}{1+t}}|(v-\bar{v},\theta-\bth)|^2 dx   \\
      	&+  C(\delta_0 + \varepsilon)\int_{\mathbb{R}}\left(|(\widetilde{v}_1)_x| + |(\widetilde{v}_3)_x|\right)|(v-\bar{v},\theta-\bth)|^2  dx \\ 
      	\leq & C(\delta_0 + \varepsilon) \sum_{i\in\{1,3\}} \mathcal{G}_i^S + \sum_{i\in\{1,3\}}\delta^2_i\exp(-C\delta_it)  \int_{\mathbb{R}}\eta\left(U|\bar{U}\right)dx \\ 
        & +  C\delta_C(1+t)^{-1} \int_{\mathbb{R}}e^{-\frac{2C_1|x|^2}{1+t}}|(v-\bar{v},\theta-\bth)|^2 dx.
\end{align*}

Therefore, we have
\begin{align}\label{est_Bi}
	\begin{aligned}
		\sum_{i=3}^{6} \bB_i\leq & \frac{1}{20} \bD_{u_1} +C(\delta_0 + \sqrt{\delta_0} +\varepsilon) \sum_{i\in\{1,3\}}  \mathcal{G}_i^S + \sum_{i\in\{1,3\}}\delta^2_i\exp(-C\delta_it)  \int_{\mathbb{R}}\eta\left(U|\bar{U}\right)dx  \\
		& +  C\delta_C(1+t)^{-1} \int_{\mathbb{R}}e^{-\frac{2C_1|x|^2}{1+t}}|(v-\bar{v},\theta-\bth)|^2 dx.
	\end{aligned}
\end{align}
	\noindent$\bullet$ Estimates on the terms $\bS_{i}(i=1,2)$: 	
It follows from \eqref{est:QC} that
\begin{align}\label{est_Q12}
	\|Q_1^C\|_{L^2(\mathbb{R})} \leq C\delta_C(1+t)^{-\frac{5}{4}}, \qquad \|Q_2^C\|_{L^2(\mathbb{R})} \leq C\delta_C(1+t)^{-\frac{7}{4}}.
\end{align}
Thus, it follows from Cauchy-Schwartz inequality,  Lemma \ref{lm:interaction} and \eqref{est_Q12} that
\begin{align}\label{est_S12}
	\begin{aligned}
		\bold{S}_1 + \bold{S}_2= &-\int_{\mathbb{R}} a Q_1(u-\bar{u}) dx  -\int_{\mathbb{R}} a\left(\frac{\theta}{\bar{\theta}}-1\right)Q_2 dx  \\
		\leq & C\int_{\mathbb{R}} (|Q^I_1| + |Q^C_1|)|u-\bar{u}| dx + C\int_{\mathbb{R}} (|Q^I_2| + |Q^C_2|)|\theta-\bth|dx \\
		\leq &  C(\|Q^I_1\|_{L^2(\mathbb{R})} + \|Q^C_1\|_{L^2(\mathbb{R})})\|u-\bar{u}\|_{L^2(\mathbb{R})} + C (\|Q^I_2\|_{L^2(\mathbb{R})} + \|Q^C_2\|_{L^2(\mathbb{R})})\|\theta-\bth\|_{L^2(\mathbb{R})} \\
		\leq & \ C\left[\delta_1(\delta_C + \delta_3)e^{-C\delta_1 t} + \delta_3(\delta_1 + \delta_C)e^{-C\delta_3 t}\right]\|(u-\bar{u}, \theta-\bth)\|_{L^2(\mathbb{R})} \\ 
        &+C\delta_C\left[(\delta_1 + \delta_3)e^{-C t}+(1+t)^{-\frac{5}{4}}\right]\|(u-\bar{u}, \theta-\bth)\|_{L^2(\mathbb{R})}. 
	\end{aligned}
\end{align}

	\noindent$\bullet$ (Summary): Using \eqref{est_total}, \eqref{est_Yij}, \eqref{est_Bi}, and \eqref{est_S12}, together with the smallness of $\delta_0, \delta_i, \delta_C, \varepsilon$, we get that
\begin{align*}
\begin{aligned}
		& {d \over dt} \int_{\mathbb{R}}a\bar{\theta}\eta\left(U|\bar{U}\right)dx    \\
		\leq &-\sum_{i\in\{1,3\}}\frac{\delta_i}{4M_i}|\dot{\bold{X}_i}|^{2}    - \frac{C}{2}\sum_{i\in\{1,3\}}\mathcal{G}_i^{S} -\frac{1}{8}\bold{D}(U) + C\delta_C(1+t)^{-1} \int_{\mathbb{R}}e^{-\frac{2C_1|x|^2}{1+t}}|(v-\bar{v},\theta-\bth)|^2 dx  \\
		& + \delta_1^{\frac{4}{3}}\left(\delta_3^{\frac{4}{3}} e^{-C\delta_3 t}  + \delta_C^{\frac{4}{3}}e^{-Ct}\right) +  \delta_3^{\frac{4}{3}}\left(\delta_1^{\frac{4}{3}} e^{-C\delta_1 t}  + \delta_C^{\frac{4}{3}}e^{-Ct}\right) \\ 
        & +  \left(\sum_{i\in\{1,3\}}C\delta^2_i\exp(-C\delta_it) + \frac{C}{t^2}\right)\int_{\mathbb{R}}a\bar{\theta}\eta\left(U|\bar{U}\right)dx   \\
		& +C\left[\delta_1(\delta_C + \delta_3)e^{-C\delta_1 t} + \delta_3(\delta_1 + \delta_C)e^{-C\delta_3 t}\right]\|(u-\bar{u}, \theta-\bth)\|_{L^2(\mathbb{R})} \\ 
        & +C\delta_C\left[(\delta_1 + \delta_3)e^{-C t}+(1+t)^{-\frac{5}{4}}\right]\|(u-\bar{u}, \theta-\bth)\|_{L^2(\mathbb{R})}.
	\end{aligned}
\end{align*}		
By applying Grönwall’s inequality, we obtain that for all $t \ge 1$,
\begin{align*}
	\begin{aligned}
		&  \int_{\mathbb{R}}a\bar{\theta}\eta\left(U|\bar{U}\right)dx + \int_{1}^{t}  \left(\sum_{i\in\{1,3\}}\frac{\delta_i}{4M_i}|\dot{\bold{X}_i}|^{2}    + \frac{C}{2}\sum_{i\in\{1,3\}}\mathcal{G}_i^{S} +\frac{1}{8}\bold{D}\right) \\
	    \leq  & \ \bigg[ \int_{\mathbb{R}}a\bar{\theta}\eta\left(U|\bar{U}\right)dx\big|_{t=1} +C\delta_C\int_{1}^{t}  (1+s)^{-1} \int_{\mathbb{R}}e^{-\frac{2C_1|x|^2}{1+s}}|(v-\bar{v},\theta-\bth)|^2 dxds   \\
	    &  \ \ \ + C(\delta_1 + \delta_C + \delta_3)\|(u-\bar{u}, \theta-\bth)\|_{L^{\infty}(0,T;L^2(\mathbb{R}))} \\ 
        & \ \ \ + C\delta_1^{\frac{4}{3}}\left(\delta_3^{\frac{4}{3}} e^{-C\delta_3 t}  + \delta_C^{\frac{4}{3}}e^{-Ct}\right) + C \delta_3^{\frac{4}{3}}\left(\delta_1^{\frac{4}{3}} e^{-C\delta_1 t}  + \delta_C^{\frac{4}{3}}e^{-Ct}\right)\bigg]     \\
		&  \quad \cdot \exp \left(\int_{1}^{t}\bigg(\sum_{i\in\{1,3\}}C\delta^2_i\exp(-C\delta_i s ) + \frac{C}{s^2}\bigg)ds \right)  \\
		 	\leq & \ \int_{\mathbb{R}}a\bar{\theta}\eta\left(U|\bar{U}\right)dx\big|_{t=1} +C\delta_C\int_{1}^{t}  (1+s)^{-1} \int_{\mathbb{R}}e^{-\frac{2C_1|x|^2}{1+s}}|(v-\bar{v},\theta-\bth)|^2 dxds   \\
		&  + C(\delta_1 + \delta_C + \delta_3)\|(u-\bar{u}, \theta-\bth)\|_{L^{\infty}(0,T;L^2(\mathbb{R}))}  \\
        & + C\delta_1^{\frac{4}{3}}\left(\delta_3^{\frac{4}{3}} e^{-C\delta_3 t}  + \delta_C^{\frac{4}{3}}e^{-Ct}\right) +  C\delta_3^{\frac{4}{3}}\left(\delta_1^{\frac{4}{3}} e^{-C\delta_1 t}  + \delta_C^{\frac{4}{3}}e^{-Ct}\right).     \\
	\end{aligned}
\end{align*}	

Subsequently, by observing that
\[\|U-\bar{U}\|^2_{L^2(\mathbb{R})} \sim  \int_{\mathbb{R}}a\bar{\theta}\eta\left(U|\bar{U}\right)dx, \qquad  \forall t \in [0,T], \]
we obtain
\begin{align}\label{est_bigtime}
	\begin{aligned}
		&  \int_{\mathbb{R}}a\bar{\theta}\eta\left(U|\bar{U}\right)dx + \int_{1}^{t}  \left(\sum_{i\in\{1,3\}}\frac{\delta_i}{4M_i}|\dot{\bold{X}_i}|^{2}    + \frac{C}{2}\sum_{i\in\{1,3\}}\mathcal{G}_i^{S} +\frac{1}{8}\bold{D}\right) \\
		\leq & \ \int_{\mathbb{R}}a\bar{\theta}\eta\left(U|\bar{U}\right)dx\big|_{t=1} +C\delta_C\int_{1}^{t}  (1+s)^{-1} \int_{\mathbb{R}}e^{-\frac{2C_1|x|^2}{1+s}}|(v-\bar{v},\theta-\bth)|^2 dxds   \\
		& \ + \delta_1^{\frac{4}{3}}\left(\delta_3^{\frac{4}{3}} e^{-C\delta_3 t}  + \delta_C^{\frac{4}{3}}e^{-Ct}\right) +  \delta_3^{\frac{4}{3}}\left(\delta_1^{\frac{4}{3}} e^{-C\delta_1 t}  + \delta_C^{\frac{4}{3}}e^{-Ct}\right).
	\end{aligned}
\end{align}

	\subsection{Estimate in small time}
From \eqref{est_RE} and \eqref{eq_XY}, we have
		\begin{align}\label{est_smalltime}
			{d \over dt}& \int_{\mathbb{R}}a\bar{\theta}\eta\left(U|\bar{U}\right)dx  \leq\mathcal{R}
		\end{align}

where \begin{align*}
	\mathcal{R} =  -\sum_{i\in\{1,3\}}\frac{\delta_i}{M_i}|\dot{\bold{X}}_i|^{2} + \sum_{i\in\{1,3\}}\left(\dot{\bold{X}}_i\sum_{j=4}^{6}\bold{Y}_{ij}\right) + \sum_{i=1}^{6}\bold{B}_i + \bold{S}_1 + \bold{S}_2 -\bold{G} -\bold{D},
\end{align*}
Applying Young’s inequality, we first obtain
\begin{align*}
		\mathcal{R}+ \sum_{i\in\{1,3\}}\frac{\delta_i}{4M_i}|\dot{\bold{X}}_i|^{2}  + \bold{D} + \mathcal{G}^S \leq   \sum_{i\in\{1,3\}}\left(\frac{C}{\delta_i}\sum_{j=4}^{6}|\bold{Y}_{ij}|^2\right) + \sum_{i=1}^{6}\bold{B}_i + \bold{S}_1 + \bold{S}_2 ,
\end{align*}
Using \eqref{apriori_small}, Lemma \ref{est:shock_prop}, and \eqref{est:ax} we have
\begin{align*}
	\begin{aligned}
		\sum_{j=4}^{6}|\bold{Y}_{ij}|\leq  &C\|(\widetilde{\theta}_i)_x\|_{L^{\infty}(\mathbb{R})} (\|v-\bar{v}\|^2_{L^2(\mathbb{R})} +(\|\theta-\bth\|^2_{L^2(\mathbb{R})} )  \\
		& + C\|(a_i)_x\|_{L^{\infty}(\mathbb{R})} (\|v-\bar{v}\|^2_{L^2(\mathbb{R})} + \|u-\bar{u}\|^2_{L^2(\mathbb{R})} +(\|\theta-\bth\|^2_{L^2(\mathbb{R})} ) \\
		\leq & C\varepsilon^2\delta^2_i,
	\end{aligned}
\end{align*}
which yields
\begin{align*}
	\sum_{i\in\{1,3\}}\left(\frac{C}{\delta_i}\sum_{j=4}^{6}|\bold{Y}_{ij}|^2\right)  \leq C\varepsilon^4\sum_{i\in\{1,3\}}\delta^3_i.
\end{align*}
Similarly, we have 
\begin{align*}
	\begin{aligned}
		\sum_{i=1}^{6}\bold{B}_i  \leq &  C\left(\sum_{i\in\{1,3\}}\|(a_i)_x\|_{L^{\infty}(\mathbb{R})}+\|(\bar{v})_x\|_{L^{\infty}(\mathbb{R})} +\|(\bar{u})_x\|_{L^{\infty}(\mathbb{R})} + \|(\bth)_x\|_{L^{\infty}(\mathbb{R})}\right) \\
		& \ \ \ \  \cdot\left(\|v-\bar{v}\|^2_{L^2(\mathbb{R})} + \|u-\bar{u}\|^2_{H^1(\mathbb{R})} +\|\theta-\bth\|^2_{H^1(\mathbb{R})}\right)  + \frac{1}{80} \bD \\
		\leq  & C\varepsilon^2\left(\sum_{i\in\{1,3\}}\delta^2_i + \delta_C\right)+ \frac{1}{80} \bD ,
	\end{aligned}
\end{align*}
and 
\begin{align*}
	\begin{aligned}
	\bold{S}_1 + \bold{S}_2
	\leq &  C(\|Q^I_1\|_{L^2(\mathbb{R})} + \|Q^C_1\|_{L^2(\mathbb{R})})\|u-\bar{u}\|_{L^2(\mathbb{R})} + C (\|Q^I_2\|_{L^2(\mathbb{R})} + \|Q^C_2\|_{L^2(\mathbb{R})})\|\theta-\bth\|_{L^2(\mathbb{R})}  \\
	\leq & C(\delta_1 +\delta_3 + \delta_C )\varepsilon,
\end{aligned}
\end{align*}
and 
\begin{align*}
	\mathcal{G}_i^S \leq C\sum_{i\in\{1,3\}}\|(\widetilde{v}_i)_x\|_{L^{\infty}(\mathbb{R})} (\|v-\bar{v}\|^2_{L^2(\mathbb{R})} + \|u-\bar{u}\|^2_{L^2(\mathbb{R})} +(\|\theta-\bth\|^2_{L^2(\mathbb{R})} ) 
	\leq C\varepsilon^2 \delta^2_i.
\end{align*}
Accordingly, the preceding estimates yield a preliminary bound, valid for any $\delta_1, \delta_C, \delta_3\in (0,\delta_0)$, 

\begin{align}\label{est_small}
	\mathcal{R}+ \sum_{i\in\{1,3\}}\frac{\delta_i}{4M}|\dot{\bold{X}_i}|^{2}  + \bold{D} + \sum_{i\in\{1,3\}}\mathcal{G}_i^S \leq  C\delta_0, \qquad t>0.
\end{align}

\subsection{Conclusion}

\noindent As a first step, we use \eqref{est_smalltime} together with \eqref{est_small} to obtain a coarse estimate for small time  $t\leq 1$：
	\begin{align*}
	{d \over dt}& \int_{\mathbb{R}}a\bar{\theta}\eta\left(U|\bar{U}\right)dx + \sum_{i\in\{1,3\}}\frac{\delta_i}{4M_i}|\dot{\bold{X}_i}|^{2}  + \bold{D} + \sum_{i\in\{1,3\}}\mathcal{G}_i^S\leq C\delta_0,
\end{align*}
which implies
	\begin{align}
    \begin{aligned} \label{est_smalltime1}
& \int_{\mathbb{R}}a\bar{\theta}\eta\left(U|\bar{U}\right)dx\bigg|_{t=1} + \int_{0}^{1}\left(\sum_{i\in\{1,3\}}\frac{\delta_i}{4M_i}|\dot{\bold{X}_i}|^{2}  + \bold{D} + \sum_{i\in\{1,3\}}\mathcal{G}_i^S\right) \\ 
\leq & \int_{\mathbb{R}}a\bar{\theta}\eta\left(U|\bar{U}\right)dx\bigg|_{t=0}+  C\delta_0.
\end{aligned}
\end{align}
Combining \eqref{est_bigtime} and \eqref{est_smalltime1}, we conclude that
\begin{align}\label{est:combine}
	\begin{aligned}
		&  \int_{\mathbb{R}}a\bar{\theta}\eta\left(U|\bar{U}\right)dx + \int_{0}^{t}  \left(\sum_{i\in\{1,3\}} \delta_i|\dot{\bold{X}_i}|^{2}    + \sum_{i\in\{1,3\}}\mathcal{G}_i^{S} +\bold{D}\right) \\
			\leq & \ \int_{\mathbb{R}}a\bar{\theta}\eta\left(U|\bar{U}\right)dx\big|_{t=0} +C\delta_0\int_{1}^{t}  (1+s)^{-1} \int_{\mathbb{R}}e^{-\frac{2C_1|x|^2}{1+s}}|(v-\bar{v},\theta-\bth)|^2 dxds   \\
		&+ C\delta_1^{\frac{4}{3}}\left(\delta_3^{\frac{4}{3}} e^{-C\delta_3 t}  + \delta_C^{\frac{4}{3}}e^{-Ct}\right) +C  \delta_3^{\frac{4}{3}}\left(\delta_1^{\frac{4}{3}} e^{-C\delta_1 t}  + \delta_C^{\frac{4}{3}}e^{-Ct}\right)+ C\delta_0.     \\
	\end{aligned}
\end{align}	
In the end, an application of the following Lemma \ref{lm_spacetimeCD} (with small $\delta_0$) yields
\begin{align*}
	\begin{aligned}
		&  \int_{\mathbb{R}}a\bar{\theta}\eta\left(U|\bar{U}\right)dx + \int_{0}^{t}  \left(\sum_{i\in\{1,3\}} \delta_i|\dot{\bold{X}_i}|^{2}    + \sum_{i\in\{1,3\}}\mathcal{G}_i^{S} +\bold{D}\right) ds \\
		\leq  & \ \int_{\mathbb{R}}a\bar{\theta}\eta\left(U|\bar{U}\right)dx\big|_{t=0} 
+ C\delta_1^{\frac{4}{3}}\left(\delta_3^{\frac{4}{3}} e^{-C\delta_3 t}  + \delta_C^{\frac{4}{3}}e^{-Ct}\right) +  C\delta_3^{\frac{4}{3}}\left(\delta_1^{\frac{4}{3}} e^{-C\delta_1 t}  + \delta_C^{\frac{4}{3}}e^{-Ct}\right)    \\
	& \ + C\delta_0 \int_{0}^{t}\left\|(v-\bar{v})_x\right\|^2ds+ C\delta_0, \qed     
	\end{aligned}
\end{align*}	
we have thus proved Lemma~\ref{lm_zeroorder}.
The lemma below is employed to bound the second term on the right-hand side of \eqref{est:combine}. Its proof is essentially the same as \cite[Appendix C]{KVW-NSF} and is therefore omitted.
\begin{lemma}\label{lm_spacetimeCD}
	It holds that for any $t\in (0, T]$,
	\begin{align}\label{est_spacetimeCD}
		\begin{aligned}
			&\int_{0}^{t}(1+s)^{-1}\int_{\mathbb{R}}e^{-\frac{2C_1|x|^2}{1+s}}|(v-\bar{v}, u-\bar{u}, \theta-\bth)|^2 dxds  \\
			\leq & \ C\sup\limits_{s\in\left[0, T\right]} \left\|U-\bar{U}\right\|_{L^2(\mathbb{R})}^2 + C\sum_{i\in\{1,3\}}\delta_i\int_{0}^{t}\left|\dot{\bold{X}}_i(s)\right|^2 ds+ C\int_{0}^{t}\left(\mathcal{G}_1^{S}  + \mathcal{G}_3^{S} + \bold{D}(U) \right)ds   \\
			& + C\int_{0}^{t}\left\|(v-\bar{v})_x\right\|_{L^2(\mathbb{R})}^2ds + C\delta_0.
		\end{aligned}
	\end{align}
\end{lemma}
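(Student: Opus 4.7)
The plan is to reduce the weighted space--time integral on the left-hand side of \eqref{est_spacetimeCD} to quantities controlled either by the a priori supremum norm or by the good/dissipation terms already present on the right-hand side. The crux is the elementary algebraic identity
\[
\frac{W(s,x)}{1+s} \;=\; 2\,\partial_{s}W(s,x) \;-\; \frac{1}{4C_{1}}\,\partial_{x}^{2}W(s,x),\qquad W(s,x):=e^{-2C_{1}x^{2}/(1+s)},
\]
obtained by direct computation from $W_{s}=\tfrac{2C_{1}x^{2}}{(1+s)^{2}}W$ and $W_{xx}=\bigl(\tfrac{16C_{1}^{2}x^{2}}{(1+s)^{2}}-\tfrac{4C_{1}}{1+s}\bigr)W$. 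Writing $\vec{W}:=(v-\bar v,u-\bar u,\theta-\bar\theta)$, this splits the quantity
$\mathcal L:=\int_{0}^{t}\!\int_{\R}\tfrac{W}{1+s}|\vec W|^{2}\,dx\,ds$
into a ``spatial piece'' $-\tfrac{1}{4C_{1}}\!\int_{0}^{t}\!\int W_{xx}|\vec W|^{2}$ and a ``temporal piece'' $2\!\int_{0}^{t}\!\int W_{s}|\vec W|^{2}$.

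The spatial piece is the easy one: integrating by parts once in $x$ yields
$\tfrac{1}{2C_{1}}\!\int_{0}^{t}\!\int W_{x}\,\vec W\cdot\vec W_{x}$, and the elementary pointwise bound $|W_{x}|\le C(1+s)^{-1/2}W^{1/2}$ (verified by $|x|W^{1/2}/\sqrt{1+s}\le C$) together with Cauchy--Schwarz in $(x,s)$ and Young's inequality absorbs $\tfrac14\mathcal L$ into the left-hand side and leaves the residual $C\!\int_{0}^{t}\bigl(\|(v-\bar v)_{x}\|^{2}+\bold{D}(U)\bigr)\,ds$, which is precisely what appears on the right-hand side of \eqref{est_spacetimeCD}.

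For the temporal piece, I would integrate by parts in time to produce the boundary contribution $2\bigl[\!\int W|\vec W|^{2}\,dx\bigr]_{0}^{t}\le C\sup_{[0,T]}\|U-\bar U\|^{2}$ plus an interior term $-4\!\int_{0}^{t}\!\int W\vec W\cdot\vec W_{s}$, into which I substitute the perturbation equations \eqref{eq:DNSF}. The shift contributions $\dot{\bX}_{i}(\widetilde v_{i},\widetilde u_{i},\widetilde\theta_{i})_{x}$ are split via $1=\phi_{i}+\phi_{j}$: the $\phi_{i}$-part is absorbed into $\mathcal G_{i}^{S}$ by Young's inequality producing $\delta_{i}|\dot{\bX}_{i}|^{2}+\mathcal G_{i}^{S}$, and the off-support $(1-\phi_{i})$-part is exponentially small by Lemma~\ref{lem:cutoffinteraction}. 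The $v$-component $-4\!\int W(v-\bar v)(u-\bar u)_{x}$ is handled by undoing one more time-IBP using the continuity equation $\eqref{eq:DNSF}_{1}$: this yields a term $+\mathcal L_{v}$ that cancels against the $v$-part of $\mathcal L$ itself, modulo boundary and good terms. The source contributions $Q_{1},Q_{2}$ contribute at most $C\delta_{0}$ by virtue of \eqref{est_Q12}.

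The main obstacle is the treatment of the $(p-\bar p)_{x}$ and viscous fluxes from the momentum and energy equations when substituted into $-4\!\int W(u-\bar u,\theta-\bar\theta)\cdot\bigl((u-\bar u)_{s},(\theta-\bar\theta)_{s}\bigr)$; after one integration by parts in $x$ they produce cross terms such as $\int W_{x}(u-\bar u)(p-\bar p)$ and $\int W(u-\bar u)_{x}(p-\bar p)$ whose naive bounds would introduce an unacceptable linear growth in $t$. I expect to close them by writing $p-\bar p=\tfrac{R(\theta-\bar\theta)}{v}-\tfrac{\bar p(v-\bar v)}{v}$ and applying Young's inequality separately to each piece, so that the $(v-\bar v)$-contribution is absorbed by $\int_{0}^{t}\|(v-\bar v)_{x}\|^{2}ds$ on the right-hand side, the $(\theta-\bar\theta)$-contribution by $\int_{0}^{t}\bold{D}(U)ds$, and the leftover $W$-weighted $L^{2}$-norms by $\tfrac14\mathcal L$ via the pointwise bound $W\le 1$ combined with the extra $(1+s)^{-1/2}$ decay coming from $|W_{x}|$. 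Once all estimates are collected, the smallness of $\delta_{0},\varepsilon$ allows absorption of the remaining $\mathcal L$-multiples, yielding \eqref{est_spacetimeCD}.
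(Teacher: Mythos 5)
Your parabolic identity $\tfrac{W}{1+s}=2W_s-\tfrac{1}{4C_1}W_{xx}$ is correct, and the treatment of the spatial piece is sound: integrating by parts in $x$, using $|W_x|\le C(1+s)^{-1/2}W^{1/2}$, and applying Young's inequality absorbs a fraction of $\mathcal L$ and leaves exactly $\int_0^t(\|(v-\bar v)_x\|^2+\bold D)\,ds$. The temporal piece, however, has a genuine gap. First, the proposed cancellation for the $v$-component is circular: writing $(u-\bar u)_x=(v-\bar v)_s-\sum_i\dot{\bold X}_i(\widetilde v_i)_x$ and integrating by parts in time in $-4\int W(v-\bar v)(u-\bar u)_x$ merely reproduces the identity you began with, so no net ``$+\mathcal L_v$'' is extracted. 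Second, and more seriously, after the time-IBP and substitution of \eqref{eq:DNSF} into $-4\int_0^t\!\int W\vec W\cdot\vec W_s$, the subsequent IBP in $x$ on the flux terms generates cross terms such as $\int_0^t\!\int W(v-\bar v)_x(u-\bar u)\,dx\,ds$ and $\int_0^t\!\int W(u-\bar u)_x(p-\bar p)\,dx\,ds$ whose integrand carries the bounded, \emph{non-time-decaying} weight $W\le1$ rather than $W_x$. These cannot be absorbed into $\tfrac14\mathcal L$ (the factor $(1+s)^{-1}$ is missing), and any Young inequality applied to them produces terms like $\int_0^t\|u-\bar u\|_{L^2}^2\,ds$ or $\int_0^t\|\theta-\bar\theta\|_{L^2}^2\,ds$ that grow linearly in $t$, while the right-hand side of \eqref{est_spacetimeCD} is uniform in $t$. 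The $(1+s)^{-1/2}$ decay you invoke comes only from $|W_x|$ and is unavailable for these terms; and the coefficients multiplying $\int W(v-\bar v)_x(u-\bar u)$ from the continuity and momentum equations do not cancel (their sum is proportional to $1-\bar p/\bar v\ne0$).

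The paper does not give its own proof here; it cites \cite{KVW-NSF}, where (following \cite{HXY08,HLM10}) the argument is built around the bounded antiderivative $G(t,x):=(1+t)^{-1/2}\int_{-\infty}^x e^{-C_1 y^2/(1+t)}\,dy$, for which $G_x^2$ is exactly the weight in $\mathcal L$, $G_{xx}=4C_1 G_t$, and the key inequality $\mathcal L_h\le C\sup\|h\|^2+C\int_0^t\|h_x\|^2\,ds+C\int_0^t|\langle h_t,hG^2\rangle|\,ds$ follows from $\tfrac{d}{dt}\int h^2G^2\,dx$. Your identity is the pointwise shadow of the heat equation satisfied by $G_x$, but without the bounded antiderivative $G$ — so that the problematic pairings occur with $G^2$ (bounded) and $(G^2)_x=2GG_x$ (carrying the needed Gaussian and $(1+s)^{-1/2}$ decay), together with the cancellation structure of the flux exploited in \cite{KVW-NSF} — the direct time-IBP on $W|\vec W|^2$ does not close.
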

\section{Higher Order Estimates} \label{higher order}

\begin{lemma} 
Assume the conditions of Proposition \ref{prop:main} hold. Then there exists a constant $C > 0$, independent of $\delta_0$, $\varepsilon$, and $T$, such that for every $t \in (0, T]$,
		\begin{align*}
		\begin{aligned}
			& \  \|(v-\bar{v}, u-\bar{u}, \theta- \bar{\theta})(t, \cdot)\|^2_{H^1(\mathbb{R})}  + \int_{0}^{t}  \left(\sum_{i\in\{1,3\}} \delta_i|\dot{\bold{X}_i}|^{2}    + \sum_{i\in\{1,3\}}\mathcal{G}_i^{S}\right)   \\
			&  +\int_{0}^{t}  \|(v-\bar{v}, u-\bar{u}, \theta-\bar{\theta})_x\|^2_{L^2(\mathbb{R})} ds +  \int_{0}^{t} \|(u-\bar{u}, \theta-\bar{\theta})_{xx}\|^2_{L^2(\mathbb{R})}  ds    \\
			\leq &   C\|(v-\bar{v}, u-\bar{u}, \theta- \bar{\theta})(0, \cdot)\|_{H^1(\mathbb{R})}+ C\delta_0^{\frac{1}{2}}.
		\end{aligned}
	\end{align*}
\end{lemma}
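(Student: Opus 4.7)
The plan is to upgrade the zero-order estimate of Lemma \ref{lm_zeroorder} by establishing $H^1$-control on $v-\bar v$ and $H^2$-control on $u-\bar u$, $\theta-\bar\theta$. Inspecting the conclusion of Lemma \ref{lm_zeroorder}, the only obstruction to closing the argument at the $L^2$ level is the term $C\delta_0\int_0^t\|(v-\bar v)_x\|_{L^2}^2\,ds$. Hence the entire higher-order estimate reduces to producing a dissipation mechanism for $\|(v-\bar v)_x\|_{L^2}^2$ (which is not available from the $v$-equation directly because no viscosity acts on $v$) together with routine $H^1/H^2$ parabolic estimates for the perturbations of $u$ and $\theta$, and then summing everything with constants chosen small.

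First, I would derive the estimate on $\|(v-\bar v)_x(t)\|_{L^2}^2$ by exploiting the Lagrangian identity $\mu u_x/v=\mu(\ln v)_t$, which allows the momentum equation in \eqref{eq:DNSF} to be rewritten as
\[
\bigl[(u-\bar u)-\mu\bigl((\ln v)_x-(\ln\bar v)_x\bigr)\bigr]_t+(p-\bar p)_x=\dot{\bX}_1(\tu_1)_x+\dot{\bX}_3(\tu_3)_x-Q_1+\mathcal E,
\]
where $\mathcal E$ collects the fact that $\bar U$ is not an exact solution. Testing this against $(v-\bar v)$ and using the continuity equation $(v-\bar v)_t=(u-\bar u)_x+\dot{\bX}_i(\widetilde v_i)_x$ to integrate by parts in time, together with the decomposition $p-\bar p=-\frac{\bar p}{v}(v-\bar v)+\frac{R}{v}(\theta-\bar\theta)$, extracts a coercive term $\int\frac{\mu\bar p}{v^2\bar v}|(v-\bar v)_x|^2\,dx$, modulo lower-order terms controlled by $\|(\theta-\bar\theta)_x\|_{L^2}^2$, $\delta_i|\dot{\bX}_i|^2$, the good terms $\mathcal{G}_i^{S}$, the interaction bounds of Lemma \ref{lm:interaction}, and the decaying contact-wave error \eqref{est_Q12}.

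Next, I would perform standard higher-order energy estimates: multiplying the second equation of \eqref{eq:DNSF} by $-(u-\bar u)_{xx}$ and integrating in $x$ yields $\frac{1}{2}\frac{d}{dt}\|(u-\bar u)_x\|_{L^2}^2+c\|(u-\bar u)_{xx}\|_{L^2}^2$ plus error terms; the pressure term $(p-\bar p)_x(u-\bar u)_{xx}$ produces exactly a controlled multiple of $\|(v-\bar v)_x\|_{L^2}^2+\|(\theta-\bar\theta)_x\|_{L^2}^2$ after using the expansion of $p-\bar p$ and Young's inequality, the shift terms $\dot{\bX}_i(\widetilde u_i)_x(u-\bar u)_{xx}$ are absorbed by the dissipation via \eqref{est:shock_prop} and the already-controlled $|\dot{\bX}_i|^2$, the commutator coming from $u_x/v-\bar u_x/\bar v$ is handled by the a priori smallness \eqref{apriori_small} and the Sobolev embedding $H^1\hookrightarrow L^\infty$, and the forcing $Q_1$ is dealt with by Lemma \ref{lm:interaction} and \eqref{est_Q12}. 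An entirely parallel computation with the third equation tested against $-(\theta-\bar\theta)_{xx}$ yields the dissipation $\|(\theta-\bar\theta)_{xx}\|_{L^2}^2$; the quadratic viscous difference $\mu(u_x^2/v-\bar u_x^2/\bar v)$ is bounded via $\|(u-\bar u)_x\|_{L^\infty}\le C\|(u-\bar u)_x\|_{L^2}^{1/2}\|(u-\bar u)_{xx}\|_{L^2}^{1/2}$ and absorbed after Young.

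Summing the three differential inequalities with suitable positive weights, integrating on $[0,t]$, and invoking Lemma \ref{lm_zeroorder}, all cross terms are absorbed once $\delta_0$ and $\varepsilon$ are small; in particular the problematic $C\delta_0\int_0^t\|(v-\bar v)_x\|_{L^2}^2\,ds$ on the right-hand side of Lemma \ref{lm_zeroorder} is absorbed by the dissipation produced in Step 1, and all time-decaying factors from Lemma \ref{lm:interaction} and \eqref{est_Q12} integrate to a uniform $C\delta_0$. The main obstacle is the $v_x$ estimate: the absence of regularization in the $v$-equation forces one to extract the needed dissipation through the effective-velocity structure of the momentum equation, and to keep careful track of the six interaction sources (two shocks with their shifts, one viscous contact wave, and the composite interaction errors $Q^I_i$, $Q^C_i$) so that no term of the form $\|(v-\bar v)_x\|_{L^2}^2$ survives with a constant larger than the coercivity produced in Step 1.
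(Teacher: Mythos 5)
Your plan for $\psi=u-\bar u$ and $\vartheta=\theta-\bar\theta$ is correct and matches the paper: multiply the momentum and energy perturbation equations by $\psi_{xx}$ (resp.\ $\vartheta_{xx}$), extract $\int\frac{\mu}{v}|\psi_{xx}|^2$ and $\int\frac{\kappa}{v}|\vartheta_{xx}|^2$, and absorb the shift terms, pressure commutators and interaction forcings exactly as you describe. The step that has a genuine gap is the $\|(v-\bar v)_x\|_{L^2}^2$ estimate. With the effective-velocity (BD) reformulation you invoke, the choice of multiplier is what produces the coercivity, and $(v-\bar v)$ is the wrong one. If you test $\bigl[(u-\bar u)-\mu((\ln v)_x-(\ln\bar v)_x)\bigr]_t+(p-\bar p)_x=\cdots$ against $(v-\bar v)=\phi$ as you write, the pressure contribution after one spatial integration by parts is
\[
\int(p-\bar p)_x\phi\,dx=-\int(p-\bar p)\phi_x\,dx
\approx \int\frac{\bar p}{v}\,\phi\,\phi_x\,dx-\int\frac{R}{v}\,\vartheta\,\phi_x\,dx,
\]
and the first piece is $\tfrac12\int\frac{\bar p}{v}(\phi^2)_x\,dx$, a lower-order zeroth-order term after another integration by parts; the second is a cross term. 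Neither yields $\int|\phi_x|^2$. The middle term $-\mu\int\bigl((\ln v)_x-(\ln\bar v)_x\bigr)_t\phi\,dx$ likewise produces only $\int\psi_x\phi_x$-type cross terms and a total time derivative of $\int\zeta\phi$. The coefficient $\frac{\mu\bar p}{v^2\bar v}$ you announce in fact betrays that the intended multiplier must carry one more derivative: testing instead against $\mu\bigl((\ln v)_x-(\ln\bar v)_x\bigr)$ (equivalently, against $(v-\bar v)_x$ up to lower-order pieces) does produce a coercive $\int\frac{\mu R\theta}{v^3}|\phi_x|^2\,dx$ from the $-\frac{R\theta}{v^2}\phi_x$ part of $(p-\bar p)_x$, and a time derivative of $\frac{\mu^2}{2}\int\zeta^2$. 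The paper reaches the same dissipation by a more elementary route that avoids the effective-velocity identity entirely: it differentiates the continuity perturbation equation \eqref{eq:differencev} in $x$, multiplies by $\mu\phi_x$, multiplies the momentum perturbation equation \eqref{eq:differenceu} by $-v\phi_x$, and adds; the term $v\phi_x(p-\bar p)_x$ then delivers $-\frac{R\theta}{v}\phi_x^2$ directly (no $\mu$-prefactor), while the Laplacian contributions cancel against the differentiated continuity equation. Either version works once the multiplier is fixed, but with $(v-\bar v)$ it does not close; and whichever you use, be careful to keep the shift contributions $\dot{\bX}_i(\widetilde v_i)_{xx}$, $\dot{\bX}_i(\widetilde u_i)_x$ explicitly in the error budget (they give rise to the terms $L_3$, $L_4$ in the paper's proof and are absorbed by $\delta_i|\dot{\bX}_i|^2$ rather than by the dissipation).
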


\begin{proof}
The detailed proof is omitted here, as it follows closely the one presented in \cite[Lemma 5.1]{KVW-NSF}.
 \end{proof}

\bibliographystyle{plain}
\bibliography{ref} 
\end{document}